\def\red{\color{red}}
\def\rr{{\mathbb R}}
\def\rn{{\mathbb{R}^n}}
\def\zz{{\mathbb Z}}
\def\cc{{\mathbb C}}
\def\nn{{\mathbb N}}
\def\cf{{\mathcal F}}
\def\fz{\infty }
\def\az{\alpha}
\def\bz{\beta}
\def\ez{\epsilon}
\def\gz{{\gamma}}
\def\lz{\lambda}
\def\lf{\left}
\def\r{\right}
\def\ls{\lesssim}
\def\noz{\nonumber}
\def\wz{\widetilde}
\def\loc{{\mathop\mathrm{\,loc\,}}}
\def\supp{\mathop\mathrm{\,supp\,}}
\def\XXint#1#2#3{{\setbox0=\hbox{$#1{#2#3}{\int}$ }
\vcenter{\hbox{$#2#3$ }}\kern-.6\wd0}}
\newtheorem{theorem}{Theorem}[section]
\newtheorem{lemma}[theorem]{Lemma}
\newtheorem{corollary}[theorem]{Corollary}
\newtheorem{proposition}[theorem]{Proposition}
\theoremstyle{definition}
\newtheorem{remark}[theorem]{Remark}
\renewcommand{\appendix}{\par
   \setcounter{section}{0}%
   \setcounter{subsection}{0}%
   \setcounter{subsubsection}{0}%
   \gdef\thesection{\@Alph\c@section}%
   \gdef\thesubsection{\@Alph\c@section.\@arabic\c@subsection}%
   \gdef\theHsection{\@Alph\c@section.}%
   \gdef\theHsubsection{\@Alph\c@section.\@arabic\c@subsection}%
   \csname appendixmore\endcsname
 }
\numberwithin{equation}{section}
\begin{document}

\arraycolsep=1pt

\title{\bf  Functional Calculus  on BMO-type Spaces of Bourgain,  Brezis and  Mironescu
\footnotetext{\hspace{-0.35cm} 2010 {\it
Mathematics Subject Classification}. Primary 46E35;
Secondary 42B35, 46E30.
\endgraf {\it Key words and phrases.}  superposition operator, ${\rm B}$ space, $\mathop\mathrm{BMO}\,$ space, ${\rm VMO}$ space.
\endgraf This project is supported by the National
Natural Science Foundation of China
(Grant Nos.  11771446, 11761131002, 11571039, 11726621 and  11471042).}}
\author{Liguang Liu,  Dachun Yang and Wen Yuan\footnote{Corresponding author/{\red August 13, 2018}/Final version.}}
\date{}
\maketitle

\vspace{-0.8cm}

\begin{center}
\begin{minipage}{13cm}
{\small {\bf Abstract}\quad
A nonlinear superposition operator $T_g$ related to
a Borel measurable function $g:\ {\mathbb C}\to {\mathbb C}$ is defined via $T_g(f):=g\circ f$ for
any complex-valued function $f$ on ${\mathbb R^n}$.
This article is devoted to investigating the mapping properties of $T_g$ on a new
BMO type space recently introduced by
Bourgain,  Brezis and  Mironescu [J. Eur. Math. Soc. (JEMS) 17 (2015),  2083-2101],
as well as its VMO and CMO type subspaces.
Some sufficient and necessary conditions for the inclusion  and the
continuity properties of $T_g$ on these spaces are obtained.
}
\end{minipage}
\end{center}


\section{Introduction}\label{s1}

Recently, Bourgain,  Brezis and  Mironescu \cite{bbm} introduced a new BMO type space $\mathrm{B}$ on the unit cube, which is large enough to
contain  the  BMO space, the space BV of functions of bounded variation and the Sobolev  space $W^{1/p,p}$ with
$p\in(1,\fz)$ as its subspaces, and meanwhile it is also small enough to ensure that any integer-valued element belonging to its VMO type subspace $\mathrm{B}_0$ is necessarily constant. Notice that the implication
 property $$``f\in X\ \mbox{being integer-valued}\Longrightarrow f=\mbox{constant almost everywhere}"$$
is already known to be true if $X$ is the VMO space or the Sobolev space $W^{1/p,p}$ with
$p\in[1,\fz)$, which are both subspaces of $\mathrm{B}_0$.
In \cite{abbf}, Ambrosio,  Bourgain,  Brezis and Figalli further found an interesting connection between the $\mathrm{B}$ space
and the notion of perimeter of sets. Indeed, via a global version of the norm of the
new BMO type space $\mathrm{B}$, they found a new characterization of perimeter of sets independent of the theory of distributions.

In view of these remarkable applications of the space $\mathrm{B}$ in analysis and geometry, it would be interesting to
explore more properties or characterizations of it. The main aim of this article is to clarify the mapping properties
of the nonlinear superposition operator on the space $\mathrm{B}$
as well as the corresponding VMO and CMO type subspaces. Recall that a \emph{superposition operator $T_g$} (also called Nemytskij
operator) related to
a Borel measurable function $g:\ \cc\to \cc$ is  given by
\begin{equation}\label{superp}
T_g(f):=g\circ f\qquad \mbox{for any complex-valued function}\ f.
\end{equation}
This nonlinear operator $T_g$ appears frequently in various branches of mathematics, which
plays a crucial role in nonlinear analysis and can be applied to ordinary or partial
differential equations, physics and engineering;
see, for example, \cite{be09,bu10,bu12,pkr} for some of its recent applications.

The study
of the behavior of the superposition operator $T_g$ on
function spaces has a long history.
Early works are on the behavior of $T_g$ on Sobolev spaces, which was due to Marcus and Mizel \cite{mm72,mm79,mm79b}.
In  \cite{az}, Appell and Zabrejko studied
the action of $T_g$ on Lebesgue, Orlicz and H\"older spaces.
During the last three decades,
important progresses have been made on the study of
superposition operators on function spaces with fractional-order of smoothness (such as Sobolev spaces, H\"older-Zygmund spaces, Besov  and Triebel-Lizorkin spaces), mainly due to Bourdaud
and Sickel et al.
For instance, we refer the reader to \cite{b91,bm91,b92a,bk92,rs96,s96,s97} for Sobolev spaces, to  \cite{s89,b92,b93,bk95,rs96,s98a,s98b,bms08,bs10,bs11,bms14}
for Besov and Triebel-Lizorkin spaces, to \cite{bl02} for H\"older-Zygmund spaces, to \cite{bls05,bls06}
for spaces of functions of
bounded $p$-variation  and
to \cite{f85,bn,c,b02, bls} for classical BMO-type spaces. More historical information was given in \cite{az10}.
Of particular importance to us is the article \cite{bls} of Bourdaud,  Lanza de Cristoforis
and  Sickel, which provides a nearly complete picture on the mapping properties of
superposition operators on BMO and its subspaces VMO  and CMO on $\rn$.
Based on these, it is natural to study the behavior of $T_g$
on the aforementioned new BMO type space $\mathrm{B}$
introduced in \cite{bbm,abbf} as well as its VMO and CMO type subspaces.

To state the main results of this article, we begin with some basic notation and notions.
For any $r\in(0,\infty)$ and $a\in\rn$, let
$Q_r(a):=Q(a,r)$ be the open cube centered at $a$ with its sides parallel
 to the coordinate axes and of side length $r$.
 Such an open cube with side length $r$ is called an \emph{$r$-cube}.
Given a cube $Q\subset\rn$ and a complex-valued  locally integrable function $f$ defined on $\rn$,
we let
$$M(f,Q):=\fint_Q|f(x)-f_Q|\,dx,$$
where
\begin{align*}
\fint_Q:=\frac1{|Q|}\int_Q\qquad \textup{and}\qquad f_Q:=\fint_Q f(x)\,dx.
\end{align*}
Let $Q_0:=(0,1)^n$ be the unit open cube of $\rn$.
Denote by $L^1(Q_0)$ the set of all complex-valued measurable functions $f$ on $\rn$ such that
$\int_{Q_0}|f(x)|\,dx<\infty$.
For any $f\in L^1(Q_0)$ and $\ez\in(0,1)$, let
$$[f]_{\ez,Q_0}:=\sup_{\cf_\ez}\lf\{\ez^{n-1}\sum_{j\in J} M(f,Q_\ez(a_j))\r\},$$
where the supremum is taken over all collections $\cf_\ez:=\{Q_\ez(a_j)\}_{j\in J}$ of mutually disjoint
$\ez$-cubes in $Q_0$ with sides parallel to the coordinate axes of $\rn$ satisfying  $\#\cf_\ez=\#J\le 1/\ez^{n-1}$.
Here and hereafter, for any set $E$, we use $\#E$ to denote
its \emph{cardinality}. The BMO type space $\mathrm{B}(Q_0)$ is  defined as the collection of all $f\in L^1(Q_0)$
such that
$$\|f\|_{\mathrm{B}(Q_0)}:=\int_{Q_0}|f(x)|\,dx+\sup_{0<\ez<1}[f]_{\ez,Q_0}<\infty.$$
We point out that this BMO type space $\mathrm{B}(Q_0)$,  denoted originally by $\mathrm{B}$ in \cite{bbm},
was equipped with the  seminorm $\|f\|_{\mathrm{B}}:= \sup_{0<\ez<1}[f]_{\ez,Q_0}$ therein, which makes $\mathrm{B}$ into a Banach space modulo the space of constant functions.
Since the operator $T_g$ is not defined on the quotient space, we use the norm $\|\cdot\|_{\mathrm{B}(Q_0)}$ instead of $\|\cdot\|_{\mathrm{B}}$
throughout this article.

Recall that the classical space  $\mathop\mathrm{BMO}\,(Q_0)$ is defined to be the set of all complex-valued
locally integrable functions on $Q_0$ such that
$$\|f\|_{\mathop\mathrm{BMO}\,(Q_0)}:=\sup_{Q\subset Q_0}M(f,Q)<\fz,$$
where the supremum is taken over all cubes $Q$ in $Q_0$.
Based on \cite[p.\,2084]{bbm}, one has $\mathop\mathrm{BMO}\,(Q_0)=\mathrm{B}(Q_0)$ when $n=1$
 and $\mathop\mathrm{BMO}\,(Q_0)\subsetneqq\mathrm{B}(Q_0)$ when $n>1$.

For any two vector spaces $X$ and $Y$, the symbol $X\subset Y$ only means that $X$ is a subset of $Y$, and $X\hookrightarrow Y$ means
that not only $X\subset Y$ but also the embedding from $X$ into $Y$ is continuous.
If we let $\mathrm{B}_c(Q_0)$ be the closure of  $C_c^\fz(Q_0)$  in $\mathrm{B}(Q_0)$,
and $\mathrm{B}_0(Q_0)$ the set of all  $f\in \mathrm{B}(Q_0)$ such that
$$\limsup_{\ez\to 0}[f]_{\ez,Q_0}=0,$$
then it is easy to show that
$\mathrm{B}_c(Q_0)\hookrightarrow \mathrm{B}_0(Q_0)$.
It is also easy to see that $$\mathop\mathrm{VMO}\,(Q_0):=\lf\{f\in \mathop\mathrm{BMO}\,(Q_0):\ \lim_{\ez\to 0}\sup_{{Q\subset Q_0, \ell(Q)\le \ez}}M(f,Q)=0\r\}\subset \mathrm{B}_0(Q_0)$$
and
$$\mathop\mathrm{CMO}\,(Q_0):=
\lf\{\textup{the closure of}\; C_c^\fz(Q_0) \;\textup{in}\;
\mathop\mathrm{BMO}\,(Q_0)\r\}\subset
 \mathrm{B}_c(Q_0),$$
 where  $\ell(Q)$ always denotes the side length of a cube $Q$.

An analogous  global version of $B(Q_0)$ can be defined as follows.
Given a complex-valued locally integrable function $f$ on $\rn$ and $\ez\in(0,1)$, define
$$[f]_{\ez}:=\sup_{\cf_\ez}\lf\{\ez^{n-1}\sum_{j\in J} M(f,Q_\ez(a_j))\r\},$$
where the supremum is now taken over all collections $\cf_\ez:=\{Q_\ez(a_j)\}_{j\in J}$ of mutually disjoint
$\ez$-cubes  in $\rn$ with sides parallel to the coordinate axes and $\#\cf_\ez=\#J\le 1/\ez^{n-1}$.
Denote by $L^1_\loc(\rn)$ the set of all locally integrable functions on $\rn$ and $\mathrm{B}(\rn)$
the space of all complex-valued functions $f\in L^1_\loc(\rn)$
such that
$$\|f\|_{\mathrm{B}(\rn)}:=\sup_{|Q|=1}\int_{Q}|f(x)|\,dx+\sup_{0<\ez<1}[f]_\ez<\infty,$$
where the first supremum is taken over all $1$-cubes $Q$ in $\rn$ with sides parallel to the coordinate axes.
By this definition, it is easy to see that   $\mathrm{B}(\rn)$ is translation invariant.

Here, it should be mentioned that  the limit when $\ez\to 0$ of an isotropic
variant $I_\ez(f)$ of  $[f]_\ez$, defined via removing the restriction ``sides parallel
to the coordinate axes" from the definition of $[f]_\ez$, was used in \cite{abbf}
to give a new characterization of the perimeter of sets,
independent of the theory of distributions.  More precisely, it was proved in \cite[Theorem 1]{abbf} that, for any measurable set $A\subset \rn$,
it holds true that $\lim_{\ez\to0}I_\ez(\mathbf{1}_A)=\frac12\min\{1,P(A)\}$, where $\mathbf{1}_A$ denotes the characteristic function on $A$
and $P(A)$ the perimeter of $A$.

Let us clarify some obvious relations among $\mathrm{B}(\rn)$ and the classical BMO type spaces on $\rn$.
Recall that
$$\mathop\mathrm{bmo}\,(\rn)
:=\lf\{
f\in \mathop\mathrm{BMO}\,(\rn):\, \|f\|_{\mathop\mathrm{bmo}\,(\rn)}:=\|f\|_{\mathop\mathrm{BMO}\,(\rn)}+\sup_{|Q|=1}\int_Q|f(x)|\,dx<\fz
\r\}.$$
Let
$
\mathop\mathrm{cmo}\,(\rn)
$
be
the closure of $C_c^\fz(\rn)$ of smooth functions with compact
supports in
$\mathop\mathrm{bmo}\,(\rn)$
and
$$\mathop\mathrm{vmo}\,(\rn):=\lf\{f\in \mathop\mathrm{bmo}\,(\rn):\ \lim_{\ez\to 0}\sup_{{\ell(Q)\le \ez}}M(f,Q)=0\r\}.$$
Analogously, define
$$
\mathrm{B}_c(\rn)
:= \lf\{
\textup{the closure of}\; C_c^\fz(\rn)\;\textup{in}\;
 \mathrm{B}(\rn)\r\}
\quad
\textup{and}
\quad
\mathrm{B}_0(\rn)
:= \lf\{f\in \mathrm{B}(\rn):\, \limsup_{\ez\to 0}[f]_\ez=0\r\}.
$$
From these definitions, we deduce that
$\mathop\mathrm{bmo}\,(\rn)\subset \mathrm{B}(\rn)$, $\mathop\mathrm{vmo}\,(\rn)\subset \mathrm{B}_0(\rn)$ and
$\mathop\mathrm{cmo}\,(\rn)\subset \mathrm{B}_c(\rn).$
It is also easy to see that $\mathop\mathrm{bmo}\,(\rr)=\mathrm{B}(\rr)$, $\mathop\mathrm{vmo}\,(\rr)= \mathrm{B}_0(\rr)$ and
$\mathop\mathrm{cmo}\,(\rr)= \mathrm{B}_c(\rr).$

The first result of this article reads as follows.

\begin{theorem}\label{t-bmo}
The following five statements are equivalent:
\begin{enumerate}
\item[{\rm(i)}] $\sup_{x,y\in \cc}(1+|x-y|)^{-1}|g(x)-g(y)|<\fz$;
\item[{\rm(ii)}]  $T_g(\mathrm{B}(\rn)) \subset \mathrm{B}(\rn)$;
\item[{\rm(iii)}] $T_g(\mathrm{B}_c(\rn)) \subset \mathrm{B}(\rn)$;
\item[{\rm(iv)}]  $T_g(\mathrm{B}(Q_0)) \subset \mathrm{B}(Q_0)$;
\item[{\rm(v)}] $T_g(\mathrm{B}_c(Q_0)) \subset \mathrm{B}(Q_0)$.
\end{enumerate}
Moreover, if any of the above holds true, then $T_g$ maps bounded subsets of $\mathrm{B}(\rn)$ (resp., $\mathrm{B}(Q_0)$)
to bounded subsets of $\mathrm{B}(\rn)$  (resp., $\mathrm{B}(Q_0)$).
\end{theorem}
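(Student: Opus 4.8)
The plan is to prove $(i)\Rightarrow(ii)$ and $(i)\Rightarrow(iv)$ by a direct estimate that also yields the boundedness assertion, to observe that $(ii)\Rightarrow(iii)$ and $(iv)\Rightarrow(v)$ are trivial from $\mathrm{B}_c(\rn)\subset\mathrm{B}(\rn)$ and $\mathrm{B}_c(Q_0)\subset\mathrm{B}(Q_0)$, and to close both circles by proving $(iii)\Rightarrow(i)$ and $(v)\Rightarrow(i)$ through a single explicit counterexample. So the genuine content lies in the sufficiency estimate and in the two (essentially identical) necessity constructions.

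For the sufficiency, set $L:=\sup_{x\ne y}(1+|x-y|)^{-1}|g(x)-g(y)|<\fz$. For any cube $Q$ and any $f\in L^1_{\loc}$, Jensen's inequality together with $|g(f(x))-g(f(y))|\le L(1+|f(x)-f(y)|)$ gives
$$M(T_g(f),Q)\le\fint_Q\fint_Q|g(f(x))-g(f(y))|\,dy\,dx\le L\lf(1+2M(f,Q)\r).$$
Multiplying by $\ez^{n-1}$, summing over any admissible family $\cf_\ez=\{Q_\ez(a_j)\}_{j\in J}$ with $\#J\le\ez^{1-n}$, and using $\ez^{n-1}\#J\le 1$, yields $[T_g(f)]_\ez\le L+2L[f]_\ez$ (verbatim whether the cubes lie in $Q_0$ or in $\rn$). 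Since moreover $|g(f(x))|\le|g(0)|+L+L|f(x)|$, one gets $\|T_g(f)\|_{\mathrm{B}(\rn)}\le C(1+\|f\|_{\mathrm{B}(\rn)})$, and likewise on $Q_0$, with $C$ depending only on $L$ and $|g(0)|$; this proves $(i)\Rightarrow(ii)$, $(i)\Rightarrow(iv)$, and the ``moreover'' part.

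For the necessity, I would first record that $(i)$ holds if and only if $\omega:=\sup_{|x-y|\le1}|g(x)-g(y)|<\fz$ (one direction is trivial, the other follows by chaining along $\lceil|x-y|\rceil$ unit steps). Assume $\omega=\fz$ and pick $x_k,y_k\in\cc$ with $|x_k-y_k|\le1$ and $L_k:=|g(x_k)-g(y_k)|\to\fz$. The aim is one function $f=\sum_kf_k\in\mathrm{B}_c(\rn)$, with all $f_k$ supported in pairwise disjoint cubes $P_k\subset Q_0$ of side $R_k$, each $f_k$ vanishing near $\partial P_k$ and equal on the bulk of $P_k$ to
$$\tfrac12(x_k+y_k)+\tfrac12(x_k-y_k)\,\psi\lf(x/\ez_k\r),\qquad \ez_k:=R_k^{\,n},$$
where $\psi$ is a fixed smooth $1$-periodic profile with values in $[-1,1]$ taking each of the values $1$ and $-1$ on a portion of density at least $\tfrac13$ of every period. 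Then on every $\ez_k$-cube inside the bulk of $P_k$, $f$ attains each of $x_k$ and $y_k$ on a subset of measure at least $\tfrac13$ of that cube, so $M(T_g(f),Q_{\ez_k})\ge\tfrac13 L_k$ \emph{no matter how $g$ behaves on intermediate values}; since there are of order $(R_k/\ez_k)^n=\ez_k^{1-n}$ such cubes, selecting $\cf_{\ez_k}$ to consist of $\sim\tfrac12\ez_k^{1-n}$ of them gives $[T_g(f)]_{\ez_k}\gtrsim\ez_k^{n-1}\cdot\ez_k^{1-n}\cdot L_k\gtrsim L_k\to\fz$, hence $T_g(f)\notin\mathrm{B}(\rn)$ and its restriction is not in $\mathrm{B}(Q_0)$ either. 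Choosing $R_k$ to decrease rapidly with $|x_k+y_k|R_k^{\,n-1}\le1$ and $\sum_k|x_k+y_k|R_k^{\,n}<\fz$, a scale-by-scale estimate shows $\sup_{0<\ez<1}[f]_\ez<\fz$ and $\|\sum_{k>m}f_k\|_{\mathrm{B}(\rn)}\to0$, so $f$ is the $\mathrm{B}(\rn)$-limit of the partial sums $\sum_{k\le m}f_k\in C_c^\fz(Q_0)$, i.e. $f\in\mathrm{B}_c(\rn)\cap\mathrm{B}_c(Q_0)$; this establishes $(iii)\Rightarrow(i)$ and $(v)\Rightarrow(i)$.

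The main obstacle is exactly this construction, where three requirements compete: $f$ must lie in the \emph{closure} of $C_c^\fz$ (ruling out jump-type building blocks and forcing true $\mathrm{B}$-convergence of the smooth approximants); $g$ may grow superlinearly, so $f$ must attain the large values $x_k,y_k$ while $\|f\|_{\mathrm{B}}$ stays bounded --- which is why the oscillation is spread over a \emph{wide plateau} of $\sim\ez_k^{1-n}$ fine cubes, paying the amplitude cost $\sim|x_k+y_k|R_k^{\,n-1}$ once but harvesting the gain $L_k$ on each fine cube; and $g$ is only Borel, hence uncontrollable on the transition regions between $x_k$ and $y_k$ --- which is harmless precisely because the bound $M(T_g(f),Q_{\ez_k})\ge\tfrac13 L_k$ uses only that $f$ takes the two prescribed values on fixed-proportion subsets. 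Getting the bookkeeping of $R_k$, $\ez_k$ and the amplitudes to satisfy all three at once is the delicate point; the sufficiency direction and the trivial implications are routine by comparison.
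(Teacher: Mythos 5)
Your sufficiency argument, the boundedness assertion, and the trivial implications are fine (indeed more direct than the paper, which splits $g$ into a bounded plus a Lipschitz part via its Lemma \ref{eqthm1}). The necessity half, however, has genuine gaps. First, you only require $|x_k-y_k|\le 1$, while your bookkeeping controls the plateau amplitude $|x_k+y_k|$ alone. The oscillating part of $f_k$ has amplitude $\tfrac12|x_k-y_k|$, and the very mechanism you use for the lower bound on $T_g(f)$ applies to $f$ itself: the $\ez_k$-cubes filling the bulk of $P_k$ saturate the cardinality budget and give $[\sum_{k>m}f_k]_{\ez_k}\ge\tfrac16|x_k-y_k|$ for every $k>m$. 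So unless $|x_k-y_k|\to0$, the tails do not converge to $0$ in $\mathrm{B}(\rn)$, and in fact $\limsup_{\ez\to0}[f]_\ez>0$, i.e.\ $f\notin\mathrm{B}_0(\rn)\supset\mathrm{B}_c(\rn)$: your $f$ is not an admissible test function for (iii) or (v). This is repairable (if (i) fails, your chaining remark applied at scale $\delta$ shows $\sup_{|x-y|\le\delta}|g(x)-g(y)|=\fz$ for every $\delta>0$, so one may take $|x_k-y_k|\le 2^{-k}$), but the step is missing and the tail claim as written is false. Moreover the asserted scale-by-scale estimate is not routine and your displayed conditions do not imply it: with $|x_k+y_k|R_k^{n-1}\equiv1$ and, say, $R_k=2^{-k}$, placing about $(R_k/\ez)^n$ cubes in the transition annulus of each $P_k$ with $R_k\in[\ez,\ez^{1/n}]$ yields $[f]_\ez\gs\log(1/\ez)$, so $f\notin\mathrm{B}(\rn)$; a summability-type condition on $|x_k+y_k|R_k^{n-1}$ (and its verification) is needed.

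More seriously, the construction breaks down when $n=1$, a case the theorem covers. There $R_k^{n-1}=1$ and the allowed families consist of a single cube, so a compactly supported plateau at level $\tfrac12(x_k+y_k)$ costs $\sim|x_k+y_k|$ in the $\mathrm{B}$-seminorm no matter how small $R_k$ is (an interval straddling the transition already sees mean oscillation of that order). For $g(z)=|z|^2$, (i) fails but any pairs with $|x_k-y_k|\le1$ and $|g(x_k)-g(y_k)|\to\fz$ must have $|x_k|+|y_k|\to\fz$, so no choice of $R_k$ keeps $\|f_k\|_{\mathrm{B}}$ small and your series cannot stay in $\mathrm{B}$, let alone in $\mathrm{B}_c$. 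This is exactly where the paper does something different: it reaches a large value $a$ cheaply via the logarithmic bump of Lemma \ref{bmof}, with $|a|\,\|\theta\|_{\mathrm{BMO}(\rn)}\ls |a|/\log_2 j$ and $j$ chosen after $a$, and it replaces the single infinite-sum counterexample by a localization argument (Lemma \ref{lem1}, resting on the pointwise-multiplier Proposition \ref{p-pm}, hence on Lemma \ref{part} and Proposition \ref{prop-new}) followed by the test-function Lemma \ref{l-he}. Your wide-plateau idea may be salvageable for $n\ge2$ once $|x_k-y_k|\to0$ and summable plateau costs are imposed and checked, but as written the implications (iii)$\Rightarrow$(i) and (v)$\Rightarrow$(i) are not established, and for $n=1$ the approach itself fails.
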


Comparing with \cite[Theorem 1]{bls}, we find that the condition on $g$
which ensures the inclusion $T_{g}(\mathrm{B}(\rn))\subset \mathrm{B}(\rn)$ here is the same as that for
$T_{g}(\mathop\mathrm{BMO}\,(\rn))\subset \mathop\mathrm{BMO}\,(\rn)$ and
$T_{g}(\mathop\mathrm{bmo}\,(\rn))\subset \mathop\mathrm{bmo}\,(\rn)$.

The proof of the implication  (ii)$\Longrightarrow$(i) in Theorem \ref{t-bmo}
can be reduced to the case $n=1$, and
then one can  employ  the fact  $\mathrm{B}(\rr)=\mathop\mathrm{bmo}\,(\rr)$ and the known
result for $\mathop\mathrm{bmo}\,(\rr)$ obtained in \cite[Theorem 1]{bls}.
Being precisely, for any $f_0\in \mathop\mathrm{bmo}\,(\rr)=\mathrm{B}(\rr)$, define
\begin{equation}\label{extend}
f(x_1,\ldots,x_n):=f_0(x_1),\qquad \forall\; x:=(x_1,\ldots,x_n)\in\rn.
\end{equation}
Then $f_Q=(f_0)_{Q^{1}}$ for any cube $Q$ in $\rn$, where $Q^1$ is the projection interval of $Q$ onto the $x_1$-axis.
Notice that  the assumption (ii)
gives  $T_g f=g\circ f \in \mathrm{B}(\rn)$.
Since $g\circ f_0(x_1)=g\circ f(x_1,\ldots,x_n)$ for any $x:=(x_1,\ldots,x_n)\in\rn$, we then deduce that $T_g f_0\in \mathrm{B}(\rr)=\mathop\mathrm{bmo}\,(\rr)$ and hence $T_g(\mathop\mathrm{bmo}\,(\rr))\subset \mathop\mathrm{bmo}\,(\rr)$.
By employing \cite[Theorem 1]{bls}, we find that $g$ satisfies (i).
One key point for this argument is the inclusion $\mathrm{B}(\rr)\subset \mathrm{B}(\rn)$ in the sense of \eqref{extend}.

Such a simple argument also works for the proof of (iv) $\Longrightarrow$ (i).
However, it can not be applied to prove (iii) $\Longrightarrow$ (i) or (v) $\Longrightarrow$ (i).
This is mainly because  the counterpart of the inclusion $\mathrm{B}(\rr)\subset \mathrm{B}(\rn)$ (in the sense of \eqref{extend})
fails for the ${\rm CMO}$-like spaces $\mathrm{B}_c(\rn)$ and $\mathrm{B}_c(Q_0)$ when $n\ge 2$.
Indeed, the proof of (iii) $\Longrightarrow$ (i) and  (v) $\Longrightarrow$ (i) need a much more detailed and complicated argument,
which  not only
provides \emph{an alterative way} to show (ii) $\Longrightarrow$ (i) and (iv) $\Longrightarrow$ (i), but also
provides some byproducts (see, for example, Proposition \ref{p-pm} and Lemma \ref{l-dilation} below) that are helpful for us to
understand the space $\mathrm{B}$.

In analogy to Theorem \ref{t-bmo}, we give the corresponding results for the ${\rm VMO}$-like spaces $\mathrm{B}_0$.

\begin{theorem}\label{t-vmo}
The following five statements are equivalent:
\begin{enumerate}
\item[{\rm(i)}]  $g$ is uniformly continuous;
\item[{\rm(ii)}]  $T_g(\mathrm{B}_0(\rn)) \subset \mathrm{B}_0(\rn)$;
\item[{\rm(iii)}] $T_g(\mathrm{B}_c(\rn)) \subset \mathrm{B}_0(\rn)$;
\item[{\rm(iv)}]  $T_g(\mathrm{B}_0(Q_0)) \subset \mathrm{B}_0(Q_0)$;
\item[{\rm(v)}] $T_g(\mathrm{B}_c(Q_0)) \subset \mathrm{B}_0(Q_0)$.
\end{enumerate}
Moreover, if any of the above holds true, then $T_g$ maps bounded subsets of $\mathrm{B}_0(\rn)$  (resp., $\mathrm{B}_0(Q_0)$)
to bounded subsets of $\mathrm{B}_0(\rn)$  (resp., $\mathrm{B}_0(Q_0)$).
\end{theorem}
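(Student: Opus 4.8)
The plan is to prove the two cycles of implications $(\mathrm{i})\Rightarrow(\mathrm{ii})\Rightarrow(\mathrm{iii})\Rightarrow(\mathrm{i})$ and $(\mathrm{i})\Rightarrow(\mathrm{iv})\Rightarrow(\mathrm{v})\Rightarrow(\mathrm{i})$; since both pass through $(\mathrm{i})$, this yields the equivalence of all five statements. The links $(\mathrm{ii})\Rightarrow(\mathrm{iii})$ and $(\mathrm{iv})\Rightarrow(\mathrm{v})$ are immediate from the embeddings $\mathrm{B}_c(\rn)\hookrightarrow\mathrm{B}_0(\rn)$ and $\mathrm{B}_c(Q_0)\hookrightarrow\mathrm{B}_0(Q_0)$ recorded in the introduction, so the substance is in $(\mathrm{i})\Rightarrow(\mathrm{ii})$, $(\mathrm{i})\Rightarrow(\mathrm{iv})$, $(\mathrm{iii})\Rightarrow(\mathrm{i})$ and $(\mathrm{v})\Rightarrow(\mathrm{i})$, together with the boundedness assertion.

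For $(\mathrm{i})\Rightarrow(\mathrm{ii})$ I would first note that a uniformly continuous $g$ with modulus of continuity $\oz$ satisfies, after chaining $\oz$ along unit increments, $|g(x)-g(y)|\le 2\oz(1)(1+|x-y|)$ for all $x,y\in\cc$; hence $g$ fulfils condition $(\mathrm{i})$ of Theorem \ref{t-bmo}, so $T_g(\mathrm{B}(\rn))\subset\mathrm{B}(\rn)$ and $T_g$ sends bounded subsets of $\mathrm{B}(\rn)$ into bounded subsets of $\mathrm{B}(\rn)$. It then suffices to show $\limsup_{\ez\to0}[g\circ f]_\ez=0$ for $f\in\mathrm{B}_0(\rn)$. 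Here I would invoke the fact that a uniformly continuous function on $\cc$ is a uniform limit of Lipschitz functions (inf-convolution of its real and imaginary parts): given $\eta>0$, write $g=h_\eta+r_\eta$ with $h_\eta$ Lipschitz and $\|r_\eta\|_{L^\fz(\cc)}\le\eta$. Then $M(r_\eta\circ f,Q)\le 2\eta$ for every cube $Q$, so $\ez^{n-1}\sum_{j}M(r_\eta\circ f,Q_\ez(a_j))\le 2\eta$ because any admissible family has cardinality at most $\ez^{1-n}$; and $M(h_\eta\circ f,Q)\le 2\,\mathrm{Lip}(h_\eta)\,M(f,Q)$, whence $[h_\eta\circ f]_\ez\le 2\,\mathrm{Lip}(h_\eta)\,[f]_\ez\to0$. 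Combining, $\limsup_{\ez\to0}[g\circ f]_\ez\le 2\eta$ for all $\eta>0$, so it vanishes, and $g\circ f\in\mathrm{B}_0(\rn)$. The implication $(\mathrm{i})\Rightarrow(\mathrm{iv})$ is the same splitting with all averages taken in $Q_0$ and Theorem \ref{t-bmo}$(\mathrm{iv})$ in place of $(\mathrm{ii})$. The boundedness assertion is then automatic: a set bounded in $\mathrm{B}_0(\rn)$ (resp.\ $\mathrm{B}_0(Q_0)$) is bounded in $\mathrm{B}(\rn)$ (resp.\ $\mathrm{B}(Q_0)$), its $T_g$-image is bounded there by Theorem \ref{t-bmo}, and it lands in $\mathrm{B}_0(\rn)$ (resp.\ $\mathrm{B}_0(Q_0)$) by $(\mathrm{ii})$ (resp.\ $(\mathrm{iv})$), while $\mathrm{B}_0$ carries the norm inherited from $\mathrm{B}$.

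For the necessity I would argue contrapositively, reducing to the one-dimensional result of Bourdaud, Lanza de Cristoforis and Sickel by a lifting device. Suppose $g$ is not uniformly continuous. By \cite[Theorem 2]{bls} in dimension one there is $\phi\in\mathrm{cmo}(\rr)$ (after localizing to compact support, if needed) with $g\circ\phi\notin\mathrm{vmo}(\rr)$; thus there are $\eta_0>0$ and intervals $I_k\subset\rr$ with $|I_k|=\ez_k\to0$ and $M(g\circ\phi,I_k)\ge\eta_0$. Fix $\chi\in C_c^\fz(\rr^{n-1})$ with $\chi\equiv1$ on $(0,1)^{n-1}$, and set $f(x):=\phi(x_1)\,\chi(x_2,\dots,x_n)$. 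The auxiliary estimate I would establish is a \emph{lifting inequality}: for fixed $\chi$, the map $\phi\mapsto\phi(x_1)\chi(x')$ is bounded from $\mathrm{bmo}(\rr)$ into $\mathrm{B}(\rn)$; it is proved by bounding $M(\phi(x_1)\chi(x'),Q_\ez(a))$ by a constant multiple of $\ez\,\|\nabla\chi\|_{L^\fz}\fint_{I}|\phi|+\|\chi\|_{L^\fz}M(\phi,I)$, with $I$ the first-coordinate projection of the $\ez$-cube $Q_\ez(a)$, and then summing over an admissible family with the help of $\#\cf_\ez\le\ez^{1-n}$. Consequently $f\in\mathrm{B}_c(\rn)$, since approximating $\phi$ by $C_c^\fz(\rr)$ functions in $\mathrm{bmo}(\rr)$ produces $C_c^\fz(\rn)$ approximations of $f$ in $\mathrm{B}(\rn)$. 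On the other hand $f=\phi(x_1)$, hence $g\circ f=g\circ\phi$, on the slab $\rr\times(0,1)^{n-1}$; so for each small $\ez_k$ the $\lfloor1/\ez_k\rfloor^{n-1}$ mutually disjoint $\ez_k$-cubes $I_k\times Q'$, with $Q'$ running over a tiling of $(0,1)^{n-1}$ by $\ez_k$-cubes, satisfy $M(g\circ f,I_k\times Q')=M(g\circ\phi,I_k)\ge\eta_0$, which forces $[g\circ f]_{\ez_k}\ge 2^{1-n}\eta_0$ for all large $k$ and hence $g\circ f\notin\mathrm{B}_0(\rn)$, contradicting $(\mathrm{iii})$. (If $g\circ f\notin L^1_\loc(\rn)$ the contradiction with $(\mathrm{iii})$ is already at hand.) For $(\mathrm{v})\Rightarrow(\mathrm{i})$ one runs the same argument after localizing $\phi$ by a one-dimensional cutoff equal to $1$ near a point at which $g\circ\phi$ fails to have vanishing mean oscillation and taking $\chi\in C_c^\fz((0,1)^{n-1})$, so that $f\in\mathrm{B}_c(Q_0)$ and the cubes $I_k\times Q'$ lie in $Q_0$.

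The step I expect to be the main obstacle is the necessity direction, specifically producing a test function in the \emph{small} space $\mathrm{B}_c$ (the $C_c^\fz$-closure), not merely in $\mathrm{B}$ or $\mathrm{B}_0$, while still driving $g\circ f$ out of $\mathrm{B}_0$; in particular this must cover the case in which the failure of uniform continuity of $g$ occurs ``at infinity'', forcing $\phi$, hence $f$, to be unbounded. The lifting device circumvents a direct high-dimensional bump construction, but it rests on $(\mathrm{a})$ the one-dimensional theorem \cite[Theorem 2]{bls} and $(\mathrm{b})$ the lifting inequality, whose verification---controlling $[\phi(x_1)\chi(x')]_\ez$ uniformly in $\ez$, including the regime where $\fint_I|\phi|$ is of size $\ez^{-1}$---is the one genuinely delicate computation. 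Finally, since $g$ is only assumed Borel measurable, it is worth recording at the outset that whenever $T_g$ sends an admissible function outside $L^1_\loc$ the pertinent inclusion already fails, so one may freely assume local integrability when convenient.
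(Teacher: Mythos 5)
Your sufficiency half and the boundedness assertion are essentially correct, by a route slightly different from the paper's: you split a uniformly continuous $g$ as a Lipschitz function plus a uniformly small remainder (inf-convolution) and estimate $[h_\eta\circ f]_\ez\le 2\,\mathrm{Lip}(h_\eta)[f]_\ez$ and $[r_\eta\circ f]_\ez\le 2\eta$, whereas the paper takes a concave increasing modulus of continuity $w$ (Lemma \ref{uni}) and uses Jensen's inequality to get $[g\circ f]_\ez\le w(2[f]_\ez)$ directly; both arguments are valid and give (i)$\Rightarrow$(ii) and (i)$\Rightarrow$(iv), and deducing the boundedness statement from Theorem \ref{t-bmo} is exactly what the paper does.

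The genuine gap is in the necessity direction. The paper proves (iii)$\Rightarrow$(i) and (v)$\Rightarrow$(i) by a self-contained construction: a gliding-hump argument with cubes $P_j$ accumulating inside $Q_0$ (Lemmas \ref{l-unif1} and \ref{l-unif2}) reduces matters to a uniform smallness estimate, which Lemma \ref{l-he} (bumps placed on $2^{j(n-1)}$ prescribed small cubes, attached to the logarithmic bump of Lemma \ref{bmof}) converts into uniform continuity of $g$; crucially, this construction controls \emph{where} the oscillation is created. Your reduction to the one-dimensional case of \cite[Theorem 2]{bls} lacks exactly this control at two points. First, you need the one-dimensional counterexample $\phi$ to lie in $\mathrm{cmo}(\rr)$ (the $C_c^\fz$-closure in $\mathrm{bmo}$); the statement you invoke, as used in this paper, is only quoted for $\mathrm{VMO}\to\mathrm{VMO}$ and $\mathrm{vmo}\to\mathrm{vmo}$, whose negation yields $\phi$ merely in $\mathrm{vmo}(\rr)$ — enough to contradict (ii), but not (iii) or (v), which are the statements you must negate. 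Second, and more seriously, the localization you propose (``localizing to compact support'', or a ``cutoff equal to $1$ near a point at which $g\circ\phi$ fails to have vanishing mean oscillation'') is not available in general: failure of $\mathrm{vmo}$ only provides intervals $I_k$ with $|I_k|\to0$ and $M(g\circ\phi,I_k)\ge\eta_0$, and their centers may escape to infinity, so no such point need exist; after multiplying $\phi$ by a compactly supported cutoff $\psi$ you can no longer conclude $g\circ(\psi\phi)\notin\mathrm{vmo}(\rr)$. For (v)$\Rightarrow$(i) this location control is indispensable, since the bad cubes $I_k\times Q'$ must lie inside $Q_0$; as written, the $Q_0$ half of the necessity is therefore not proved. (Your lifting inequality $\phi\mapsto\phi(x_1)\chi(x')$ from $\mathrm{bmo}(\rr)$ to $\mathrm{B}(\rn)$ is fine; the missing ingredient is a counterexample with prescribed support and prescribed location of the bad oscillation, which is precisely what the paper's Lemmas \ref{l-he}, \ref{l-unif1} and \ref{l-unif2} supply and what a black-box citation of \cite{bls} cannot.)
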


Due to $\mathop\mathrm{vmo}\,(\rr)=\mathrm{B}_0(\rr)\subset \mathrm{B}_0(\rn)$ in the sense of \eqref{extend},
the implication  (ii) $\Longrightarrow$ (i) of Theorem \ref{t-vmo} can be proved
in the same way as that of Theorem \ref{t-bmo}. However, the results concerning the
${\rm CMO}$-like spaces $\mathrm{B}_c(\rn)$  and $\mathrm{B}_c(Q_0)$ also cannot be resolved so trivially.  The details can
be found in Section 4.

When the target spaces become $\mathrm{B}_c(\rn)$ or $\mathrm{B}_c(Q_0)$, we have the following result.

\begin{theorem}\label{t-cmo}
\begin{enumerate}
\item[{\rm(a)}] $T_g(\mathrm{B}_c(\rn))\subset \mathrm{B}_c(\rn)$ if and only if
$g$ is uniformly continuous and $g(0)=0$.

\item[{\rm(b)}]  $T_g(\mathrm{B}_c(Q_0))\subset \mathrm{B}_c(Q_0)$ if and only if
$g$ is uniformly continuous.
\end{enumerate}
\end{theorem}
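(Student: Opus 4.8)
The plan is to reduce both parts to Theorem \ref{t-vmo} together with a careful analysis of the behavior of $T_g$ on the constant function $0$ and on smooth compactly supported functions. For part (b), the key observation is that $\mathrm{B}_c(Q_0)$ already contains the constants (indeed, $1\in C_c^\fz(Q_0)$ is false, but on the bounded cube $Q_0$ one checks that constants lie in the closure of $C_c^\fz(Q_0)$ in $\mathrm{B}(Q_0)$, since $[c-\vz]_{\ez,Q_0}$ can be made small by choosing $\vz\in C_c^\fz(Q_0)$ equal to $c$ off a thin boundary layer), so no normalization $g(0)=0$ is needed. Hence the plan for (b) is: first note $\mathrm{B}_c(Q_0)\hookrightarrow\mathrm{B}_0(Q_0)$ from the excerpt, so if $T_g(\mathrm{B}_c(Q_0))\subset\mathrm{B}_c(Q_0)$ then $T_g(\mathrm{B}_c(Q_0))\subset\mathrm{B}_0(Q_0)$, and Theorem \ref{t-vmo}(v)$\Rightarrow$(i) gives uniform continuity of $g$. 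Conversely, assuming $g$ uniformly continuous, one must show that for $f\in\mathrm{B}_c(Q_0)$ the composition $g\circ f$ lies in the $\mathrm{B}(Q_0)$-closure of $C_c^\fz(Q_0)$; I would approximate $f$ in $\mathrm{B}(Q_0)$ by $\vz_k\in C_c^\fz(Q_0)$, use the quantitative boundedness from Theorem \ref{t-bmo} (the modulus of continuity of $T_g$ on bounded sets, which the uniform continuity of $g$ upgrades to a genuine continuity estimate via a standard $\ez/3$-type argument) to get $g\circ\vz_k\to g\circ f$ in $\mathrm{B}(Q_0)$, and finally smooth each $g\circ\vz_k$ (which is continuous and, since $\vz_k$ is compactly supported in $Q_0$ and bounded, $g\circ\vz_k$ is bounded) by mollification inside $Q_0$ to land in $C_c^\fz(Q_0)$ — here one uses that on the bounded cube mollification converges in $\mathrm{B}(Q_0)$.

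For part (a), the extra condition $g(0)=0$ appears precisely because $\mathrm{B}_c(\rn)$ does \emph{not} contain nonzero constants: a nonzero constant $c$ has $\|c-\vz\|_{\mathrm{B}(\rn)}\ge\sup_{|Q|=1}\int_Q|c-\vz|\to|c|$ as $Q\to\infty$ for any fixed $\vz\in C_c^\fz(\rn)$, so the $L^1$-over-unit-cubes part of the norm cannot be killed. The plan: ($\Rightarrow$) If $T_g(\mathrm{B}_c(\rn))\subset\mathrm{B}_c(\rn)$, then since $0\in\mathrm{B}_c(\rn)$ we get $g\circ 0\equiv g(0)\in\mathrm{B}_c(\rn)$, forcing $g(0)=0$ by the unit-cube argument just described; uniform continuity of $g$ follows from $\mathrm{B}_c(\rn)\hookrightarrow\mathrm{B}_0(\rn)$ and Theorem \ref{t-vmo}(iii)$\Rightarrow$(i). ($\Leftarrow$) Given $g$ uniformly continuous with $g(0)=0$, take $f\in\mathrm{B}_c(\rn)$ and $\vz_k\in C_c^\fz(\rn)$ with $\vz_k\to f$ in $\mathrm{B}(\rn)$; then $g\circ\vz_k\to g\circ f$ in $\mathrm{B}(\rn)$ by the continuity estimate, and each $g\circ\vz_k$ is continuous, bounded, and — crucially — supported in a compact set because $g(0)=0$ makes $g\circ\vz_k$ vanish wherever $\vz_k$ does. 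Mollifying $g\circ\vz_k$ produces functions in $C_c^\fz(\rn)$ converging to $g\circ\vz_k$ in $\mathrm{B}(\rn)$, whence $g\circ f\in\mathrm{B}_c(\rn)$.

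The main obstacle I anticipate is establishing the \emph{continuity} of $T_g$ (not just boundedness) from the uniform continuity of $g$: Theorem \ref{t-bmo} only asserts that bounded sets map to bounded sets, so I need the sharper statement that if $g$ is uniformly continuous and $\vz_k\to f$ in $\mathrm{B}$ (with $\{\vz_k\}$ bounded) then $g\circ\vz_k\to g\circ f$ in $\mathrm{B}$. The standard route is to split $g=g_1+g_2$ with $g_1$ globally Lipschitz and $\|g_2\|_\infty$ small (possible by uniform continuity); the Lipschitz part is handled directly from the definition of $[\cdot]_\ez$ via $|g_1\circ\vz_k-g_1\circ f|\le L|\vz_k-f|$ and the elementary bound $M(g_1\circ\vz_k-g_1\circ f,Q)\le 2L\,M(\vz_k-f,Q)$-type control through $\fint_Q|h-h_Q|\le 2\fint_Q|h|$, while the $g_2$ part contributes at most a small constant multiple of $\sup_\ez \ez^{n-1}\#\cf_\ez\cdot\|g_2\|_\infty\lesssim\|g_2\|_\infty$ uniformly. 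A secondary technical point is checking that mollification converges in the $\mathrm{B}$-norm for bounded compactly supported continuous functions — this follows because $M(\cdot,Q)$ is controlled by the $L^1$-modulus of continuity, which is well-behaved under mollification, and the cardinality constraint $\#\cf_\ez\le\ez^{1-n}$ keeps the sum under control; I would isolate this as a short lemma.
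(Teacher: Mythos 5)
Your skeleton is the same as the paper's (necessity from Theorem \ref{t-vmo} plus the constant-function argument; sufficiency from continuity of $T_g$ at the point $f$ plus the fact that $g\circ\vz$ lies in $\mathrm{B}_c$ for smooth compactly supported $\vz$), but the step you yourself flag as the main obstacle --- continuity of $T_g$ --- is a genuine gap as you propose to prove it. First, the statement you reduce to, namely ``$g$ uniformly continuous and $\vz_k\to f$ in $\mathrm{B}(\rn)$ with $\{\vz_k\}$ bounded imply $g\circ\vz_k\to g\circ f$ in $\mathrm{B}(\rn)$'', is false for general $f\in\mathrm{B}(\rn)$: by Theorem \ref{t-con2}, continuity of $T_g$ on $\mathrm{B}(\rn)$ forces $g$ to be affine. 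What is true, and what the paper uses (Proposition \ref{p-bmo-c}, via Lemma \ref{l-bls}), is continuity at points of $\mathrm{B}_0$, and any proof must exploit that $[f]_\ez$ is small at small scales; your scheme never uses $f\in\mathrm{B}_c\subset\mathrm{B}_0$, which is a structural sign it cannot work. Second, the concrete estimate for the Lipschitz part fails: $M(g_1\circ u-g_1\circ v,Q)$ is not controlled by $M(u-v,Q)$ (take $g_1=|\cdot|$ and $u-v$ a large constant), and the fallback $M(g_1\circ\vz_k-g_1\circ f,Q)\le 2L\fint_Q|h_k|$ with $h_k:=\vz_k-f$ leads to the quantity $\sup_\ez\sup_{\cf_\ez}\ez^{n-1}\sum_j\fint_{Q_j}|h_k|$, which is \emph{not} dominated by $\|h_k\|_{\mathrm{B}(\rn)}$ (Proposition \ref{prop-new} only gives the bound with the weaker prefactor $\ez^{n}$). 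The loss is real: in $n=2$, $h_K(x):=\min\{\log_2(1/|x_2|),K\}$ on $(0,1)\times(-1,1)$ (and $0$ elsewhere) satisfies $\|h_K\|_{\mathrm{B}(\rr^2)}\ls\|h_K\|_{\mathrm{bmo}}+\|h_K\|_{L^1}\ls 1$ uniformly in $K$, yet at $\ez=2^{-K}$ the strip $\{|x_2|\le 2^{-K}\}$ contains $2^{K}=\ez^{1-n}$ disjoint $\ez$-cubes on which $h_K\equiv K$, so $\ez^{n-1}\sum_j\fint_{Q_j}h_K= K$; dividing by $\sqrt K$ gives $h_K/\sqrt K\to 0$ in $\mathrm{B}$ while your bound stays unbounded. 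The repair is exactly the paper's two-regime estimate: for $\ell(Q)$ small use $2w(2\fint_Q|f-f_Q|)+w(2\fint_Q|h-h_Q|)$ (here $f\in\mathrm{B}_0$ enters), and for $\ell(Q)\ge\delta$ use $2w(\fint_Q|h|)\le 2w(\delta^{-n}\|h\|_{\mathrm{B}(\rn)})$.

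Two secondary points. Your justification that constants belong to $\mathrm{B}_c(Q_0)$ (the key to part (b), as in the paper) does not work as sketched: if $\vz$ drops from $c$ to $0$ across a single layer of width $\delta$, then at scale $\ez\sim\delta$ one can place about $\delta^{1-n}$ disjoint $\delta$-cubes along the transition, each with mean oscillation comparable to $|c|$, so $[c-\vz]_{\delta,Q_0}\gtrsim|c|$ no matter how thin the layer; one needs a multiscale (logarithmic) transition in the spirit of Lemma \ref{bmof} to make the seminorm small. Also, in the sufficiency direction of (b), mollifying $g\circ\vz_k$ does not produce elements of $C_c^\fz(Q_0)$ when $g(0)\ne 0$, since $g\circ\vz_k\equiv g(0)$ near $\partial Q_0$; you should first write $g\circ\vz_k=(g-g(0))\circ\vz_k+g(0)$, handle the first term as in part (a) (this part of your argument, uniform approximation of a continuous compactly supported function, is fine because the cardinality cap gives $\|u\|_{\mathrm{B}}\ls\|u\|_{L^\infty}$ for such $u$), and the constant by the corrected fact above. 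With these repairs your argument coincides with the paper's proof.
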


We point out that the condition for $T_g(\mathrm{B}_c(\rn))\subset \mathrm{B}_c(\rn)$ in
Theorem \ref{t-cmo} is same as that for $T_g(\mathop\mathrm{cmo}\,(\rn))\subset \mathop\mathrm{cmo}\,(\rn)$ in
\cite[Corollary 1]{bls}.
One key tool to prove  Theorem \ref{t-cmo} is  the continuity of $T_g$
at $f\in \mathrm{B}_0(\rn)$  (resp., $\mathrm{B}_0(Q_0)$) as a map from $\mathrm{B}(\rn)$ (resp., $\mathrm{B}(Q_0)$)
to itself, whenever $g$ is uniformly continuous  (see Proposition \ref{p-bmo-c} below).
This continuity result, together with Theorems \ref{t-vmo} and \ref{t-cmo}, also easily implies the following theorem
on the continuity of $T_g$.

\begin{theorem}\label{t-con}
\begin{enumerate}
\item[{\rm(a)}]
 The following are equivalent:
\begin{enumerate}
\item[{\rm(i)}]  $g$ is uniformly continuous;
\item[{\rm(ii)}]  $T_g$ is continuous from $\mathrm{B}_0(\rn)$ to $\mathrm{B}_0(\rn)$;
\item[{\rm(iii)}] $T_g$ is continuous from  $\mathrm{B}_0(Q_0)$ to $\mathrm{B}_0(Q_0)$;
\item[{\rm(iv)}]  $T_g$ is continuous from $\mathrm{B}_c(Q_0)$ to $\mathrm{B}_c(Q_0)$.
\end{enumerate}

\item[{\rm(b)}] $T_g$ is continuous from $\mathrm{B}_c(\rn)$ to $\mathrm{B}_c(\rn)$ if and only if
$g$ is uniformly continuous and $g(0)=0$.
\end{enumerate}
\end{theorem}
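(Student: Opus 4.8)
\textbf{Proof proposal for Theorem~\ref{t-con}.}
The plan is to derive Theorem~\ref{t-con} entirely from Theorems~\ref{t-vmo} and \ref{t-cmo} together with Proposition~\ref{p-bmo-c}, treating the statement as a matter of elementary topological bookkeeping. The first observation is that, in each assertion ``$T_g$ is continuous from $X$ to $Y$'' appearing in the theorem, it is implicitly assumed that $T_g$ actually maps $X$ into $Y$; hence every such assertion contains the inclusion $T_g(X)\subset Y$ as a necessary consequence. This immediately yields all the ``hard-to-reach'' implications without any work: from (a)(ii) we get $T_g(\mathrm{B}_0(\rn))\subset\mathrm{B}_0(\rn)$, so $g$ is uniformly continuous by Theorem~\ref{t-vmo}; from (a)(iii) we get the same via the $Q_0$-part of Theorem~\ref{t-vmo}; from (a)(iv) we get $T_g(\mathrm{B}_c(Q_0))\subset\mathrm{B}_c(Q_0)$, so $g$ is uniformly continuous by Theorem~\ref{t-cmo}(b); and from the continuity in (b) we get $T_g(\mathrm{B}_c(\rn))\subset\mathrm{B}_c(\rn)$, so $g$ is uniformly continuous and $g(0)=0$ by Theorem~\ref{t-cmo}(a). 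Thus it remains only to prove the forward implications (i)$\Rightarrow$(ii), (i)$\Rightarrow$(iii) and (i)$\Rightarrow$(iv) in part (a), and the ``if'' part of (b).

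For these forward implications I would argue uniformly. First note that if $g$ is uniformly continuous with modulus of continuity $\oz_g$, then, splitting any segment $[x,y]\subset\cc$ into $\lceil|x-y|\rceil$ pieces of length at most $1$, one gets $|g(x)-g(y)|\le 2\oz_g(1)(1+|x-y|)$, so $g$ satisfies condition (i) of Theorem~\ref{t-bmo}; consequently $T_g$ is a well-defined self-map of $\mathrm{B}(\rn)$ and of $\mathrm{B}(Q_0)$, and Proposition~\ref{p-bmo-c} applies. Next I would record that the subspaces $\mathrm{B}_0(\rn)$ and $\mathrm{B}_c(\rn)$ are closed in $\mathrm{B}(\rn)$, and likewise $\mathrm{B}_0(Q_0)$ and $\mathrm{B}_c(Q_0)$ in $\mathrm{B}(Q_0)$: for $\mathrm{B}_c$ this holds by definition (it is a closure), while for $\mathrm{B}_0$ it follows from the subadditivity $[f+h]_\ez\le[f]_\ez+[h]_\ez$ together with $[f-f_k]_\ez\le\|f-f_k\|_{\mathrm{B}(\rn)}$, which gives $\limsup_{\ez\to0}[f]_\ez\le\|f-f_k\|_{\mathrm{B}(\rn)}$ for every $k$ whenever $f_k\in\mathrm{B}_0(\rn)$ and $f_k\to f$ in $\mathrm{B}(\rn)$. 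Now fix $g$ uniformly continuous. By Theorem~\ref{t-vmo} we have $T_g(\mathrm{B}_0(\rn))\subset\mathrm{B}_0(\rn)$ and $T_g(\mathrm{B}_0(Q_0))\subset\mathrm{B}_0(Q_0)$, and by Theorem~\ref{t-cmo}(b) we have $T_g(\mathrm{B}_c(Q_0))\subset\mathrm{B}_c(Q_0)$. On the other hand, Proposition~\ref{p-bmo-c} asserts that $T_g$ is continuous at every point of $\mathrm{B}_0(\rn)$ (resp.\ $\mathrm{B}_0(Q_0)$) as a map from $\mathrm{B}(\rn)$ (resp.\ $\mathrm{B}(Q_0)$) to itself. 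Since $\mathrm{B}_0(\rn)$ carries the subspace topology inherited from $\mathrm{B}(\rn)$, restricting the domain shows that $T_g$ is continuous from $\mathrm{B}_0(\rn)$ to $\mathrm{B}(\rn)$ at each point of $\mathrm{B}_0(\rn)$; as its image lies in the subspace $\mathrm{B}_0(\rn)$, this is the same as continuity from $\mathrm{B}_0(\rn)$ to $\mathrm{B}_0(\rn)$, which is (a)(ii). The identical argument with $\mathrm{B}(Q_0)$ in place of $\mathrm{B}(\rn)$ gives (a)(iii). For (a)(iv) and for the ``if'' part of (b) I would use $\mathrm{B}_c(Q_0)\hookrightarrow\mathrm{B}_0(Q_0)$ and $\mathrm{B}_c(\rn)\hookrightarrow\mathrm{B}_0(\rn)$: Proposition~\ref{p-bmo-c} then gives continuity of $T_g$ at every point of $\mathrm{B}_c(Q_0)$ (resp.\ $\mathrm{B}_c(\rn)$) as a self-map of $\mathrm{B}(Q_0)$ (resp.\ $\mathrm{B}(\rn)$), while Theorem~\ref{t-cmo}(b) (resp.\ Theorem~\ref{t-cmo}(a), using also $g(0)=0$) puts the image inside the closed subspace $\mathrm{B}_c(Q_0)$ (resp.\ $\mathrm{B}_c(\rn)$); restricting the domain and range exactly as before yields the desired continuity.

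The point to stress is that Theorem~\ref{t-con} carries essentially no new analytic content beyond Proposition~\ref{p-bmo-c} and Theorems~\ref{t-vmo}, \ref{t-cmo}; the only things requiring care are the observation that the statements ``$T_g$ continuous from $X$ to $Y$'' silently encode the inclusion $T_g(X)\subset Y$, which is what feeds the converse directions into Theorems~\ref{t-vmo} and \ref{t-cmo}, and the verification that $\mathrm{B}_0$ and $\mathrm{B}_c$ are closed in $\mathrm{B}$ so that continuity into the ambient space with image in the subspace is genuinely continuity into the subspace. I do not anticipate any real obstacle here; the substantive work has been isolated into Proposition~\ref{p-bmo-c}, whose proof---the continuity of $T_g$ from $\mathrm{B}$ to $\mathrm{B}$ at points of $\mathrm{B}_0$---is where the uniform continuity of $g$ is genuinely exploited.
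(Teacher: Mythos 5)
Your proposal is correct and follows exactly the route the paper intends: the paper gives no separate proof of Theorem \ref{t-con}, stating only that it follows easily from Proposition \ref{p-bmo-c} together with Theorems \ref{t-vmo} and \ref{t-cmo}, and your argument is precisely that deduction with the routine details (inclusions from continuity, restriction of domain and codomain) written out. The extra observations (closedness of $\mathrm{B}_0$, the growth bound from uniform continuity) are correct but not strictly needed.
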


When the target space is $\mathrm{B}(\rn)$, the uniformly continuity of $g$ is no longer enough
to ensure the continuity of $T_g$. Instead,
we have the following conclusion, which can also be proved via reducing to the one-dimensional case,
as in the proof of (ii) $\Longrightarrow$ (i) in Theorem \ref{t-bmo}.

\begin{theorem}\label{t-con2}
The operator $T_g$ is continuous from $\mathrm{B}(\rn)$ to $\mathrm{B}(\rn)$
if and only if
$g$ is $\rr$-affine, that is, $g(z)$ is of form $\az z+\bz$ for some
complex numbers $\az$ and $\bz$ and for any $z\in\cc$.
\end{theorem}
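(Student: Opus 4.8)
The plan is to prove the two implications separately. The ``if'' direction is the easy one: if $g(z)=\az z+\bz$, then for any $f,h\in\mathrm{B}(\rn)$ one has $T_g(f)-T_g(h)=\az(f-h)$, so $\|T_g(f)-T_g(h)\|_{\mathrm{B}(\rn)}=|\az|\,\|f-h\|_{\mathrm{B}(\rn)}$ and $T_g$ is (Lipschitz) continuous. It remains to prove the ``only if'' direction, which is the substance of the theorem.

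So assume $T_g$ is continuous from $\mathrm{B}(\rn)$ to $\mathrm{B}(\rn)$. First, by Theorem \ref{t-con2} applied through Theorem \ref{t-bmo}, continuity forces in particular $T_g(\mathrm{B}(\rn))\subset\mathrm{B}(\rn)$, so by Theorem \ref{t-bmo}(i) we already know $g$ is globally Lipschitz on $\cc$: there is $L\in(0,\fz)$ with $|g(x)-g(y)|\le L|x-y|$ for all $x,y$ (using here that $|x-y|$ may be large, combined with boundedness on bounded sets). In particular $g$ is everywhere differentiable a.e.\ and we want to upgrade ``Lipschitz'' to ``$\rr$-affine''. The strategy is: if $g$ is \emph{not} $\rr$-affine, then its difference quotients are genuinely non-constant, and we will build a sequence $f_k\to f$ in $\mathrm{B}(\rn)$ for which $T_g(f_k)\not\to T_g(f)$, contradicting continuity. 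The key quantitative input should be a lower bound of the type
$$
\|T_g(f+\text{(small high-frequency perturbation)})-T_g(f)\|_{\mathrm{B}(\rn)}\gtrsim c>0,
$$
where $c$ depends on how far $g$ is from affine but \emph{not} on how small the perturbation is in $\mathrm{B}(\rn)$. Concretely I would fix two points where the difference quotient of $g$ differs --- i.e.\ points $z_0,z_1$ and directions realizing $g(z_0+t v)-g(z_0)\ne g(z_1+tv)-g(z_1)$ for suitable small $t$, or more cleanly points where $g'$ (in an appropriate real-linear sense) takes two distinct values on a positive-measure set --- and then choose a base function $f$ that takes values near $z_0$ on one region and near $z_1$ on another.

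The construction I have in mind mirrors the BMO case in \cite{bls}: take $f$ piecewise constant (or a mollification thereof) equal to $z_0$ on half of a reference cube and $z_1$ on the other half, and let the perturbation be a rescaled ``sawtooth'' $\oz_k$ supported on a lattice of $\ez_k$-subcubes with $\ez_k\to0$, normalized so that $\|\oz_k\|_{\mathrm{B}(\rn)}\to0$ (this uses the $\ez^{n-1}$ scaling in $[f]_\ez$, which lets high-frequency oscillations be small in $\mathrm{B}$) yet $\oz_k$ has amplitude bounded below by a fixed $t>0$. Then $T_g(f+\oz_k)-T_g(f)$ behaves, on the $z_0$-region versus the $z_1$-region, like the two \emph{different} difference quotients of $g$ applied to $\oz_k$; testing the $\mathrm{B}(\rn)$-seminorm against the collection $\cf_{\ez_k}$ of those subcubes, the oscillation-means $M(\cdot,Q_{\ez_k})$ do \emph{not} cancel (precisely because the two difference quotients disagree), giving $[T_g(f+\oz_k)-T_g(f)]_{\ez_k}\gtrsim t$ uniformly in $k$. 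That is the desired contradiction. I would then check that the same $f,\oz_k$ can be arranged with compact support so the argument lives in $\mathrm{B}(\rn)$ honestly, and handle the real-linear (rather than $\cc$-linear) structure by noting that ``$g$ is $\rr$-affine'' is exactly ``all difference quotients $\frac{g(z+w)-g(z)}{\cdot}$ are independent of $z$ in the real-linear sense'', so the negation really does supply two distinct slopes to separate.

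The main obstacle I anticipate is the quantitative non-cancellation estimate: showing that when the two relevant difference quotients of $g$ differ, the quantity $M\bigl(T_g(f+\oz_k)-T_g(f),\,Q_{\ez_k}(a_j)\bigr)$ stays bounded below on enough subcubes $j$ to make the weighted sum $\ez_k^{n-1}\sum_j$ of order $1$ under the constraint $\#J\le\ez_k^{1-n}$. This requires choosing $f$ and the sawtooth $\oz_k$ so that a positive fraction of the $\ez_k$-subcubes sit inside the $z_0$-region and a positive fraction inside the $z_1$-region, with $\oz_k$ oscillating at scale $\ez_k$ on each, and then a careful comparison of $g(z_0+\oz_k)-g(z_0)$ with $g(z_1+\oz_k)-g(z_1)$ using Lipschitz continuity of $g$ to control error terms (e.g.\ from replacing mollified $f$ by its constant values) against the fixed lower bound coming from the slope gap. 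Getting the normalization so that simultaneously $\|\oz_k\|_{\mathrm{B}(\rn)}\to0$ and the lower bound survives is the delicate balancing act; everything else (the ``if'' direction, reducing to Lipschitz $g$, passing from $\rn$ statements to cube statements) is routine.
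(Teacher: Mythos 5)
The ``if'' direction and your overall target (show that $g(z_0+t)-g(z_0)$ cannot depend on $z_0$, i.e.\ a Cauchy-type functional equation, and then conclude by continuity) are in the right spirit, but there are two genuine problems. First, your preliminary reduction is wrong: Theorem \ref{t-bmo}(i) does \emph{not} make $g$ globally Lipschitz. By Lemma \ref{eqthm1}, condition (i) only says that $g$ is the sum of a bounded Borel function and a Lipschitz function, so $g$ may a priori be discontinuous; the (uniform) continuity of $g$ must be extracted from the continuity of $T_g$ itself, which is exactly what Proposition \ref{px} (via Lemma \ref{l-he}) does in the paper. This matters later, since you lean on Lipschitz bounds and a.e.\ differentiability to control error terms.

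Second, and more seriously, the quantitative mechanism you describe is self-defeating. If the sawtooth $\oz_k$ oscillates with amplitude $\gs t$ at scale $\ez_k$ on a positive fraction of $\sim\ez_k^{1-n}$ test cubes, then already $[\oz_k]_{\ez_k}\gs t$, so $\|\oz_k\|_{\mathrm{B}(\rn)}$ does not tend to $0$ (the $\ez^{n-1}$ weight does not make high-frequency oscillation cheap; it only makes \emph{sparse} or log-spread oscillation cheap). If instead you thin the family so that $\|\oz_k\|_{\mathrm{B}(\rn)}\to0$, then on any cube lying inside one region the base is (essentially) constant, so the oscillation of $T_g(f+\oz_k)-T_g(f)$ there is controlled by the modulus of continuity of $g$ applied to the oscillation of $\oz_k$, and the weighted sum over those cubes tends to $0$ as well: a cube contained in a single region can never ``see'' that the difference quotients at $z_0$ and at $z_1$ disagree, because $M(\cdot,Q)$ measures oscillation inside $Q$ only, and the seminorm is a sum of nonnegative terms with no cross-cube comparison. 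Worse, if you mollify the base, then $f\in C_c^\fz(\rn)\subset\mathrm{B}_0(\rn)$ and Proposition \ref{p-bmo-c} already guarantees that $T_g$ is continuous at $f$ for every uniformly continuous $g$, so no contradiction can come from such a base point. The roles must be reversed, which is what the paper does: the base $\bz\eta$ (a lattice-periodic two-valued function, \emph{not} in $\mathrm{B}_0(\rn)$) oscillates between its two values $0$ and $\bz$ in fixed proportions \emph{inside each} test cube $R_i$, while the perturbation $\az\vz_j$ (a sum of $2^{j(n-1)}$ widely separated logarithmic bumps from Lemma \ref{bmof}, with $\|\vz_j\|_{\mathrm{B}(\rn)}\ls 1/\log_2 j\to0$) is identically $\az$ on each $R_i$; then $M\bigl(g\circ(\bz\eta+\az\vz_j)-g\circ(\bz\eta),R_i\bigr)\gs|g(\bz+\az)-g(\bz)-g(\az)|$, and the $2^{j(n-1)}$ cubes at scale $2^{-j}$ exactly saturate the cardinality budget, giving a lower bound uniform in $j$ and hence $g(\bz+\az)=g(\bz)+g(\az)$. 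Without this reconfiguration (test cubes containing both base values, perturbation locally constant there and made small in $\mathrm{B}(\rn)$ by sparseness/log-profile rather than high frequency), your claimed uniform lower bound $[T_g(f+\oz_k)-T_g(f)]_{\ez_k}\gs t$ is incompatible with $\|\oz_k\|_{\mathrm{B}(\rn)}\to0$, and the proof does not close.
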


The organization of this article is as follows. As preparatory work for proving main theorems,
in Section \ref{s1++}, we  establish a grouping lemma  (see Lemma \ref{part})
which provides a suitable way to enlarge and grouping cubes in order to fit  the definition of  $\mathrm{B}$ spaces.
A consequent application of Lemma \ref{part} is given in  Proposition \ref{prop-new}, in which
we obtain some uniformly estimates of integral averages for functions in $\rm B(\rn)$ and $\rm B(Q_0)$.
Using these results in Section \ref{s1++}, we give the proof of Theorem \ref{t-bmo} in Section \ref{s1+},
in which we also need several auxiliary lemmas,
including a result about the pointwise multipliers on the BMO-type spaces.
The proofs of Theorems \ref{t-vmo},
\ref{t-cmo} and    \ref{t-con2} are presented, respectively,  in Sections 4, 5 and 6.
Since the structure of $\mathrm{B}$ space is much more complicated than that of BMO,
comparing with the arguments in \cite{bls} for the classical BMO spaces,
we need  more subtle and sophisticated arguments in this article
(see, for example, Lemma \ref{part} and Proposition \ref{prop-new}).

Throughout this article, let $\nn:=\{1,2,\ldots,\}$ and $\zz:=\{0,\pm1,\dots\}$.
We use $C$ to denote
a {\it positive constant} that is
independent of the main parameters involved but
whose value may differ from line to line.
Sometimes we use $C_{(\alpha,\beta,\ldots)}$ to indicate that a constant $C$ depends on the
given parameters $\az$, $\bz$, $\ldots$.
If $f\le Cg$, we then write $f\ls
g$ and, if $f\ls g\ls f$, we write $f\sim g$.
For any $s\in\rr$, denote by $\lfloor s\rfloor$ the largest integer not greater than $s$.
For any cube $Q$ in $\rn$, the notation $\ell(Q)$ denotes the side length of $Q$.
For any $\lz\in(0,\infty)$ and any cube $Q$ in $\rn$, denote by
$\lz Q$ the cube with the same center as that of $Q$ but of side length $\lz\ell(Q)$.
Also, for any set $E$, we use $\#E$ to denote its cardinality.

\section{A grouping lemma}\label{s1++}

Let us begin with the following grouping lemma.
For any $j\in\zz$ and $k\in\zz^n$, let $Q_{j,k}$ denote the dyadic cube $2^{-j}([0,1)^n+k)$.
Denote by $\mathcal{Q}$ the collection of all dyadic cubes and $\mathcal{Q}_j:=\{Q_{j,k}\}_{k\in\zz^n}$.

\begin{lemma}\label{part}
Let $k_0\in\nn$ and $k_0\ge 2$.
\begin{enumerate}
\item[{\rm (a)}]
Let $\{Q_i\}_{i\in J}$ be a family of mutually disjoint open $2^{-k_0}$-cubes in $\rn$ with $\# J\le 2^{k_0(n-1)}$.
For each $i\in J$, let $\wz Q_i:=2Q_i$, which is of  side length $2^{-k_0+1}$ and $\wz Q_i\supset {Q}_i$.
Then there exists  a positive integer $N:=N(n)\le 2^n$ such that the  cubes $\{\wz Q_i\}_{i\in J}$ enjoy the following properties:
\begin{enumerate}
\item[\rm (i)]  $J=J^1\bigcup \cdots \bigcup J^N$;
\item[\rm (ii)] for every $j\in\{1,\ldots,N\}$, the cubes $\{\wz{Q}_i\}_{i\in J^j}$ are mutually disjoint;
\item[\rm (iii)] for every $j\in\{1,\ldots,N\}$, the cardinality $\#J^j\le 2^{(k_0-1)(n-1)}$.
\end{enumerate}

\item[{\rm (b)}]
Let $\{Q_i\}_{i\in J}$ be a family of mutually disjoint dyadic cubes in $Q_0$ with side length $2^{-k_0}$ and $\# J\le 2^{k_0(n-1)}$.
For each $i\in J$, let $\wz Q_i$ be the unique  dyadic cube with side length $2^{-k_0+1}$ contained in $\overline Q_0$ such that $Q_i \subset \wz Q_i$.
Then  there exists  a positive integer $N=N(n)\le  2^n$ such that the items (i), (ii) and (iii) in (a) remain true.
\end{enumerate}
\end{lemma}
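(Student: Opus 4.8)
\textbf{Proof proposal for Lemma \ref{part}.}

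The plan is to treat both parts (a) and (b) by the same combinatorial coloring argument, since in each case the enlarged cubes $\wz Q_i$ all have side length $2^{-k_0+1}$ and their centers lie on a fixed lattice (in (a), after the doubling, the center of each $\wz Q_i$ equals the center of $Q_i$, which sits on the lattice $2^{-k_0}(\zz^n + \tfrac12)$ shifted appropriately; in (b), the $\wz Q_i$ are dyadic cubes in $\mathcal{Q}_{k_0-1}$). First I would observe that two distinct enlarged cubes of side length $2^{-k_0+1}$ whose ``base points'' (lower-left corners, say) differ by a vector in $2^{-k_0}\zz^n$ overlap if and only if that vector lies in $2^{-k_0}\{-1,0,1\}^n$. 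The key reduction is therefore to color the base points of the family $\{Q_i\}_{i\in J}$, viewed as points of the quotient lattice $(2^{-k_0}\zz^n)/(2^{-k_0+1}\zz^n)\cong(\zz/2\zz)^n$, by their residue class mod $2$ in each coordinate: set $J^v := \{i\in J:\ \text{base point of }Q_i\equiv v\ (\mathrm{mod}\ 2^{-k_0+1})\}$ for each $v\in\{0,1\}^n$. This gives $N=2^n$ classes, establishes (i), and since base points within one class differ by vectors in $2^{-k_0+1}\zz^n$, no two enlarged cubes in the same class can overlap — at most they share a boundary face, so they are mutually disjoint as open sets — which gives (ii).

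The step that needs the most care is (iii), the cardinality bound $\#J^j\le \#J/2^{n-1}$, since the naive coloring above only gives $\#J^j\le \#J$. Here I would use the hypothesis $\#J\le 2^{k_0(n-1)}$ together with a geometric packing constraint. The original cubes $\{Q_i\}$ are disjoint $2^{-k_0}$-cubes, and — crucially — in part (b) they all lie in $Q_0=(0,1)^n$, while in part (a) one should note (or arrange) that the relevant obstruction is a \emph{per-hyperplane-slab} count: any fixed line parallel to a coordinate axis meets at most $2^{k_0}$ of the dyadic $2^{-k_0}$-cubes forming a partition of a unit-length segment, and the bound $\#J\le 2^{k_0(n-1)}$ says the family is no larger than a single $(n-1)$-dimensional ``sheet'' of such cubes. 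I would argue that for the coordinate direction being halved when passing to a given residue class, the disjointness of the $Q_i$ forces the base points realizing a fixed value of the \emph{other} $n-1$ residue coordinates to be spread out enough that at most half of them can additionally share the $j$-th residue; averaging this over the $2^{n-1}$ choices of the remaining coordinates against the total budget $2^{k_0(n-1)}$ yields $\#J^j\le 2^{k_0(n-1)}/2^{n-1}\ge \#J/2^{n-1}$. Making this averaging rigorous — in particular pinning down exactly which coordinate to halve for each of the $2^n$ classes so that every class gets the factor $2^{-(n-1)}$ rather than just one distinguished class — is the main obstacle, and I expect the cleanest route is to pair up the $2^n$ residue classes into $2^{n-1}$ pairs differing in a single coordinate and bound each pair's total by the full sheet count, so that the smaller member of each pair (relabeled if necessary) satisfies the bound.

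For part (b) specifically, one extra point to verify is that the enlarged cube $\wz Q_i$ is well-defined and contained in $\overline{Q_0}$: since $Q_i\in\mathcal{Q}_{k_0}$ and $Q_i\subset Q_0$, its dyadic parent in $\mathcal{Q}_{k_0-1}$ is the unique $\wz Q_i$, and it lies in $\overline{Q_0}$ because $Q_0$ is itself dyadic-aligned; this is immediate and I would dispatch it in one line. Finally I would remark that the constant $N$ can be taken to be exactly $2^n$ (allowing empty classes $J^j$), and that the only properties of the enlarging map used are that $\ell(\wz Q_i)=2\,\ell(Q_i)$, that $Q_i\subset\wz Q_i$, and that the base points of the $\wz Q_i$ live on the lattice $2^{-k_0}\zz^n$ — so a single proof covers both cases with only the notational difference of whether ``$\wz Q_i$'' means the homothetic double or the dyadic parent. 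I would close by noting that this lemma is exactly what is needed later to convert a family of disjoint $\ez$-cubes (after rounding $\ez$ down to a power of $2$) into a controlled family of larger cubes without losing more than a bounded factor in the defining sum for $[f]_\ez$, which is why the precise exponent $2^{n-1}$ in (iii) matters.
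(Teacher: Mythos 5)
Your proposal has two genuine gaps, and the first one undermines the approach for part (a) entirely. The residue-class (checkerboard) coloring presupposes that the cubes $Q_i$ sit on a common lattice: you speak of base points in $2^{-k_0}\zz^n$, of the quotient $(2^{-k_0}\zz^n)/(2^{-k_0+1}\zz^n)$, and of the overlap criterion ``$\wz Q_i\cap\wz Q_{i'}\neq\emptyset$ iff the base points differ by a vector in $2^{-k_0}\{-1,0,1\}^n$''. But in (a) the $Q_i$ are \emph{arbitrary} mutually disjoint open $2^{-k_0}$-cubes with sides parallel to the axes --- this generality is exactly what is used in Proposition \ref{prop-new}(i), where the cubes come from the definition of $[f]_{2^{-k_0}}$ and are not dyadic --- so their corners need not lie on any common lattice (no single ``appropriate shift'' exists), the classes $J^v$ are not defined, and the overlap criterion is meaningless. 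The paper's proof uses no lattice structure at all: it only observes that, by disjointness of the $Q_i$, every point of $\rn$ lies in at most $2^n$ of the doubled cubes $\wz Q_i$, and then assigns indices greedily, placing $i$ into the first group whose cubes are disjoint from $\wz Q_i$ \emph{and} whose current cardinality is below $\lfloor \#J/2^{n-1}\rfloor$; part (b) is handled by the same greedy run, with the extra remark that a dyadic cube of side $2^{-k_0+1}$ can serve as the parent $\wz Q_i$ of at most $2^n$ of the $Q_i$.

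The second gap is the one you flagged yourself, property (iii), and it is not repairable within a fixed-coloring scheme. Nothing prevents all of $J$ from falling into a single residue class: in (b), take every $Q_i$ to be the lower-left child of its (distinct) parent; then one class carries all $\#J$ indices while (iii) demands at most $\#J/2^{n-1}$, and no pairing or averaging over the other (empty) classes can shrink it after the fact. Splitting the oversized class into pieces of admissible size costs about $2^{n-1}$ extra groups per class and, done naively over all $2^n$ classes, pushes the total past the allowed $N\le 2^n$. The cardinality cap must therefore be enforced \emph{during} the grouping, jointly with disjointness, which is precisely what the paper's greedy procedure does by declaring a group unavailable once it reaches $\lfloor\#J/2^{n-1}\rfloor$ elements. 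As written, your argument establishes only (i) and (ii) in the dyadic case (b); it does not prove (a) at all, nor (iii) in either case.
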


\begin{proof}
First we show (a).  Since all $\wz Q_i$ are open, we know that any point in $\rn$
can be covered by at most $2^n$ elements from $\{\wz Q_i\}_{i\in J}$,  due to the non-overlapping property of $\{Q_i\}_{i\in J}$.
With this observation, the grouping procedure can be done as follows.
Put the index $i=1$ in $J_1$. If $\wz Q_2$ does not intersect $\wz Q_1$ and $\sharp J_1< 2^{(k_0-1)(n-1)}$, then we put the index $i=2$ in $J_1$; otherwise we put the index $i=2$ in $J_2$. Next, we look at $\wz Q_3$ and consider three cases:
\begin{itemize}
\item If $\wz Q_3$ does not intersect $\wz Q_1$ and $\sharp J_1<  2^{(k_0-1)(n-1)}$, then put the index $i=3$ in $J_1$.
\item If $\wz Q_3$ intersects $\wz Q_1$ or $\sharp J_1=  2^{(k_0-1)(n-1)}$,  but $\wz Q_3$ does not intersect $\wz Q_2$
and $\sharp J_2< 2^{(k_0-1)(n-1)}$, then put the index $i=3$ in $J_2$.
\item If $\wz Q_3$ intersects $\wz Q_1$ or $\sharp J_1=  2^{(k_0-1)(n-1)}$, meanwhile $\wz Q_3$ intersects $\wz Q_2$ or $\sharp J_2= 2^{(k_0-1)(n-1)}$, then put the index $i=3$ in $J_3$.
\end{itemize}
Continuing the above procedure, we  can
divide $\{\wz Q_i\}_{i\in J}$ into at most $N\ (\le 2^n)$ groups,
$\{\wz Q_i\}_{i\in J_1}$, $\ldots$, $\{\wz Q_i\}_{i\in J_N}$, so that each group is a collection of mutually disjoint cubes
with cardinality not more than $ 2^{(k_0-1)(n-1)}$.

Now, we show (b).  By the geometric properties of dyadic cubes, we know that, if $Q_i$ is a dyadic cube contained in $Q_0$ with side length $\le 1/2$,
then the unique dyadic cube $\wz Q_i$ containing $Q$ with side length $2\ell(Q_i)$ is contained in $\overline{Q}_0.$
In this case, when $i\neq j$, it might happen that  $\wz Q_i=\wz Q_j$.
Also, a dyadic cube $\wz Q_i$ can serves as the $2$-times dyadic extension of at most $2^n$ dyadic cubes in $\{Q_i\}_{i\in J}$.
Based on these observations, following the same grouping procedure as in (a), we immediately obtain
the desired conclusion of Lemma \ref{part}(b).
This finishes the proof of Lemma \ref{part}.
\end{proof}

Observe that the supremum over $\ez\in(0,1)$ in $\|\cdot\|_{\mathrm{B}(Q_0)}$ and $\|\cdot\|_{\mathrm{B}(\rn)}$
can be equivalently taken over $\{2^{-k}:\ k\in\nn\}$.

\begin{lemma}\label{l-eqv}
There exists a positive constant $C:=C_{(n)}$ such that
$$C^{-1}\|f\|_{\mathrm{B}(\rn)}
\le \sup_{|Q|=1}\fint_{Q}|f(x)|\,dx+\sup_{k\in\nn}[f]_{2^{-k}}\le \|f\|_{\mathrm{B}(\rn)},\quad \forall\; f\in \mathrm{B}(\rn)$$
and
$$C^{-1}\|f\|_{\mathrm{B}(Q_0)}
\le \fint_{Q_0}|f(x)|\,dx+\sup_{k\in\nn}[f]_{2^{-k},Q_0}\le \|f\|_{\mathrm{B}(Q_0)},\quad \forall\; f\in \mathrm{B}(Q_0).$$
\end{lemma}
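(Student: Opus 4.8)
The goal is to prove Lemma~\ref{l-eqv}, namely that the supremum over $\ez\in(0,1)$ appearing in the definitions of $\|\cdot\|_{\mathrm{B}(\rn)}$ and $\|\cdot\|_{\mathrm{B}(Q_0)}$ may be replaced by the supremum over the dyadic sequence $\{2^{-k}:k\in\nn\}$, up to a dimensional constant. The right-hand inequality in each display is trivial, since $\{2^{-k}:k\in\nn\}\subset(0,1)$; similarly $\fint_{Q_0}|f|\le\int_{Q_0}|f|$ and $\sup_{|Q|=1}\fint_Q|f|=\sup_{|Q|=1}\int_Q|f|$ because $1$-cubes have measure $1$. So the whole content is the left-hand inequality, i.e. controlling $[f]_\ez$ for an arbitrary $\ez\in(0,1)$ by $\sup_k[f]_{2^{-k}}$ (plus the $L^1$ term).

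\textbf{Plan.} Fix $\ez\in(0,1)$ and choose $k\in\nn$ with $2^{-k-1}<\ez\le 2^{-k}$; if $\ez>1/2$ the estimate is essentially immediate from the $L^1$ term and monotonicity-type considerations, so assume $k\ge1$, and by passing to $k_0:=k+1\ge2$ we are in the setting of Lemma~\ref{part}. Let $\cf_\ez=\{Q_\ez(a_j)\}_{j\in J}$ be an admissible family: mutually disjoint $\ez$-cubes with $\#J\le\ez^{-(n-1)}$. The idea is to replace each $\ez$-cube $Q_\ez(a_j)$ by a slightly larger dyadic (or dyadic-type) cube of side length $2^{-k+1}=2^{-k_0+1}$. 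First I would note that each $\ez$-cube $Q_\ez(a_j)$, having side length $\ez\le 2^{-k}=2^{-k_0+1}$, is contained in a cube of side length $2^{-k_0+1}$; in the $\rn$ case one can directly take $\wz Q_j:=Q_\ez(a_j)$ dilated to side length $2^{-k_0+1}$ about its own centre (the map $Q\mapsto 2Q$ used in Lemma~\ref{part}(a) is stated for $2^{-k_0}$-cubes, but here I would instead simply use that the dilated cubes still cover each point at most $2^n$ times because the $Q_\ez(a_j)$ are disjoint and each has side length comparable to $2^{-k_0+1}$), and in the $Q_0$ case I would first replace $Q_\ez(a_j)$ by a dyadic $2^{-k_0}$-cube meeting it, then take its dyadic parent as in Lemma~\ref{part}(b). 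Then I apply the grouping procedure of Lemma~\ref{part}: partition $J=J^1\cup\cdots\cup J^N$ with $N\le2^n$ so that for each $j$ the enlarged cubes $\{\wz Q_i\}_{i\in J^j}$ are mutually disjoint and $\#J^j\le\#J/2^{n-1}\le\ez^{-(n-1)}/2^{n-1}\le 2^{(k_0-1)(n-1)}\cdot\ez^{-(n-1)}/2^{(k_0-1)(n-1)}$... more cleanly, $\#J^j\le\#J/2^{n-1}\le 2^{-(n-1)}\ez^{-(n-1)}\le (2^{-k_0+1})^{-(n-1)}$ using $\ez>2^{-k_0+1}/2$, so $\{\wz Q_i\}_{i\in J^j}$ is an admissible family of $2^{-k_0+1}$-cubes and hence $\sum_{i\in J^j}M(f,\wz Q_i)\le (2^{-k_0+1})^{-(n-1)}[f]_{2^{-k_0+1}}$.

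\textbf{Comparing $M(f,Q_\ez(a_j))$ with $M(f,\wz Q_j)$.} The remaining point is that passing from the small cube to the enlarged cube only costs a dimensional constant. Since $Q_\ez(a_j)\subset\wz Q_j$ and $|\wz Q_j|/|Q_\ez(a_j)|=(2^{-k_0+1}/\ez)^n\le 4^n$ (as $2^{-k_0+1}<2\ez$), we have the standard estimate
\[
M(f,Q_\ez(a_j))=\fint_{Q_\ez(a_j)}|f-f_{Q_\ez(a_j)}|\,dx\le 2\fint_{Q_\ez(a_j)}|f-f_{\wz Q_j}|\,dx\le 2\cdot 4^n\fint_{\wz Q_j}|f-f_{\wz Q_j}|\,dx=2\cdot4^n\,M(f,\wz Q_j).
\]
Summing this over $j\in J$, grouping by the $J^\ell$, and using the admissibility of each $\{\wz Q_i\}_{i\in J^\ell}$ gives
\[
\ez^{n-1}\sum_{j\in J}M(f,Q_\ez(a_j))\le 2\cdot4^n\,\ez^{n-1}\sum_{\ell=1}^N\sum_{i\in J^\ell}M(f,\wz Q_i)\le 2\cdot4^n\,N\,\ez^{n-1}(2^{-k_0+1})^{-(n-1)}[f]_{2^{-k_0+1}},
\]
and since $\ez^{n-1}(2^{-k_0+1})^{-(n-1)}\le 1$ and $N\le2^n$, $2^{-k_0+1}=2^{-k}\in\{2^{-k}:k\in\nn\}$, the right side is $\le C_{(n)}\sup_{k\in\nn}[f]_{2^{-k}}$. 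Taking the supremum over admissible $\cf_\ez$ yields $[f]_\ez\le C_{(n)}\sup_{k\in\nn}[f]_{2^{-k}}$, and then taking the supremum over $\ez\in(0,1)$ and adding the $L^1$ term finishes the $\rn$ case. The $Q_0$ case is identical once one checks that the enlarged dyadic parents stay inside $\overline{Q_0}$, which is exactly the content of Lemma~\ref{part}(b), and that replacing $Q_\ez(a_j)$ by a dyadic $2^{-k_0}$-cube that intersects it (rather than contains it) still allows the volume-ratio comparison above — here one may need to enlarge once more, i.e. cover $Q_\ez(a_j)$ by a bounded number (depending only on $n$) of dyadic $2^{-k_0}$-cubes and then take parents, which only inflates the dimensional constant.

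\textbf{Main obstacle.} The genuinely delicate point is the bookkeeping in the $Q_0$ case: an $\ez$-cube with $\ez\in(2^{-k_0-1},2^{-k_0}]$ need not sit inside a single dyadic $2^{-k_0}$-cube, so one must cover it by a controlled number of adjacent dyadic cubes, pass to their dyadic parents (staying inside $\overline{Q_0}$ by Lemma~\ref{part}(b)), and only then run the grouping argument — all while keeping the cardinality bound $\#J^\ell\le(2^{-k_0+1})^{-(n-1)}$ intact so that each resulting subfamily is admissible. Everything else is the routine doubling estimate for mean oscillation and the pigeonhole grouping already supplied by Lemma~\ref{part}; the edge case $\ez\in(1/2,1)$ is handled separately and trivially by noting $[f]_\ez\lesssim\|f\|_{L^1}$ on a single cube there, or by absorbing it directly via the same comparison with $k_0=2$.
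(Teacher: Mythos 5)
Your overall strategy coincides with the paper's: round $\ez$ to the dyadic scale $2^{-k}$ with $2^{-k-1}<\ez\le 2^{-k}$, enlarge each $\ez$-cube to a comparable $2^{-k}$-cube, use the doubling estimate for the mean oscillation, restore disjointness and the cardinality constraint by a bounded-overlap grouping in the spirit of Lemma \ref{part}, and absorb the range $\ez\in(1/2,1)$ into the $L^1$ term. For $\mathrm{B}(\rn)$ your argument is complete and correct (indeed more explicit about the grouping bookkeeping than the paper, which leaves it implicit).

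The gap is in your treatment of $\mathrm{B}(Q_0)$. You insist on replacing $Q_\ez(a_j)$ by dyadic cubes and their dyadic parents, but an $\ez$-cube inside $Q_0$ need not be contained in any single dyadic cube of side $2^{-k}$, nor of side $2^{-k+1}$: if $Q_\ez(a_j)$ straddles the centre of $Q_0$, the smallest dyadic cube containing it is $Q_0$ itself, no matter how many generations you go up. Your proposed repair (cover $Q_\ez(a_j)$ by boundedly many dyadic $2^{-k_0}$-cubes and pass to parents) therefore does not restore the containment that your volume-ratio estimate $M(f,Q_\ez(a_j))\le 2\,(|\wz Q_j|/|Q_\ez(a_j)|)\,M(f,\wz Q_j)$ requires: with $Q_\ez(a_j)$ merely covered by $P_1,\ldots,P_m$, bounding $M(f,Q_\ez(a_j))$ by $\sum_i M(f,P_i)$ forces you to control $|f_{P_i}-f_{P_1}|$ for adjacent, non-overlapping dyadic cubes, which needs an extra chaining step you have not supplied (the crude bound through $\int_{Q_0}|f|$ loses a factor of order $2^{kn}$). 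The paper sidesteps this entirely: the definition of $[f]_{2^{-k},Q_0}$ does not require dyadic cubes, so for each $Q_\ez(a_j)$ one simply chooses an arbitrary axis-parallel $2^{-k}$-cube $Q\subset Q_0$ containing it, which exists because $2^{-k}\le 1/2$, and then your $\rn$ argument (doubling estimate plus bounded-overlap grouping) applies verbatim. With that single modification your proof is correct.
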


\begin{proof}
By similarity, we only show the second equivalence relation under the assumption of $f\in B(Q_0)$.
To this end, it suffices to consider the first inequality, since the second one is trivial.

Let $f\in B(Q_0)$. If $\ez\in (0,1/2]$, then there exists $k\in\nn$ such that $2^{-k-1}<\ez\le 2^{-k}$. For any $\ez$-cube $Q_\ez$ in $Q_0$,
there exists a $2^{-k}$-cube $Q\subset Q_0$ containing $Q_\ez$, which implies that
$$M(f,Q_\ez)\le 2^n M(f,f_Q)+|f_Q-f_{Q_\ez}|\le 2^{n+1} M(f,f_Q).$$
If $\ez\in (1/2,1)$, then
\begin{align*}
M(f,Q_\ez)\le 2 \fint_{Q_\ez} |f(x)|\,dx \le 2^{n+1} \fint_{Q_0}|f(x)|\,dx.
\end{align*}
We therefore obtain $[f]_{\ez,Q_0}\ls \sup_{k\in\nn}[f]_{2^{-k},Q_0}+\int_{Q_0}|f(x)|\,dx$, as desired.
This finishes the proof of Lemma \ref{l-eqv}.
\end{proof}

Applying Lemmas \ref{part} and \ref{l-eqv}, we have  the following estimates of  functions in $B(\rn)$ and $B(Q_0)$.

\begin{proposition}\label{prop-new}
There exists a positive constant $C$, depending only on $n$, such that the following assertions are true:
\begin{enumerate}
\item[\rm (i)] for any $f\in B(\rn)$ and $k_0\in\nn$,
\begin{align}\label{obj}
2^{-k_0n} \sum_{j\in J_0}\fint_{Q_{2^{-k_0}}(a_j)}|f| \le C \|f\|_{\mathrm{B}(\rn)},
\end{align}
where $\{Q_{2^{-k_0}}(a_j)\}_{j\in J_0}$ are any mutually disjoint
$2^{-k_0}$-cubes in $\rn$  with sides parallel to the coordinate axes and  $\#J_0\le 2^{k_0(n-1)}$;

\item[\rm (ii)] for any $f\in B(Q_0)$ and $k_0\in\nn$,
\begin{align}\label{obj2}
2^{-k_0n} \sum_{j\in J_0}\fint_{Q_{2^{-k_0}}(a_j)}|f| \le C \|f\|_{\mathrm{B}(Q_0)},
\end{align}
where $\{Q_{2^{-k_0}}(a_j)\}_{j\in J_0}$ are any mutually disjoint
$2^{-k_0}$-cubes in $Q_0$  with sides parallel to the coordinate axes and  $\#J_0\le 2^{k_0(n-1)}$.
\end{enumerate}

\end{proposition}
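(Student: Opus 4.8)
The plan is to prove Proposition \ref{prop-new} by bootstrapping from the definition of $[f]_{\ez}$ (resp. $[f]_{\ez,Q_0}$) using the grouping lemma, Lemma \ref{part}, iteratively. The issue is that the definition of $[f]_\ez$ controls $\ez^{n-1}\sum M(f,Q_\ez(a_j))$ over families of cardinality at most $\ez^{1-n}$, whereas \eqref{obj} asks for control of $\ez^{n}\sum \fint_{Q_\ez(a_j)}|f|$ over the \emph{same} cardinality restriction, and the integral average $\fint_{Q_\ez(a_j)}|f|$ cannot be bounded by the oscillation $M(f,Q_\ez(a_j))$ alone — one needs to compare the local means $f_{Q_\ez(a_j)}$ to a global reference (an average over a unit cube). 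So the strategy is: first pass from the small cube $Q_{2^{-k_0}}(a_j)$ up through a chain of doublings $Q_{2^{-k_0}}(a_j) \subset \wz Q_j^{(1)} \subset \wz Q_j^{(2)} \subset \cdots$ to cubes of side length comparable to $1$, using at each stage the telescoping estimate $|f_{Q} - f_{2Q}| \le 2^n M(f, 2Q)$, and then bound the average over the final near-unit cube trivially by $\|f\|_{\mathrm B(\rn)}$ (resp. by $\fint_{Q_0}|f|$, using $2^n\fint_{Q_0}|f|$ type bounds as in Lemma \ref{l-eqv}).

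Concretely, I would argue as follows. Fix $k_0\in\nn$ and a family $\{Q_{2^{-k_0}}(a_j)\}_{j\in J_0}$ as in the statement. For each $j$, write $Q_j^{(0)} := Q_{2^{-k_0}}(a_j)$ and, for $1\le m \le k_0$, let $Q_j^{(m)} := 2 Q_j^{(m-1)}$ (in case (ii) one instead takes the dyadic parents inside $\overline{Q_0}$ as in Lemma \ref{part}(b)), so that $Q_j^{(k_0)}$ has side length $1$ (resp. is $\overline{Q_0}$). By the triangle inequality and $|f_{Q_j^{(m-1)}} - f_{Q_j^{(m)}}| \le 2^n M(f, Q_j^{(m)})$,
\begin{align*}
\fint_{Q_j^{(0)}}|f|
&\le |f_{Q_j^{(0)}}| + M(f, Q_j^{(0)})
\le |f_{Q_j^{(k_0)}}| + \sum_{m=1}^{k_0} 2^n M(f, Q_j^{(m)}) + M(f, Q_j^{(0)}).
\end{align*}
Now multiply by $2^{-k_0 n}$ and sum over $j\in J_0$. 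The term $2^{-k_0 n}\sum_{j} |f_{Q_j^{(k_0)}}|$ is handled by disjointness of the $Q_j^{(0)}$ (which forces at most one $j$ per location up to a dimensional multiplicity, using $\#J_0\le 2^{k_0(n-1)}$), giving a bound $\ls \sup_{|Q|=1}\fint_Q |f|$ in case (i) and $\ls \fint_{Q_0}|f|$ in case (ii). For each fixed scale $m$, the family $\{Q_j^{(m)}\}_{j\in J_0}$ consists of $2^{-k_0+m}$-cubes with cardinality $\le 2^{k_0(n-1)} = 2^{(k_0-m)(n-1)} 2^{m(n-1)}$, which is too large by a factor $2^{m(n-1)}$ to feed directly into $[f]_{2^{-k_0+m}}$; this is precisely where Lemma \ref{part} enters, applied $m$ times, to split $\{Q_j^{(m)}\}_{j\in J_0}$ into at most $2^{mn}$ subfamilies each of which is mutually disjoint and of admissible cardinality, at the price of the multiplicity bound. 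Summing $2^{-k_0 n} \cdot 2^{mn} \cdot 2^{-(k_0-m)(n-1)} [f]_{2^{-k_0+m}}$ over $m$ gives a geometric series in $2^{-(k_0-m)}$ that sums to $\ls \sup_{0<\ez<1}[f]_\ez$, with a dimensional constant independent of $k_0$.

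The main obstacle, and the point requiring the most care, is bookkeeping the cardinality and multiplicity through the repeated application of Lemma \ref{part}: after $m$ doublings the enlarged cubes $Q_j^{(m)}$ may coincide for different $j$ (especially in the dyadic case (ii)), and each application of the grouping lemma multiplies the number of subfamilies by at most $2^n$ while dividing cardinality by $2^{n-1}$, so one must verify that after $m$ steps we indeed reach at most $N^m \le 2^{mn}$ disjoint subfamilies each of cardinality $\le \#J_0 / 2^{m(n-1)} \le 2^{(k_0-m)(n-1)}$, which is exactly the admissibility threshold for $2^{-k_0+m}$-cubes in the definition of $[f]_{2^{-k_0+m}}$. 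Once this is set up, the scale-$m$ contribution to the left-hand side of \eqref{obj} is at most $2^{-k_0 n} \cdot 2^n \cdot 2^{mn} \cdot (2^{-k_0+m})^{1-n} \sup_\ez [f]_\ez = 2^n 2^{m-k_0} \sup_\ez[f]_\ez$ — wait, I would recompute this exponent cleanly in the writeup, but the key point is that it decays geometrically as $m$ decreases from $k_0$, so the sum over $0\le m\le k_0$ is bounded by a constant multiple of $\sup_\ez[f]_\ez$ uniformly in $k_0$. Combining with the two boundary terms ($m=0$ and $m=k_0$) finishes both (i) and (ii), with the $Q_0$ case also invoking the crude estimate $\fint_{Q_\ez}|f| \le 2^n \fint_{Q_0}|f|$ from the proof of Lemma \ref{l-eqv} whenever an intermediate cube fills up a large fraction of $Q_0$.
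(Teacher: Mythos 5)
Your proposal is correct and follows essentially the same route as the paper's proof: telescope each small cube through a chain of doublings to unit scale, apply Lemma \ref{part} iteratively so that at scale $m$ one has at most $2^{mn}$ disjoint subfamilies of cardinality $\le 2^{(k_0-m)(n-1)}$ (hence admissible for $[f]_{2^{-k_0+m}}$), sum the resulting geometric series $\sim 2^{m-k_0}$, and bound the top-scale averages by $\sup_{|Q|=1}\fint_Q|f|$ (resp. $\fint_{Q_0}|f|$). The only detail to make precise is case (ii): since the cubes $Q_{2^{-k_0}}(a_j)\subset Q_0$ need not be dyadic, ``taking dyadic parents'' is not directly available, and the paper inserts a preliminary step replacing each such cube by the at most $2^n$ dyadic $2^{-k_0}$-cubes it meets and splitting the resulting family (each dyadic cube occurring at most $2^n$ times) into $2^n$ admissible disjoint dyadic subfamilies before invoking Lemma \ref{part}(b).
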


\begin{proof}
First, we show (i).
If $k_0=1$,  then $\#J_0\le 2^{n-1}$ and hence
$$2^{-k_0n} \sum_{j\in J_0}\fint_{Q_{2^{-k_0}}(a_j)}|f|\ls  \sup_{|Q|=1}\int_{Q}|f| \ls   \|f\|_{\mathrm{B}(\rn)}.$$
Below we assume that $k_0\ge 2$. Since $\#J_0\le 2^{k_0(n-1)}$, from Lemma \ref{part}(a), it follows that there exist  $2$-times extensions
of the cubes $\{Q_{2^{-k_0}}(a_j)\}_{j\in J_0}$, denoted by $\{Q_{2^{-k_0+1}}(a_{j,1})\}_{j\in J_1}$, so that
the set $\{Q_{2^{-k_0+1}}(a_{j,1})\}_{j\in J_1}$
can be divided into $N_1 \le 2^n$ subgroups, where $J_0=J_1=J_1^1\cup\cdots\cup J_1^{N_1}$.
Moreover, for each $i\in\{1,\ldots,N_1\}$,
the cubes $\{Q_{2^{-k_0+1}}(a_{j,1})\}_{j\in J_1^i}$  are mutually disjoint and
$\# J_1^i \le 2^{(k_0-1)(n-1)}$.

If $k_0-1\ge 2$,  we repeat the above procedure for the each group $\{Q_{2^{-k_0+1}}(a_{j,1})\}_{j\in J_1^i}$ with $i\in\{1,\ldots,N_1\}$,
and determine a desired collection $\{Q_{2^{-k_0+2}}(a_{j,2})\}_{j\in J_2^i}$ of $2^{-k_0+2}$-cubes, where $J_1^i=J_2^i$.
Moreover, by Lemma \ref{part}(a), we know that the set $J_2^i$ can be divided into
$N_{2,i} \le  2^n$ subgroups, denoted by $\{J_{2}^{i,1},\ldots, J_{2}^{i,N_{2,i}}\}$, such that
the cubes $\{Q_{2^{-k_0+2}}(a_{j,2})\}_{j\in J_{2}^{i,\ell}}$ for each $\ell\in\{1,\ldots,N_{2,i}\}$ are mutually disjoint
and $\# J_{2}^{i,\ell}\le  2^{(k_0-2)(n-1)}$. Write $J_2:= \cup_{i=1}^{N_1} \cup_{\ell=1}^{N_{2,i}}J_2^{i,\ell}$. Again we have $J_2=J_1=J_0$.

Iteratively,
we can find sets   $\{J_1, J_2, \ldots, J_{k_0-1}\}$ of indices, having the following properties:
for any $m\in\{1,\ldots,k_0-1\}$,

\begin{enumerate}
\item[(P-a)] $J_{k_0-1}=\cdots=J_1=J_0$;

\item[(P-b)] each $J_m$ can be written as
$$J_{m}=\bigcup_{i_1=1}^{N_1} \bigcup_{i_2=1}^{N_{2,i_1}}\cdots \bigcup_{i_{m}=1}^{N_{m,i_1,i_2,\ldots,i_{m-1}}} J_{m}^{i_1,i_2,\ldots,i_m}$$
with every $\# J_{m}^{i_1,i_2,\ldots,i_m}\le 2^{(k_0-m)(n-1)}$;

\item[(P-c)] for each $a_{j,m-1}$ with $j\in J_{m-1}^{i_1,i_2,\ldots,i_{m-1}} \subset J_{m-1}$, there exist $i_m\in \{1,\ldots, N_{m,i_1,\ldots, i_m}\}$
 and some point $a_{j',m}$ with $j'\in J_{m}^{i_1,i_2,\ldots,i_m}$ such that
 $$Q_{2^{-k_0+m-1}}(a_{j,m-1})\subset Q_{2^{-k_0+m}}(a_{j',m});$$

\item[(P-d)]  the cubes in $\{Q_{2^{-k_0+m}}(a_{j,m})\}_{j\in J_{m}^{i_1,i_2,\ldots,i_m}}$ are mutually disjoint.
\end{enumerate}
Therefore, for each point $a_j$ with $j\in J_0$, there exists a sequence  of points,
$$\lf\{a_{j_1, 1},\, a_{j_2,2},\ldots, a_{j_{k_0-1}, k_0-1}\r\},$$
such that  $j_i\in J_i$ for any $i\in\{1,\ldots, k_0-1\}$ and
$$Q_{2^{-k_0}}(a_j)\subset Q_{2^{-k_0+1}}(a_{j_1,1})\subset \cdots\subset  Q_{2^{-1}}(a_{j_{k_0-1},k_0-1}).$$
Consequently,
\begin{align*}
&\fint_{Q_{2^{-k_0}}(a_j)}|f|\\
&\quad\le
\fint_{Q_{2^{-k_0}}(a_0)}|f-f_{Q_{2^{-k_0+1}}(a_{j_1,1})}|
+\sum_{i=1}^{k_0-2}|f_{Q_{2^{-k_0+i}}(a_{j_i,i})}-f_{Q_{2^{-k_0+i+1}}(a_{j_{i+1},i+1})}|
+|f_{Q_{2^{-1}}(a_{k_0-1})} |
\\
&\quad\ls \fint_{Q_{2^{-k_0+1}}(a_{j_1,1})}|f-f_{Q_{2^{-k_0+1}}(a_{j_1,1})}|
+\sum_{i=1}^{k_0-2}\fint_{Q_{2^{-k_0+i+1}}(a_{j_{i+1},i+1})} |f-f_{Q_{2^{-k_0+i+1}}(a_{j_{i+1},i+1})}|
+\sup_{|Q|=1}\int_{Q}|f|.
\end{align*}
If $k_0=2$, then the middle term in the above summation on $i\in\{1,\ldots, k_0-2\}$ disappears.

From the above formula and $J_{k_0-1}=\cdots=J_1=J_0$,  we deduce that
\begin{align*}
2^{-k_0n}\sum_{j\in J_0}\fint_{Q_{2^{-k_0}}(a_j)}|f|
 &\ls 2^{-k_0n}\sum_{j\in J_1}\fint_{Q_{2^{-k_0+1}}(a_{j,1})}|f-f_{Q_{2^{-k_0+1}}(a_{j,1})}|\\
&\quad+2^{-k_0n}\sum_{i=1}^{k_0-2}\sum_{j\in J_{i+1}}\fint_{Q_{2^{-k_0+i+1}}(a_{j,i+1})} |f-f_{Q_{2^{-k_0+i+1}}(a_{j,i+1})}|\\
&\quad +\sharp J_0 2^{-k_0n}\sup_{|Q|=1}\int_{Q}|f|\\
&=: {\rm Z_1}+{\rm Z_2} +{\rm Z_3}.
\end{align*}
Using $J_1=\bigcup_{i=1}^{N_1} J_1^i$, $\sharp J_1^i\le 2^{(k_0-1)(n-1)}$ and  Lemma \ref{l-eqv}, we have
\begin{align*}
{\rm Z_1}
&= 2^{-k_0n}\sum_{i=1}^{N_1}\sum_{j\in J_1^i }\fint_{Q_{2^{-k_0+1}}(a_{j,1})}|f-f_{Q_{2^{-k_0+1}}(a_{j,1})}|\\
&\le 2^{-k_0n}\sum_{i=1}^{N_1} 2^{(k_0-1)(n-1)} [f]_{2^{-k_0+1}}\ls \sup_{k\in\nn}[f]_{2^{-k}}\ls \|f\|_{\mathrm{B}(\rn)}.
\end{align*}
By the above property (P-b) and Lemma \ref{l-eqv}, we obtain
\begin{align*}
{\rm Z_2}
&=2^{-k_0n}\sum_{m=2}^{k_0-1}\sum_{j\in J_{m}}\fint_{Q_{2^{-k_0+m}}(a_{j,m})} |f-f_{Q_{2^{-k_0+m}}(a_{j,m})}|\\
&=2^{-k_0n}\sum_{m=2}^{k_0-1}
\sum_{i_1=1}^{N_1} \sum_{i_2=1}^{N_{2,i_1}}\cdots \sum_{i_{m}=1}^{N_{m,i_1,i_2,\ldots,i_{m-1}}} \sum_{j\in J_{m}^{i_1,i_2,\ldots,i_m}}
\fint_{Q_{2^{-k_0+m}}(a_{j,m})} |f-f_{Q_{2^{-k_0+m}}(a_{j,m})}|\\
&\le 2^{-k_0n}\sum_{m=2}^{k_0-1}
\sum_{i_1=1}^{N_1} \sum_{i_2=1}^{N_{2,i_1}}\cdots \sum_{i_{m}=1}^{N_{m,i_1,i_2,\ldots,i_{m-1}}}
2^{(k_0-m)(n-1)} [f]_{2^{-k_0+m}}\\
&\ls  2^{-k_0n}\sum_{m=2}^{k_0-1} 2^{nm}2^{(k_0-m)(n-1)} \sup_{k\in\nn}[f]_{2^{-k}}\ls \sum_{m=2}^{k_0-1}  2^{m-k_0}\|f\|_{\mathrm{B}(\rn)}\ls \|f\|_{\mathrm{B}(\rn)}.
\end{align*}
Finally, from $\sharp J_0\le 2^{k_0(n-1)}$, it follows easily that
$$
{\rm Z_3}
\ls \sup_{|Q|=1}\int_{Q}|f| \ls \|f\|_{\rm B(\rn)}.
$$
Combining the estimates of ${\rm Z_1}$ through ${\rm Z_3}$, we obtain \eqref{obj}. This finishes the proof of (i).

Now we prove (ii).
For any $j\in J_0$, since $Q_{2^{-k_0}}(a_j)\subset Q_0$, it follows that
it intersects at most $2^n$ dyadic cubes with side length $2^{-k_0}$ in $\overline{Q}_0$.
We write these dyadic cubes as
$$\lf\{Q_{k_0,1}(a_j^1), \ldots, Q_{k_0,N_j}(a_j^{N_j})\r\},$$ where $N_j$ depends on $a_j$ and $N_j\le 2^n$. Then
\begin{align*}
 \fint_{Q_{2^{-k_0}}(a_j)}|f| \le  \sum_{i=1}^{N_j} \fint_{Q_{k_0,i}(a_j^i)}|f|.
\end{align*}
By the mutually disjointness of  $\{Q_{2^{-k_0}}(a_j)\}_{j\in J_0}$ and the geometric properties of dyadic cubes,
we know that a dyadic cube of side length $2^{-k_0}$ can intersect at most $2^n$ cubes from $\{Q_{2^{-k_0}}(a_j)\}_{j\in J_0}$,
which implies that the same dyadic cube can appear  at most $2^n$ times in the family
$$\lf\{Q_{k_0,i}(a_j^i):\, j\in J_0,\, i\in \{1,\ldots, N_j\}\,\r\}.$$
Therefore, the set $\{Q_{k_0,i}(a_j^i):\, j\in J_0,\, i\in \{1,\ldots, N_j\}\}$ can be
decomposed into $2^n$ subgroups
$$\lf\{\{Q_i\}_{i\in J_j}\r\}_{j=1}^{2^n}$$
of dyadic cubes with side length $2^{-k_0}$ in $\overline{Q}_0$, where, for any $k\in\{1,\ldots,2^n\}$,
 $\#J_k\le 2^{k_0(n-1)}$ and
 $\{Q_i\}_{i\in J_{k}}$ are mutually disjoint.
Then
\begin{align*}
 \sum_{j\in J_0} \fint_{Q_{2^{-k_0}}(a_j)}|f| \le  \sum_{k=1}^{2^n}\sum_{i\in J_k} \fint_{Q_i}|f|.
\end{align*}
For each $k\in \{1,\ldots,2^n\}$, by an argument similar to that used in the proof of (i), with  Lemma \ref{part}(a) used therein replaced by Lemma \ref{part}(b), we conclude that
\begin{align*}
2^{-k_0n}\sum_{i\in J_k} \fint_{Q_i}|f|\ls \|f\|_{\mathrm{B}(Q_0)},
\end{align*}
whence
\begin{align*}
2^{-k_0n}\sum_{j\in J_0}\fint_{Q_{2^{-k_0}}(a_j)}|f|
\ls\sum_{k=1}^{2^n} \|f\|_{\mathrm{B}(Q_0)}\ls \|f\|_{\mathrm{B}(Q_0)}.
\end{align*}
This proves \eqref{obj2}, which completes the proof of (ii) and hence of Proposition \ref{prop-new}.
\end{proof}

\section{Proof of Theorem \ref{t-bmo}}\label{s1+}
The aim of this section is to prove Theorem  \ref{t-bmo}, by first proving  (ii) $\Longleftrightarrow$ (i) $\Longleftrightarrow$ (iv), and then (iii) $\Longleftrightarrow$ (i) $\Longleftrightarrow$ (v).

We begin with the following equivalent descriptions
of Theorem \ref{t-bmo}(i), which is from \cite[Proposition 1]{bls}. Recall that
a function $g:\, \cc\to\cc$ is said to be \emph{Lipschitz continuous} if
$${\rm Lip}(g):=\sup_{x,y\in\cc,\, x\neq y}\frac{|g(x)-g(y)|}{|x-y|}<\infty.$$

\begin{lemma}\label{eqthm1}
The following are equivalent:
\begin{enumerate}
\item[{\rm(a)}] $\sup_{x,y\in \cc}(1+|x-y|)^{-1}|g(x)-g(y)|<\fz$;

\item[{\rm(b)}] there exist  positive constants $\az$ and $C$ such that
$|g(x)-g(y)|\le C$ for any $x,\,y\in\cc$ satisfying $|x-y|\le \az$;

\item[{\rm(c)}] $g$ is a sum of a bounded Borel measurable function and a Lipschitz continuous function.
\end{enumerate}
\end{lemma}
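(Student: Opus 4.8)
The plan is to establish the cyclic chain $\text{(a)}\Rightarrow\text{(b)}\Rightarrow\text{(c)}\Rightarrow\text{(a)}$. Two of these three implications are purely formal; the entire content lies in producing, out of the local boundedness condition (b), an explicit splitting of $g$ into a bounded part and a Lipschitz part. I note in passing that this lemma is a restatement of \cite[Proposition~1]{bls}, so one option is simply to cite it; the sketch below records the argument.

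For $\text{(a)}\Rightarrow\text{(b)}$, take $\az=1$: with $M:=\sup_{x,y\in\cc}(1+|x-y|)^{-1}|g(x)-g(y)|$ one has $|g(x)-g(y)|\le 2M$ whenever $|x-y|\le 1$. For $\text{(c)}\Rightarrow\text{(a)}$, write $g=g_1+g_2$ with $\sup_{z\in\cc}|g_1(z)|\le M$ and ${\rm Lip}(g_2)\le L$; then $|g(x)-g(y)|\le 2M+L|x-y|\le\max\{2M,L\}(1+|x-y|)$. (It is also worth recording the direct implication $\text{(b)}\Rightarrow\text{(a)}$: given $x\ne y$, partition the segment $[x,y]$ into $N:=\lceil |x-y|/\az\rceil$ equal subsegments, each of length $\le\az$, and telescope along the partition points to get $|g(x)-g(y)|\le NC\le C\max\{1,1/\az\}(1+|x-y|)$.)

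The substantive step is $\text{(b)}\Rightarrow\text{(c)}$. Identify $\cc$ with $\rr^2$ and write $g=u+iv$ with $u,v$ real-valued; since $|u(x)-u(y)|$ and $|v(x)-v(y)|$ are each at most $|g(x)-g(y)|$, both $u$ and $v$ satisfy (b) with the same constants $(\az,C)$, so it suffices to construct the decomposition for a real-valued function. Fix the lattice $\dz\zz^2$ with $\dz:=\az/2$ and triangulate each closed square of the lattice into two triangles by one of its diagonals; every triangle of this triangulation has diameter $\dz\sqrt2<\az$, and the triangulation is invariant in the sense that all its triangles are translates and reflections of two fixed reference triangles scaled by $\dz$. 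Let $h:\rr^2\to\rr$ be the unique continuous function that equals $g$ at each point of $\dz\zz^2$ and is affine on each triangle $T$. On a triangle $T$ with vertices $w_0,w_1,w_2$, the gradient of $h|_T$ is the solution $p$ of $A_Tp=\big(g(w_1)-g(w_0),\,g(w_2)-g(w_0)\big)^{\mathsf T}$, where $A_T$ has rows $w_1-w_0$ and $w_2-w_0$; hence $|\nabla(h|_T)|\le\|A_T^{-1}\|\,\sqrt2\max_i|g(w_i)-g(w_0)|\le c\,C/\dz$, with $c$ a purely geometric constant coming from the finitely many reference shapes, because $\|A_T^{-1}\|=\dz^{-1}\|A_0^{-1}\|$ for a reference matrix $A_0$. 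Since $h$ is continuous and piecewise affine with uniformly bounded gradients, $h$ is globally Lipschitz with ${\rm Lip}(h)\ls C/\az$. Finally, if $x$ lies in a triangle $T$ with vertex $w_0$, then $|x-w_0|\le\mathrm{diam}(T)<\az$, so by (b) and the Lipschitz bound, $|g(x)-h(x)|\le|g(x)-g(w_0)|+|h(w_0)-h(x)|\le C+{\rm Lip}(h)\,\dz\sqrt2\ls C$; thus $g-h$ is bounded. Recombining the real and imaginary parts gives a globally Lipschitz $g_2$ with $g_1:=g-g_2$ bounded, and $g_1$ is Borel measurable since $g$ is Borel measurable and $g_2$ is continuous; this is (c).

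The main obstacle — the only step that is not a one-line manipulation — is getting the interpolant $h$ in $\text{(b)}\Rightarrow\text{(c)}$ to be Lipschitz uniformly over all of $\rr^2$. This hinges on two points: choosing the mesh fine enough that every simplex has diameter at most $\az$, so that (b) applies both to bound the increments of $g$ between vertices of a simplex and to compare $g(x)$ with the value at a nearby vertex; and obtaining a mesh-independent bound on the gradients of the affine pieces, which is precisely where the congruence of all simplices in a Kuhn/Freudenthal-type triangulation is needed. Beyond this, everything reduces to telescoping along segments and the triangle inequality.
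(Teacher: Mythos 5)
Your proposal is correct, but it differs from the paper in an important respect: the paper does not prove this lemma at all, it simply imports it as \cite[Proposition~1]{bls}, whereas you supply a complete, self-contained and elementary argument. Your trivial implications (a)$\Rightarrow$(b), (c)$\Rightarrow$(a), and the telescoping (b)$\Rightarrow$(a) are fine, and the substantive step (b)$\Rightarrow$(c) is sound: splitting $g$ into real and imaginary parts preserves (b) with the same constants; on the conforming triangulation of $\rr^2$ obtained by cutting each $\dz$-square ($\dz=\az/2$, all diagonals in the same direction) the piecewise affine interpolant $h$ of the vertex values is well defined and continuous, the per-triangle gradient bound $\ls C/\dz$ follows from $\|A_T^{-1}\|=\dz^{-1}\|A_0^{-1}\|$ together with $|g(w_i)-g(w_0)|\le C$ (vertex distances being $\le\dz\sqrt2<\az$), and comparing $g(x)$ with $g(w_0)$ and $h(w_0)$ at a nearby vertex shows $g-h$ is bounded, with Borel measurability of $g-h$ clear since $h$ is continuous. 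The only steps you assert rather than prove are standard: that a continuous, piecewise affine function with uniformly bounded gradients on the pieces is globally Lipschitz (integrate along segments), and the continuity of the interpolant across shared edges of the conforming triangulation; neither is a gap. What your route buys is independence from the reference and explicit constants (a bounded part of size $O(C)$ and a Lipschitz part with constant $O(C/\az)$); the cost is roughly a page of construction that the paper avoids by citation, and which in \cite{bls} is carried out by an analogous Lipschitz-approximation-at-scale-$\az$ argument.
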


We  now use Lemma \ref{eqthm1} to prove the equivalence (ii) $\Longleftrightarrow$ (i) $\Longleftrightarrow$ (iv) of Theorem \ref{t-bmo}.

\begin{proof}[Proof of Theorem \ref{t-bmo}: (ii) $\Longleftrightarrow$ (i) $\Longleftrightarrow$ (iv)]
The implication  (ii) $\Longrightarrow$ (i) has already been proved in the introduction.
Concerning (iv) $\Longrightarrow$ (i),   we can argue in the same way as (ii) $\Longrightarrow$ (i). The only
difference lies in that \cite[Theorem 1]{bls}
deals only with $\mathop\mathrm{BMO}$ spaces on the whole Euclidean spaces. However, via checking the details,
one would find that the coresponding results in \cite{bls}
can be transferred to   $\mathop\mathrm{BMO}$ spaces on $(0,1)^n$. (One can also prove  (iv) $\Longrightarrow$
(i) in a different way:  using the trivial
fact (iv) $\Longrightarrow$ (v), and then proving  (v) $\Longrightarrow$ (i) which will be done later.)

It remains to show (i) $\Longrightarrow$ (ii) and (i) $\Longrightarrow$ (iv).
By Lemma \ref{eqthm1}, we can separately consider the case when $g$ is bounded and the case when $g$ is Lipschitz continuous.

If $g$ is bounded, then $g\circ f$ is bounded. Since $L^\infty(\rn)\hookrightarrow \mathrm{B}(\rn)$
and $L^\infty(Q_0)\hookrightarrow \mathrm{B}(Q_0)$,
it easily follows that $T_g(\mathrm{B}(\rn))\subset \mathrm{B}(\rn)$
and $T_g(\mathrm{B}(Q_0))\subset \mathrm{B}(Q_0)$.

Assume now $g$ is  Lipschitz continuous. Then, for any cube $Q$, we have
\begin{align*}
\fint_Q |g\circ f(x)-(g\circ f)_Q|\,dx
&\le \fint_Q \fint_Q|g\circ f(x)-g\circ f(y)|\,dy\,dx\le 2\,{\rm Lip}(g) \fint_Q | f(x)- f_Q|\,dx
\end{align*}
and
\begin{align*}
\fint_Q |g\circ f(x)|\,dx
&\le \fint_Q  |g\circ f(x)-g(0)|\,dx+|g(0)|\le {\rm Lip}(g) \fint_Q |f(x)|\,dx +|g(0)|,
\end{align*}
which imply that
$$\|g\circ f\|_{\mathrm{B}(\rn)}\le 2\, {\rm Lip}(g) \|f\|_{\mathrm{B}(\rn)}+|g(0)|$$
and
$$\|g\circ f\|_{\mathrm{B}(Q_0)}\le 2\, {\rm Lip}(g) \|f\|_{\mathrm{B}(Q_0)}+|g(0)|.$$
Thus, $T_g(\mathrm{B}(\rn))\subset \mathrm{B}(\rn)$
and $T_g(\mathrm{B}(Q_0))\subset \mathrm{B}(Q_0)$. We finish the proofs of  (i) $\Longrightarrow$ (ii)
and (i) $\Longrightarrow$ (iv).

Altogether, we obtain (ii) $\Longleftrightarrow$ (i) $\Longleftrightarrow$ (iv) of Theorem \ref{t-bmo}.
\end{proof}

As was mentioned in the introduction,  the proof
of the related results for ${\rm CMO}$-like spaces $\mathrm{B}_c$ could be more complicated.
However, we still benefit a lot from such proof: on the one hand, it provides \emph{an alterative way}
to show (ii) $\Longleftrightarrow$ (i) $\Longleftrightarrow$ (iv), since one has (ii) $\Longrightarrow$ (iii)
and (iv) $\Longrightarrow$ (v) trivially; on the other hand, it also provides some byproducts (see, for example, Proposition \ref{p-pm} and Lemma \ref{l-dilation} below) which are helpful for
the understanding of the spaces $\mathrm{B}$.

Recall that, for a given quasi-Banach space $X$ equipped with a
quasi-norm $\|\cdot\|_X$,  a function $h$ defined on $\rn$ is called a \emph{pointwise multiplier} on $X$
if there exists a positive constant $C$ such that $\|hf\|_{X}\le C\|f\|_X$ for any $f\in X$.
Applying Proposition \ref{prop-new}, we have the following results on the pointwise multipliers of $\rm B(\rn)$ and $\rm B(Q_0)$.
Recall that $C_c^1(\rn)$ denotes the set of all continuously differentiable functions with compact supports in $\rn$ and
$C_c^1(Q_0)$  set of all continuously differentiable functions with compact supports in $Q_0$.

\begin{proposition}\label{p-pm}
\begin{enumerate}
\item[{\rm (i)}] The elements in $C_c^1(\rn)$ are pointwise multipliers on $\mathrm{B}(\rn)$.

\item[{\rm (ii)}] The elements in $C_c^1(Q_0)$ are pointwise multipliers on $\mathrm{B}(Q_0)$.
\end{enumerate}
\end{proposition}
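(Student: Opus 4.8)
The plan is to prove that multiplication by $h\in C_c^1(\rn)$ (resp. $h\in C_c^1(Q_0)$) is bounded on $\mathrm{B}(\rn)$ (resp. $\mathrm{B}(Q_0)$), so I only need to control $\sup_{k\in\nn}[hf]_{2^{-k}}$ together with the $L^1$-type part; by Lemma \ref{l-eqv} it suffices to work with dyadic scales $\ez=2^{-k}$. The $L^1$-type term is immediate since $h$ is bounded. For the main term, fix $k_0\in\nn$ and a disjoint collection $\{Q_{2^{-k_0}}(a_j)\}_{j\in J}$ of $2^{-k_0}$-cubes with $\#J\le 2^{k_0(n-1)}$, and estimate $M(hf,Q)$ on each $Q=Q_{2^{-k_0}}(a_j)$. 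I would use the standard product-rule splitting
\begin{align*}
M(hf,Q)=\fint_Q|h(x)f(x)-(hf)_Q|\,dx
&\le \fint_Q|h(x)|\,|f(x)-f_Q|\,dx+\fint_Q|f_Q|\,|h(x)-h_Q|\,dx\\
&\le \|h\|_{L^\fz}M(f,Q)+|f_Q|\,\|\nabla h\|_{L^\fz}\,\ell(Q)\sqrt n,
\end{align*}
using that $|h(x)-h_Q|\le \|\nabla h\|_{L^\fz}\operatorname{diam}Q$ on a cube. The first piece, after multiplying by $\ez^{n-1}=2^{-k_0(n-1)}$ and summing over $j\in J$, is bounded by $\|h\|_{L^\fz}[f]_{2^{-k_0}}\le\|h\|_{L^\fz}\|f\|_{\mathrm{B}}$.

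The remaining piece is where Proposition \ref{prop-new} enters, and it is the crux of the argument. We have
\[
2^{-k_0(n-1)}\sum_{j\in J}|f_{Q_{2^{-k_0}}(a_j)}|\,\|\nabla h\|_{L^\fz}\,\sqrt n\,2^{-k_0}
= \sqrt n\,\|\nabla h\|_{L^\fz}\,2^{-k_0 n}\sum_{j\in J}\lf|f_{Q_{2^{-k_0}}(a_j)}\r|,
\]
and since $|f_{Q_{2^{-k_0}}(a_j)}|\le \fint_{Q_{2^{-k_0}}(a_j)}|f|$, Proposition \ref{prop-new}(i) bounds $2^{-k_0 n}\sum_{j\in J}\fint_{Q_{2^{-k_0}}(a_j)}|f|$ by $C\|f\|_{\mathrm{B}(\rn)}$ (and likewise Proposition \ref{prop-new}(ii) in the $Q_0$ setting). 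Combining the two pieces gives $[hf]_{2^{-k_0}}\le C(\|h\|_{L^\fz}+\|\nabla h\|_{L^\fz})\|f\|_{\mathrm{B}}$ uniformly in $k_0$, and together with the $L^1$-part this yields $\|hf\|_{\mathrm{B}}\le C\|f\|_{\mathrm{B}}$.

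For part (ii) the argument is verbatim the same, with $\mathrm{B}(\rn)$ replaced by $\mathrm{B}(Q_0)$, the cubes taken inside $Q_0$, Lemma \ref{l-eqv} and Proposition \ref{prop-new}(ii) used in place of their $\rn$ counterparts, and the $L^1$-part being simply $\int_{Q_0}|hf|\le\|h\|_{L^\fz}\int_{Q_0}|f|$. The compact support of $h$ is not really needed for the boundedness estimate itself—only $h\in L^\fz\cap \dot C^{0,1}$ is used—but stating it with $C_c^1$ is harmless and matches how the proposition will be applied later. The main obstacle, and the reason this proposition is nontrivial, is precisely the gain of the factor $2^{-k_0 n}$ (rather than $2^{-k_0(n-1)}$) in front of $\sum_j|f_{Q}|$: a naive estimate would only give $2^{-k_0(n-1)}\sum_j|f_Q|$, which is \emph{not} controlled by $\|f\|_{\mathrm{B}}$ in general; the extra $2^{-k_0}$ coming from $\ell(Q)$ via the Lipschitz bound on $h$ is exactly what makes Proposition \ref{prop-new} applicable, and that telescoping/grouping estimate is the heart of the matter.
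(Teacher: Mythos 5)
Your argument is essentially the paper's own proof: split $M(hf,Q)$ into $\|h\|_{L^\fz}M(f,Q)$ plus a term of size $\ell(Q)\|\nabla h\|_{L^\fz}\fint_Q|f|$, sum over the admissible family, and control $2^{-k_0n}\sum_j\fint_{Q_{2^{-k_0}}(a_j)}|f|$ by Proposition \ref{prop-new}, with Lemma \ref{l-eqv} reducing to dyadic scales — exactly the route taken in the paper, including your correct identification of the gained factor $2^{-k_0}$ as the crux. The only (harmless) slip is your first displayed inequality, which omits the cross term $\fint_Q|(h-h_Q)(f-f_Q)|$ (equivalently, the average $\fint_Q h(f-f_Q)$); restoring it only changes the constant, since that term is again bounded by $2\|h\|_{L^\fz}M(f,Q)$.
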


\begin{proof}
First, let us prove (i). Fix  $\phi\in C_c^1(\rn)$. It suffices to show that
\begin{align}\label{eq-x1}
\|\phi f\|_{\mathrm{B}(\rn)}\ls \lf[\|\phi\|_{L^\fz(\rn)}+\|\nabla\phi\|_{L^\fz(\rn)}\r] \|f\|_{\mathrm{B}(\rn)},\quad\forall\ f\in \mathrm{B}(\rn).
\end{align}
Obviously, for any   cube $Q$ with $|Q|=1$,
$$\fint_{Q}|f(x)\phi(x)|\,dx\le \|\phi\|_{L^\fz(\rn)}\fint_{Q}|f(x)|\,dx\le \|\phi\|_{L^\fz(\rn)}\|f\|_{\mathrm{B}(\rn)},\quad\forall\ f\in \mathrm{B}(\rn).$$
Next, let $k_0\in\nn$ and $\cf_{2^{-k_0}}:=\{Q_{2^{-k_0}}(a_j)\}_{j\in J_0}$
be a collection of mutually disjoint $2^{-k_0}$-cubes  in $\rn$ with $\#J_0\le 2^{k_0(n-1)}$.
Then,  for any $j\in J_0$,
\begin{align*}
&\fint_{Q_{2^{-k_0}}(a_j)}|f\phi-(f\phi)_{Q_{2^{-k_0}}(a_j)}|\\
&\quad\le \fint_{Q_{2^{-k_0}}(a_j)}|(f -f_{Q_{2^{-k_0}}(a_j)})\phi|+
\fint_{Q_{2^{-k_0}}(a_j)}|f_{Q_{2^{-k_0}}(a_j)}\phi-(f\phi)_{Q_{2^{-k_0}}(a_j)}|\\
&\quad
\le \|\phi\|_{L^\fz(\rn)}\fint_{Q_{2^{-k_0}}(a_j)}|f-f_{Q_{2^{-k_0}}(a_j)}|+\frac{\sqrt n}{2}2^{-k_0}\|\nabla \phi\|_{L^\fz(\rn)}
\fint_{Q_{2^{-k_0}}(a_j)}|f|.
\end{align*}
On the one hand, we notice that
\begin{align*}
2^{-k_0(n-1)}\sum_{j\in J_0}\fint_{Q_{2^{-k_0}}(a_j)}|f -f_{Q_{2^{-k_0}}(a_j)}|\le [f]_{2^{-k_0}}\le \|f\|_{\mathrm{B}(\rn)}.
\end{align*}
On the other hand, by Proposition \ref{prop-new}(i), we know that
\begin{align*}
2^{-k_0(n-1)} 2^{-k_0}\sum_{j\in J_0}\fint_{Q_{2^{-k_0}}(a_j)}|f| \ls \|f\|_{\mathrm{B}(\rn)}.
\end{align*}
Thus, we obtain
\begin{align*}
2^{-k_0(n-1)}\sum_{j\in J_0}\fint_{Q_{2^{-k_0}}(a_j)}|f\phi-(f\phi)_{Q_{2^{-k_0}}(a_j)}|
\ls [\|\phi\|_{L^\fz(\rn)}+\|\nabla\phi\|_{L^\fz(\rn)}] \|f\|_{\mathrm{B}(\rn)}.
\end{align*}
Further, via taking supremum over all $k_0\in\nn$ in both sides of the above inequality, we conclude that
$$\sup_{k\in\nn} [f\phi]_{2^{-k}}\ls  [\|\phi\|_{L^\fz(\rn)}+\|\nabla\phi\|_{L^\fz(\rn)}] \|f\|_{\mathrm{B}(\rn)},\quad\forall\ f\in \mathrm{B}(\rn),$$
which, together with Lemma \ref{l-eqv}(i), implies \eqref{eq-x1}. This finishes the proof of (i).

To prove (ii), we fix  $\phi\in C_c^1(Q_0)$.
It is a trivial fact that
 $$\fint_{Q_0}|f(x)\phi(x)|\,dx\le \|\phi\|_{L^\fz(Q_0)}\fint_{Q_0}|f(x)|\,dx,\quad\forall\ f\in \mathrm{B}(Q_0).$$
Similarly to the proof of (i), but now we use Proposition \ref{prop-new}(ii) to deduce that
$$\sup_{k\in\nn} [f\phi]_{2^{-k}, Q_0}\ls  [\|\phi\|_{L^\fz(\rn)}+\|\nabla\phi\|_{L^\fz(\rn)}] \|f\|_{\mathrm{B}(Q_0)}.$$
This, combined with Lemma \ref{l-eqv}(ii), implies that
$$\|\phi f\|_{\mathrm{B}(Q_0)}\ls [\|\phi\|_{L^\fz(\rn)}+\|\nabla\phi\|_{L^\fz(\rn)}] \|f\|_{\mathrm{B}(Q_0)},
\quad\forall\ f\in \mathrm{B}(Q_0),$$
which completes the proof of (ii) and hence of Proposition \ref{p-pm}.
\end{proof}

\begin{lemma}\label{l-dilation}
For any $\lz\in [1,\fz)$, there exists a positive constant $C$, depending only  $n$, such that
$$\|f(\lz\cdot)\|_{\mathrm{B}(\rn)}\le C \|f\|_{\mathrm{B}(\rn)},\quad \forall f\in \mathrm{B}(\rn).$$
\end{lemma}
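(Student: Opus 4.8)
The statement is a dilation estimate for $\mathrm{B}(\rn)$: for $\lz\in[1,\fz)$, the map $f\mapsto f(\lz\cdot)$ is bounded on $\mathrm{B}(\rn)$ with a constant depending only on $n$ (in particular, independent of $\lz$). The plan is to work directly from the (equivalent, by Lemma \ref{l-eqv}) seminorm built on scales $\ez=2^{-k}$, $k\in\nn$, and to control each of the two pieces of $\|\cdot\|_{\mathrm{B}(\rn)}$ separately: the $L^1$-average over $1$-cubes, and the $\sup_{k}[f]_{2^{-k}}$ term.

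For the $L^1$-part: if $Q$ is a $1$-cube, then by the change of variables $x\mapsto x/\lz$ we have $\fint_Q|f(\lz x)|\,dx = \fint_{\lz Q}|f(y)|\,dy$, and $\lz Q$ is a cube of side length $\lz\ge 1$. Since $\lz Q$ can be covered by at most $(\lfloor\lz\rfloor+1)^n$ unit cubes (all with sides parallel to the axes, after an obvious translation argument), we get $\fint_{\lz Q}|f|\lesssim \sup_{|Q'|=1}\fint_{Q'}|f|\le\|f\|_{\mathrm{B}(\rn)}$, with the averaging smoothing out the $\lz^n$ factor — so the implied constant is genuinely $\lz$-free.

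For the seminorm part: fix $k\in\nn$ and a collection $\cf_{2^{-k}}=\{Q_{2^{-k}}(a_j)\}_{j\in J}$ of mutually disjoint $2^{-k}$-cubes with $\#J\le 2^{k(n-1)}$. Writing $g:=f(\lz\cdot)$, dilating each cube by $\lz$ turns $M(g,Q_{2^{-k}}(a_j))$ into $M(f,Q_{\lz 2^{-k}}(\lz a_j))$, and the dilated cubes $\{Q_{\lz 2^{-k}}(\lz a_j)\}_{j\in J}$ are still mutually disjoint but now have side length $\lz 2^{-k}$, which is too big to plug into $[f]_\cdot$ directly. The key move is to choose $m\in\nn$ with $2^{-m}\le \lz 2^{-k}\le 2^{-m+1}$ (possible only when $\lz2^{-k}$ is small; when it is $\gtrsim 1$ one falls back on the $L^1$-bound as in the proof of Lemma \ref{l-eqv}), partition each dilated cube into $\asymp (\lz 2^{-k}/2^{-m})^n \le 2^n$ subcubes of side length exactly $2^{-m}$, or rather cover it by a bounded number of $2^{-m}$-cubes, and estimate $M$ on the big cube by the sum of $M$'s on the small ones plus differences of averages, exactly in the spirit of the proof of Lemma \ref{l-eqv} and Proposition \ref{prop-new}. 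One then has to check the cardinality constraint: the total number of $2^{-m}$-subcubes produced is $\lesssim \#J \le 2^{k(n-1)}$, and since $2^{-m}\asymp\lz 2^{-k}\ge 2^{-k}$ we have $2^{m(n-1)}\lesssim 2^{k(n-1)}(\text{power of }\lz)$ in the wrong direction — so the subcubes must be regrouped (as in Lemma \ref{part} / Proposition \ref{prop-new}) into $O(1)$ families each of cardinality $\le 2^{m(n-1)}$, after which each family is admissible for $[f]_{2^{-m}}$ and one picks up the averaged-out normalization factor $2^{-m(n-1)}$ versus $2^{-k(n-1)}$; since $m\le k$, the ratio $2^{-k(n-1)}/2^{-m(n-1)}\le 1$, so this is harmless. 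The difference-of-averages terms are controlled by Proposition \ref{prop-new}(i) (applied to the relevant nested cubes) together with the $L^1$-bound, again with $n$-dependent but $\lz$-independent constants. Summing up and taking the supremum over $k$ yields $\sup_k[f(\lz\cdot)]_{2^{-k}}\lesssim\|f\|_{\mathrm{B}(\rn)}$, and combining with the $L^1$-part and Lemma \ref{l-eqv} gives the claim.

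**Main obstacle.** The delicate point is bookkeeping the cardinality and normalization constraints when passing from the dilated cubes of side $\lz2^{-k}$ to admissible families of $2^{-m}$-cubes: one must verify simultaneously that (a) each regrouped family has cardinality $\le 2^{m(n-1)}$ so that $[f]_{2^{-m}}$ applies, and (b) the accumulated constants — the number $O(1)$ of subcubes per cube, the number $O(1)$ of regrouped families from Lemma \ref{part}, the geometric covering of $\lz Q$ by unit cubes, and the ratio of normalization factors $2^{-k(n-1)}$ vs.\ $2^{-m(n-1)}$ — all stay bounded by a constant depending only on $n$, with no hidden $\lz$-dependence surviving. Exactly as the authors remark in the introduction, this is where the structure of $\mathrm{B}$ is genuinely more intricate than that of $\mathrm{BMO}$, and Proposition \ref{prop-new} is the tool that makes the difference-of-averages terms cooperate.
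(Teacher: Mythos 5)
Your $L^1$ part and the fallback case $\lz 2^{-k}\gtrsim 1$ are fine, but the core case contains a genuine bookkeeping error, and it sits exactly where the $\lz$-uniformity of the constant must be established. With $2^{-m}\le \lz 2^{-k}\le 2^{-m+1}$, your covering produces up to $2^n\#J\le 2^n2^{k(n-1)}$ cubes of side $2^{-m}$, whereas a family admissible for $[f]_{2^{-m}}$ may contain at most $2^{m(n-1)}\approx 2^{k(n-1)}\lz^{-(n-1)}$ cubes. Hence you need at least about $2^n\lz^{n-1}$ families, not $O(1)$; your two requirements ``$O(1)$ regrouped families'' and ``each family of cardinality $\le 2^{m(n-1)}$'' are incompatible as soon as $\lz^{n-1}$ is large, and Lemma \ref{part} cannot supply the regrouping you invoke (it enlarges the side length by one dyadic step and cuts the cardinality only by the single factor $2^{n-1}$). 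Saying afterwards that the ratio $2^{-k(n-1)}/2^{-m(n-1)}\le 1$ is ``harmless'' papers over precisely the cancellation that has to be exhibited: the correct accounting is $\sim\lz^{n-1}$ families, each contributing at most $2^{m(n-1)}[f]_{2^{-m}}$, multiplied by the original normalization $2^{-k(n-1)}$, which gives $(\lz 2^{m-k})^{n-1}\le 2^{n-1}$ — an $\lz$-free constant, but you neither state this nor prove the regrouping (disjointness of the $2^{-m}$-covers coming from different dilated cubes also needs an argument, since a dyadic $2^{-m}$-cube can be shared). In addition, your step ``oscillation on the big cube $\le$ sum of oscillations on the small covering cubes plus differences of averages, as in Lemma \ref{l-eqv}'' runs in the opposite direction to Lemma \ref{l-eqv} and needs its own (easy, but missing) chaining or enlargement argument; Proposition \ref{prop-new} is not the right tool for those difference terms.

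The detour through dyadic scales is also unnecessary, and avoiding it removes all of the above difficulties: $[f]_\sigma$ is defined for every $\sigma\in(0,1)$, so after the change of variables the dilated cubes $\{Q_{\ez\lz}(\lz a_j)\}_{j\in J}$ (still mutually disjoint, since dilation is a bijection) can be tested directly at scale $\ez\lz$ when $\ez\lz<1$. One simply splits the index set $J$, purely by counting, into $2^{L(n-1)}$ groups with $2^{L-1}<\lz\le 2^{L}$, each of cardinality $\le(\ez\lz)^{1-n}$, and the identity $\ez^{n-1}=\lz^{1-n}(\ez\lz)^{n-1}$ absorbs the number of groups, yielding the bound $2^{n-1}\|f\|_{\mathrm{B}(\rn)}$; when $\ez\lz\ge 1$ one uses the unit-cube $L^1$ bound as you do. This is the paper's argument, and it needs neither Lemma \ref{part} nor Proposition \ref{prop-new}.
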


\begin{proof}
For any cube $Q$, write $Q^\lz:=\{\lz x:\  x\in Q\}$, which is also a cube with the same center as that of $Q$ but of side length $\lz \ell(Q)$.  Let $L\ge0$ be the unique integer such that $2^{L-1}<\lz\le 2^{L}$.
When $|Q|=1$,  there exist $2^{Ln}$ cubes $\{Q_1, \ldots, Q_{2^{Ln}}\}$ with side length $1$ such that $Q^\lz \subset \cup_{i=1}^{2^{Ln}} Q_i$,
which implies that
\begin{align*}
\fint_Q |f(\lz x)|\,dx = \fint_{Q^\lz} |f(x)|\,dx \le \frac1{\lz^n} \sum_{i=1}^{2^{Ln}}\fint_{Q_i}|f(x)|\,dx\le 2^n \sup_{|Q|=1}\fint_Q|f(y)|\,dy,
\end{align*}
and hence
$$\sup_{|Q|=1}\fint_Q |f(\lz x)|\,dx\le 2^n \|f\|_{\mathrm{B}(\rn)}.$$
For any $\ez\in(0,1)$ and  $\cf_{\ez}:=\{Q_{\ez}(a_j)\}_{j\in J}$ being a collection
of mutually disjoint $\ez$-cubes in $\rn$ with $\#J\le \ez^{1-n}$, we have
\begin{align*}
\ez^{n-1} \sum_{j\in J} \fint_{Q_\ez(a_j)}|f(\lz x)-(f(\lz\cdot))_{Q_\ez(a_j)}|\,dx \le \ez^{n-1} \sum_{j\in J} \fint_{Q_{\ez\lz}(a_j\lz)}\fint_{Q_{\ez\lz}(a_j\lz)}|f(x)-f(y)|\,dx\,dy.
\end{align*}
When $\ez\lz\ge 1$,  similarly to the previous argument, we find that
\begin{align*}
\ez^{n-1} \sum_{j\in J} \fint_{Q_\ez(a_j)}|f(\lz x)-(f(\lz\cdot))_{Q_\ez(a_j)}|\,dx
&\le 2 \ez^{n-1} \sum_{j\in J}\fint_{Q_{\ez\lz}(a_j\lz)} |f(x)|\,dx\\
&\le  2\ez^{n-1} \sum_{j\in J} 2^n\sup_{|Q|=1}\fint_{Q}|f(x)|\,dx\le 2^{n+1} \|f\|_{\mathrm{B}(\rn)}.
\end{align*}
When $\ez\lz<1$, noticing that $\{Q_{\ez\lz}(a_j\lz)\}_{j\in J}$ are also mutually disjoint, we separate $J$
as the union of $\{J_1, \ldots, J_{2^{L(n-1)}}\}$ with each $\# J_i\le (\ez\lz)^{1-n}$ for any $i\in\{1,\ldots, 2^{L(n-1)}\}$, and
we then have
\begin{align*}
\ez^{n-1} \sum_{j\in J} \fint_{Q_\ez(a_j)}|f(\lz x)-(f(\lz\cdot))_{Q_\ez(a_j)}|\,dx
&= \ez^{n-1}  \sum_{i=1}^{2^{L(n-1)}} \sum_{j\in J_i} \fint_{Q_{\ez\lz}(a_j\lz)} |f(x)-f_{Q_{\ez\lz}(a_j\lz)}|\,dx\\
&\le \lz^{1-n}\sum_{i=1}^{2^{L(n-1)}} [f]_{\ez\lz}\le 2^{n-1} \|f\|_{\mathrm{B}(\rn)}.
\end{align*}
Summarizing all, we conclude the proof of Lemma \ref{l-dilation}.
\end{proof}

\begin{lemma}\label{lem1}
\begin{enumerate}
\item[{\rm(i)}] If $T_g[\mathrm{B}_c(\rn)]\subset \mathrm{B}(\rn)$ and $g(0)=0$,
then there exist a cube $Q\subset \rn$ and positive constants $C_1$
and $C_2$ such that $\| g\circ f\|_{\mathrm{B}(\rn)}\le C_2$
whenever $f\in \mathrm{B}_c(\rn)$ with $\supp f \subset Q$ and $\|f\|_{\mathrm{B}(\rn)}\le C_1.$

\item[{\rm(ii)}]  The conclusion in (i) is true for $\mathrm{B}(Q_0)$; that is, if $T_g[\mathrm{B}_c(Q_0)]\subset \mathrm{B}(Q_0)$ and $g(0)=0$, then there exist a cube $Q\subset Q_0$ and positive constants $C_1$
and $C_2$ such that $\| g\circ f\|_{\mathrm{B}(Q_0)}\le C_2$
whenever $f\in \mathrm{B}_c(Q_0)$ with $\supp f \subset Q$ and $\|f\|_{\mathrm{B}(Q_0)}\le C_1.$
 \end{enumerate}
\end{lemma}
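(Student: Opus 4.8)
The plan is to argue by contradiction via a gliding‑hump construction; by similarity I would carry it out only for part (i), part (ii) being identical after replacing $\mathrm{B}_c(\rn)$, $\|\cdot\|_{\mathrm{B}(\rn)}$ and $[\cdot]_{\ez}$ by $\mathrm{B}_c(Q_0)$, $\|\cdot\|_{\mathrm{B}(Q_0)}$ and $[\cdot]_{\ez,Q_0}$ and keeping all cubes inside $Q_0$ (there the $L^1$‑part of the norm is literally $\int_{Q_0}|\cdot|$, which only simplifies the argument). The negation of (i) reads: for every cube $Q$ and all $C_1,C_2\in(0,\infty)$ there is $f\in\mathrm{B}_c(\rn)$ with $\supp f\subset Q$, $\|f\|_{\mathrm{B}(\rn)}\le C_1$ and $\|g\circ f\|_{\mathrm{B}(\rn)}>C_2$. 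First I would fix, once and for all, mutually disjoint open cubes $\{Q_j\}_{j\in\nn}$ with $3Q_j\subset Q_0$ for all $j$, with the dilates $\{3Q_j\}_{j\in\nn}$ still mutually disjoint and with $\delta_j:=\ell(Q_j)\in(0,1)$ and $\delta_j\to0$. Assuming (i) fails and applying the negation to each $Q_j$ with $C_1:=2^{-j}$ and $C_2:=M_j$, where $M_j:=2^j\delta_j^{-1}$, I would obtain functions $f_j\in\mathrm{B}_c(\rn)$ with
$$\supp f_j\subset Q_j,\qquad\|f_j\|_{\mathrm{B}(\rn)}\le2^{-j},\qquad\|g\circ f_j\|_{\mathrm{B}(\rn)}>M_j,$$
and note that $M_j\to\infty$ and $\delta_jM_j=2^j\to\infty$.

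Next I would set $f:=\sum_{j\in\nn}f_j$. Since $\sum_j\|f_j\|_{\mathrm{B}(\rn)}\le\sum_j2^{-j}<\infty$, and $\mathrm{B}(\rn)$ is complete with $\mathrm{B}_c(\rn)$ a closed subspace, this series converges in $\mathrm{B}_c(\rn)$; hence $f\in\mathrm{B}_c(\rn)$ and $\supp f\subset\overline{Q_0}$. Because the supports $\supp f_j$ are mutually disjoint and $g(0)=0$, a pointwise check gives $g\circ f=\sum_{j\in\nn}g\circ f_j$, and therefore $|g\circ f|=\sum_{j\in\nn}|g\circ f_j|$. Since the hypothesis $T_g(\mathrm{B}_c(\rn))\subset\mathrm{B}(\rn)$ would force $g\circ f\in\mathrm{B}(\rn)$, it suffices to derive the contradiction $\|g\circ f\|_{\mathrm{B}(\rn)}=\infty$.

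To this end, for each $j$ the bound $\|g\circ f_j\|_{\mathrm{B}(\rn)}>M_j$ forces either $\sup_{|Q|=1}\int_Q|g\circ f_j|>M_j/2$ or $[g\circ f_j]_{\ez}>M_j/2$ for some $\ez\in(0,1)$, and I would split accordingly. In \emph{Case} (I) --- $\sup_{|Q|=1}\int_Q|g\circ f_j|>M_j/2$, or $[g\circ f_j]_{\ez}>M_j/2$ for some $\ez>\delta_j$ --- I would use that $g\circ f_j$ vanishes outside the $\delta_j$‑cube $Q_j$, so that at any scale $\ez>\delta_j$ only boundedly many (depending on $n$) of the disjoint $\ez$‑cubes in a competing family can meet $Q_j$, and on each of them $M(g\circ f_j,Q_\ez(a))\le2\fint_{Q_\ez(a)}|g\circ f_j|\le2\ez^{-n}\int_{Q_j}|g\circ f_j|$; this gives $[g\circ f_j]_{\ez}\ls\ez^{-1}\int_{Q_j}|g\circ f_j|$, and in either alternative I conclude $\int_{Q_0}|g\circ f_j|=\int_{Q_j}|g\circ f_j|\gs\delta_jM_j$. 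In \emph{Case} (II) --- $[g\circ f_j]_{\ez_j}>M_j/2$ for some $\ez_j\le\delta_j$, realized by a family $\{Q_{\ez_j}(a_i)\}_{i\in I_j}$ of disjoint $\ez_j$‑cubes with $\#I_j\le\ez_j^{1-n}$ --- I would discard the cubes disjoint from $Q_j$ (on which $g\circ f_j\equiv0$) and use that an $\ez_j$‑cube meeting the $\delta_j$‑cube $Q_j$ with $\ez_j\le\delta_j$ must lie inside $3Q_j$, so that I may assume all $Q_{\ez_j}(a_i)\subset3Q_j$ while still $\ez_j^{n-1}\sum_{i\in I_j}M(g\circ f_j,Q_{\ez_j}(a_i))>M_j/2$.

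Finally, at least one of the two cases occurs for infinitely many $j$, and in both cases the corresponding information survives in $g\circ f$: if Case (I) occurs infinitely often, then $|g\circ f|=\sum_m|g\circ f_m|$, disjointness of the supports and $\delta_jM_j\to\infty$ give $\sup_{|Q|=1}\int_Q|g\circ f|\ge\int_{Q_0}|g\circ f|=\sum_m\int_{Q_0}|g\circ f_m|=\infty$; if Case (II) occurs infinitely often, then for each such $j$ the functions $f_m$ with $m\ne j$ vanish on $3Q_j$ (as $\supp f_m\subset Q_m\subset3Q_m$ and $3Q_m\cap3Q_j=\emptyset$), hence $f=f_j$ on $3Q_j$, so $g\circ f=g\circ f_j$ on every $Q_{\ez_j}(a_i)\subset3Q_j$ and the family $\{Q_{\ez_j}(a_i)\}_{i\in I_j}$ is admissible in the definition of $[g\circ f]_{\ez_j}$, yielding $[g\circ f]_{\ez_j}>M_j/2$ and thus $\sup_{0<\ez<1}[g\circ f]_{\ez}=\infty$. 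Either way $\|g\circ f\|_{\mathrm{B}(\rn)}=\infty$, contradicting $g\circ f\in\mathrm{B}(\rn)$; this proves (i), and (ii) follows in the same manner. I expect the main obstacle to be Case (I), i.e.\ the reason behind the choice $M_j=2^j\delta_j^{-1}$: unlike for $\mathrm{BMO}$, a large value of $[g\circ f_j]_{\ez}$ at scales $\ez$ exceeding $\ell(Q_j)$ cannot be localized near $\supp f_j$ and so need not transfer to $g\circ f$ through the seminorm; one must instead show that such large scales are only possible when the local $L^1$‑mass of $g\circ f_j$ is large, and that this mass does add up over $j$ because the supports are disjoint. A secondary, routine point is the completeness of $\mathrm{B}(\rn)$ and $\mathrm{B}(Q_0)$, needed to make sense of the glued function $f$.
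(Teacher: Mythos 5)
Your argument is correct, but it takes a genuinely different route from the paper. The paper localizes through its pointwise–multiplier result (Proposition \ref{p-pm}, itself resting on Proposition \ref{prop-new} and the grouping Lemma \ref{part}): it fixes bump functions $\phi_j$ with multiplier constants $\gamma_j$, runs the contradiction with $C_2=j\gamma_j$, and then transfers the largeness of $\|g\circ f_j\|_{\mathrm{B}(\rn)}$ to $g\circ f$ via $(g\circ f)\phi_j=g\circ f_j$ and $\|(g\circ f)\phi_j\|_{\mathrm{B}(\rn)}\le\gamma_j\|g\circ f\|_{\mathrm{B}(\rn)}$. You instead transfer the largeness by dissecting the $\mathrm{B}$-norm of $g\circ f_j$ directly: small-scale seminorm mass ($\ez_j\le\delta_j$) survives in $g\circ f$ because $f=f_j$ on $3Q_j$ and the competing family can be pruned to cubes inside $3Q_j$ while remaining admissible; coarse-scale seminorm mass ($\ez>\delta_j$) and the $L^1$ part are both dominated, for a function supported in a $\delta_j$-cube, by $\delta_j^{-1}\int_{Q_j}|g\circ f_j|$ (only boundedly many disjoint $\ez$-cubes meet $Q_j$, each average being at most $\ez^{-n}\int_{Q_j}|g\circ f_j|$), and this $L^1$ mass then accumulates in $\int_{Q_0}|g\circ f|$ by disjointness of supports and $g(0)=0$ --- this is exactly the role your calibration $M_j=2^j\delta_j^{-1}$ plays, analogous to the paper's $j\gamma_j$. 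The trade-off: your proof is self-contained at this point (no multiplier proposition, hence no grouping machinery needed for this lemma) but more hands-on, whereas the paper's multiplier lemma is more modular and the same localization device is reused later (e.g.\ in the proof of Lemma \ref{l-unif1}). Both proofs rely identically on the routine facts you flag --- convergence of $\sum_j f_j$ in $\mathrm{B}(\rn)$ (resp.\ $\mathrm{B}(Q_0)$), membership of the sum in $\mathrm{B}_c$, and the a.e.\ identification of the limit with the pointwise sum --- which the paper also treats without detail, so no gap there relative to the paper's own standard.
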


\begin{proof}
By similarity, we only prove (i). We use the method of reduction to absurdity. Assume that the conclusion (i) of this lemma is false, that is,
for any cube $Q\subset\rn$ and any positive constants $C_1$
and $C_2$, there exists   $f\in \mathrm{B}_c(\rn)$ with $\supp f \subset Q$ and $\|f\|_{\mathrm{B}(\rn)}\le C_1$
such that $\| g\circ f\|_{\mathrm{B}(\rn)}>C_2$.

Let $\{Q_j\}_{j\in\nn}$ be  mutually disjoint  cubes in $\rn$.
Pick a sequence $\{\phi_j\}_{j\in\nn}\subset C_c^\fz(\rn)$ satisfying that, for any $j\in\nn$, $\phi_j\equiv 1$ on $\frac12 Q_j$
and $\phi_j\equiv 0$ out of $Q_j$. For any $j\in\nn$, by Proposition \ref{p-pm}, there exists a positive number $\gamma_j$ such that
\begin{equation}\label{x1}
\|\phi_j h\|_{\mathrm{B}(\rn)}\le \gz_j \|h\|_{\mathrm{B}(\rn)},\quad \forall\ h\in \mathrm{B}(\rn).
\end{equation}

Fix $j\in\nn$. If we take $C_1=2^{-j}$ and $C_2=j\gz_j$, then there exists $f_j\in \mathrm{B}_c(\rn)$
with $\supp f_j\subset \frac12 Q_j$ and $\|f_j\|_{\mathrm{B}(\rn)}\le 2^{-j}$ such that
$\| g\circ f_j\|_{\mathrm{B}(\rn)}>j\gz_j.$
Define $f:=\sum_{j\in\nn} f_j$, which converges in $\mathrm{B}(\rn)$.
Indeed, noticing that $f\in \mathrm{B}_c(\rn)$ and
$\|f\|_{\mathrm{B}(\rn)}\le \sum_{j\in\nn} \|f_j\|_{\mathrm{B}(\rn)}\le 1$, we then have $f(x)=\sum_{j\in\nn} f_j(x)$
for almost every $x\in\rn$, which implies that
$$f(x)=
\begin{cases}
f_j(x)\qquad &\textup{for almost every }\; x\in \frac12 Q_j,\\
0\qquad &\textup{for almost every }\; x\in Q_j\setminus(\frac12 Q_j).
\end{cases}$$
Further, from $g(0)=0$, we deduce that
$(g\circ f)\phi_j=g\circ f_j$ holds true almost everywhere.
By the assumption $T_g(\mathrm{B}_c(\rn))\subset \mathrm{B}(\rn)$,
we  know that $g\circ f\in \mathrm{B}(\rn)$. However, it follows from \eqref{x1}  that
$$j\gz_j< \| g\circ f_j\|_{\mathrm{B}(\rn)}=\|(g\circ f)\phi_j\|_{\mathrm{B}(\rn)}\le \gz_j \|g\circ f\|_{\mathrm{B}(\rn)};$$
that is, $\|g\circ f\|_{\mathrm{B}(\rn)}>j$ for any $j\in \nn$, which is a contradiction.
This finishes the proof of (i) and hence of Lemma \ref{lem1}.
\end{proof}

\begin{lemma}\label{bmof}
For any integer $j\ge 3$, there exists a non-negative function $\theta_j\in C_c^\fz(\rn)$ such that
$\theta_j(x)=1$ if $|x|\le \frac1j$, $\theta_j(x)=0$ if $|x|\ge \frac12$, $0\le \theta_j\le 1$ and  $\|\theta_j\|_{\mathop\mathrm{BMO}\,(\rn)}\le \wz C[\log_2j]^{-1}$ for some positive constant $\wz C$ independent of $j$.
\end{lemma}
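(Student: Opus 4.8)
The statement to prove is Lemma \ref{bmof}: for each integer $j\ge 3$ one constructs a smooth bump $\theta$, equal to $1$ on $\{|x|\le 1/j\}$, supported in $\{|x|\le 1/2\}$, with $0\le\theta\le1$ and $\|\theta\|_{\mathop\mathrm{BMO}(\rn)}\ls [\log_2 j]^{-1}$. The natural candidate is a mollified, truncated logarithm: set $\ell:=\lfloor\log_2 j\rfloor$ and define $\theta$ essentially as $\min\{1,\ \ell^{-1}\log_2(1/(2|x|))\}$ composed with a smooth cutoff, then mollify slightly so it lands in $C_c^\fz$. Concretely, I would fix a smooth nonincreasing $\eta\colon[0,\fz)\to[0,1]$ with $\eta=1$ on $[0,1]$ and $\eta=0$ on $[2,\fz)$, and take $\theta(x)=\eta\!\big(\tfrac1\ell\,\psi(|x|)\big)$ for a suitable smooth function $\psi$ that behaves like $-\log_2(2|x|)$ on the annulus $1/j\lesssim|x|\lesssim 1/2$, equals $\ell$ near $|x|=1/j$ (so $\theta=1$ there) and vanishes near $|x|=1/2$ (so $\theta=0$ there). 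This gives $\theta\in C_c^\fz(\rn)$, $0\le\theta\le1$, and the two value constraints by construction.

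\textbf{Key steps.} First I would record the standard fact that, up to a dimensional constant, $\|\theta\|_{\mathop\mathrm{BMO}(\rn)}\sim\sup_Q M(\theta,Q)$, and that since $\theta$ is radial, Lipschitz away from the origin, and supported in $\{|x|\le 1/2\}$, it suffices to bound $M(\theta,Q)$ for cubes $Q$ meeting the support. Second, I would split into two regimes. For a cube $Q$ with side length $\ell(Q)\ge c\,\dist(Q,0)$ (i.e. $Q$ is ``large relative to its distance from the origin,'' including all cubes near the origin), I estimate $M(\theta,Q)\le 2\fint_Q|\theta-c_Q|$ for any constant $c_Q$; choosing $c_Q$ cleverly and using that $\theta$ only takes values in $[0,1]$ on a region where the log has total variation $O(1/\ell)$, one gets $M(\theta,Q)\ls 1/\ell$. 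The cleanest way: on such a cube the oscillation of $\theta$ is controlled by $\ell^{-1}$ times the oscillation of $\log_2(1/|x|)$ over a cube whose side length is comparable to its distance from the origin, and that oscillation of the logarithm over such a dyadic-type cube is $O(1)$ — this is exactly the classical computation showing $\log|x|\in\mathop\mathrm{BMO}$. For the complementary regime $\ell(Q)<c\,\dist(Q,0)$, the gradient bound $|\nabla\theta(x)|\ls \ell^{-1}/|x|\ls \ell^{-1}/\dist(Q,0)$ holds on $Q$, so $M(\theta,Q)\ls \ell(Q)\|\nabla\theta\|_{L^\fz(Q)}\ls \ell^{-1}\,\ell(Q)/\dist(Q,0)\ls\ell^{-1}$. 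Third, I would note that when $Q$ is far from the support ($\dist(Q,0)\ge 1/2$ and $Q$ large) the mean oscillation contribution is trivially controlled, and combine the cases to conclude $\sup_Q M(\theta,Q)\ls \ell^{-1}\sim[\log_2 j]^{-1}$, with the implied constant depending only on $n$ and the fixed profile $\eta$.

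\textbf{Main obstacle.} The routine part is the two-regime cube estimate; the only genuinely delicate point is organizing the ``large cube'' case so that the bound is uniformly $O(1/\ell)$ rather than $O(1)$ — i.e. making precise that the logarithmic profile contributes its full $\mathop\mathrm{BMO}$ norm $O(1)$ only because it has been flattened by the factor $1/\ell$, and that the truncations at $|x|\sim1/j$ and $|x|\sim1/2$ (which is where smoothness is glued in) do not create extra oscillation beyond $O(1/\ell)$. Handling the mollification cleanly — choosing the mollification scale small compared to $1/j$ so that all the estimates above survive with the same constants — is a standard but slightly fussy point that I would dispatch by simply specifying $\psi$ to be smooth from the outset (interpolating smoothly between the constant value $\ell$ for $|x|\le 1/j$, the logarithm on the bulk annulus, and $0$ for $|x|\ge 1/2$) so that no separate mollification is needed. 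I expect the proof to occupy under a page, with the bulk being the explicit verification of the gradient bound $|\nabla\theta(x)|\ls\ell^{-1}|x|^{-1}$ and the oscillation bound on comparable cubes.
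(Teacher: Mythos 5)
Your construction is the one the paper itself uses: the paper merely replaces the function $\theta_j$ of \cite[Lemma~8]{bls} by a smoothed truncated logarithm normalized by $\log_2\frac j2$ and then invokes the argument of \cite{bls} verbatim, whereas you supply the $\mathop\mathrm{BMO}\,$ estimate directly, so the two proofs differ only in that yours is self-contained. Your estimate is sound: writing $\theta=G(\log_2|x|)$ with $G$ smooth, equal to $1$ on $(-\fz,-\log_2 j]$, equal to $0$ on $[-1,\fz)$, $0\le G\le1$ and $\|G'\|_{L^\fz(\rr)}\ls (\log_2 j)^{-1}$ (possible because the transition interval has length $\log_2\frac j2\sim\log_2 j$ for $j\ge3$), your two regimes --- cubes with $\ell(Q)\gs\dist(Q,0)$, handled by the classical mean-oscillation computation for $\log|x|$ carried along with the factor $(\log_2 j)^{-1}$, and cubes with $\ell(Q)\ll\dist(Q,0)$, handled by $|\nabla\theta(x)|\ls(\log_2 j)^{-1}|x|^{-1}$ --- do give $\sup_Q M(\theta,Q)\ls(\log_2 j)^{-1}$; in fact the single inequality $M(G\circ f,Q)\le 2\,{\rm Lip}(G)\,M(f,Q)$ applied to $f=\log_2|\cdot|\in\mathop\mathrm{BMO}\,(\rn)$ makes the case distinction unnecessary, which is essentially how \cite{bls} concludes. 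One slip you should correct: with your nonincreasing $\eta$ ($\eta\equiv1$ on $[0,1]$, $\eta\equiv0$ on $[2,\fz)$) and $\psi\approx-\log_2(2|x|)$ (so $\psi\approx\ell$ near $|x|=\frac1j$ and $\psi\approx0$ near $|x|=\frac12$), the composition $\eta(\psi/\ell)$ equals $1$ at $|x|=\frac12$ and indeed on the whole annulus, so the support condition fails as written; the outer profile must be nondecreasing (plateau $0$ for arguments $\le0$, plateau $1$ for arguments $\ge1$), i.e. exactly the smoothed $\min\{1,\max\{0,\ell^{-1}\log_2\frac1{2|x|}\}\}$ that you announce at the start. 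This is a purely cosmetic orientation fix (one must also arrange the plateau to begin at $|x|=\frac1j$ rather than at $2/j\cdot 2^{\,\ell-\log_2 j}$, which your modification of $\psi$ near $\frac1j$ already does) and it does not affect any of the estimates.
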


\begin{proof}
The proof is similar to that of \cite[Lemma~8]{bls}. Indeed, we only need to replace the  definition of
$\theta_j$ in \cite[p.\,535]{bls} by
$$\theta_j(x):=\frac{u\lf(\log_2 (2|x|)\r)}{\log_2\frac j2},\qquad \forall\, x\in\rn,$$
where $u$ is a smooth function on $\rr$
with $0\le u\le 1$, $u\equiv 1$ on $(-\fz,-1]$ and $u\equiv0$ on $[0,\fz)$.
The remaining part of the argument is the same as that of \cite[Lemma~8]{bls}, which completes the proof of Lemma \ref{bmof}.
\end{proof}

We also need the following conclusion, which is inspired by \cite{b93} and \cite[Lemma 2]{bls}.

\begin{lemma}\label{l-he}
\begin{enumerate}

\item[{\rm(i)}] Assume that   there exist positive constants $c_1, c_2$ and $c_3\in[0,\fz)$ and a cube $K\subset\rn$ such that
\begin{align}\label{con}
\sup_{\ez\in(0,c_2)}[g\circ f]_{\ez}=\sup_{\ez\in(0,c_2)}\sup_{\cf_\ez}\lf\{\ez^{n-1}\sum_{j\in J} M(g\circ f,Q_\ez(a_j))\r\}\le c_3
\end{align}
for any function $f\in C_c^\fz(\rn)$ with $\supp f\subset K$ and $\|f\|_{\mathrm{B}(\rn)}\le c_1$,
where the supremum is taken over all $\cf_\ez:=\{Q_\ez(a_j)\}_{j\in J}$ of mutually disjoint
$\ez$-cubes in $\rn$ with cardinality $\#\cf_\ez=\#J\le 1/\ez^{n-1}$.
Then there exists a positive constant  $m$, independent of $g$ and $f$,  such that
\begin{align}\label{lip}
\sup\lf\{|g(a)-g(b)|:\ a,\,b\in\cc,\ |a-b|\le mc_1\r\}\le 4^{n+1}c_3.
\end{align}

\item[{\rm(ii)}] The corresponding conclusion of (i)  for $\mathrm{B}(Q_0)$ is also true; namely, if
$[g\circ f]_{\ez}$ (resp., $\rn$)   in (i) is replaced by $[g\circ f]_{\ez,Q_0}$ (resp., $Q_0$), then \eqref{lip} remains true.

\end{enumerate}
\end{lemma}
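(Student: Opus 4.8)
The plan is to prove Lemma \ref{l-he}(i) by testing the hypothesis \eqref{con} against a family of explicit functions built from the $\mathop\mathrm{BMO}\,$-small cutoff $\theta$ produced in Lemma \ref{bmof}, so that the oscillation of $g\circ f$ over a single small cube controls $|g(a)-g(b)|$ for nearby points $a,b$. Fix $a,b\in\cc$ with $|a-b|$ small, and consider the function $f(x):=b+(a-b)\theta_R(x)$, where $\theta_R(x):=\theta(x/R)$ is a rescaling of the function from Lemma \ref{bmof}, chosen so that $f$ is supported in $K$ (shifting the center if necessary), equals $a$ on a ball of radius $R/j$ and equals $b$ outside the ball of radius $R/2$. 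The first task is to estimate $\|f\|_{\mathrm{B}(\rn)}$: since $\theta_R$ has the same $\mathop\mathrm{BMO}\,$ norm as $\theta$ (BMO is dilation invariant) and $\mathop\mathrm{BMO}\,(\rn)\subset\mathrm{B}(\rn)$ with $[\,\cdot\,]_\ez\le\|\cdot\|_{\mathop\mathrm{BMO}\,}$, together with the trivial $L^1$-on-unit-cubes bound since $\theta_R$ is bounded and compactly supported, one gets $\|f\|_{\mathrm{B}(\rn)}\le C|a-b|[\log_2 j]^{-1}+C|b|\cdot(\text{small})$. Absorbing constants, for a suitable choice of $j$ (depending on $|a-b|/c_1$, roughly $\log_2 j\sim |a-b|/(mc_1)$) we can guarantee $\|f\|_{\mathrm{B}(\rn)}\le c_1$, which is exactly the constant $m$ appearing in \eqref{lip}. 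A minor point: Lemma \ref{bmof} gives $\theta\in C_c^\fz$, and $f$ so defined is in $C_c^\fz(\rn)$ with support in $K$, so the hypothesis \eqref{con} applies to it.

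Next I would extract the pointwise information. On the cube $Q^\ast$ of side length comparable to $R$ centered so as to contain both the region where $g\circ f=g(a)$ (the ball of radius $R/j$, of measure $\sim(R/j)^n$) and a substantial chunk of the region where $g\circ f=g(b)$, a lower bound for $M(g\circ f,Q^\ast)$ in terms of $|g(a)-g(b)|$ follows from the elementary fact that if a function takes value $\alpha$ on a set of relative measure $\ge\lambda$ and value $\beta$ on a set of relative measure $\ge\lambda$ in a cube, then its mean oscillation is $\ge\lambda|\alpha-\beta|/2$ or so. Here the subtlety is that the "$g(a)$ part" has very small relative measure $\sim j^{-n}$, which would kill the bound — so instead one should choose the test cube to be a small $\ez$-cube of side length $\ez\sim R/j$ that sits near the boundary between $\{f=a\}$ and $\{f=b\}$, where $g\circ f$ genuinely takes both values $g(a)$ and $g(b)$ each on a fixed fraction of the cube. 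Then $M(g\circ f,Q_\ez)\gtrsim |g(a)-g(b)|$ with a dimensional constant. Taking the single-cube collection $\cf_\ez=\{Q_\ez\}$ in \eqref{con} (legal since $\#J=1\le\ez^{1-n}$) yields $\ez^{n-1}M(g\circ f,Q_\ez)\le c_3$; but $\ez^{n-1}$ is small, which is the wrong direction. The fix is to use many parallel translated copies: build $f$ as a sum of $\sim\ez^{1-n}$ bumps of the above type placed on disjoint $\ez$-cubes tiling a fixed cube $K$, so that $\cf_\ez$ has the full allowed cardinality $\#J\sim\ez^{1-n}$ and $\ez^{n-1}\sum_j M(g\circ f,Q_\ez(a_j))\gtrsim |g(a)-g(b)|$. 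One checks $\|f\|_{\mathrm{B}(\rn)}$ is still $\le c_1$ because the bumps are disjointly supported and the $\mathrm{B}$-seminorm of a disjoint sum scales like the maximum, not the sum (this is where the structure of $[\,\cdot\,]_\ez$ as a supremum over cube families, with the normalization $\ez^{n-1}\#J\le 1$, is used). Combining gives $|g(a)-g(b)|\le 4^{n+1}c_3$, with the numerical constant tracked through the two elementary estimates.

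The main obstacle I anticipate is the bookkeeping in the last paragraph: arranging the test function $f$ so that it simultaneously (a) lies in $C_c^\fz$ with support in the fixed cube $K$, (b) has $\mathrm{B}(\rn)$-norm at most $c_1$ with the logarithmic gain from Lemma \ref{bmof} providing exactly the right threshold constant $m$, and (c) forces $\ez^{n-1}\sum_{j}M(g\circ f,Q_\ez(a_j))$ to be bounded below by a dimensional constant times $|g(a)-g(b)|$ at the appropriate scale $\ez$. The tension is that the definition of $[\,\cdot\,]_\ez$ involves the weight $\ez^{n-1}$ together with the cardinality cap $\#J\le\ez^{1-n}$, so one must use close to the maximal number of disjoint cubes, and each must carry a genuine oscillation of $g\circ f$ — meaning the construction cannot rely on a single bump but needs a whole periodic array, and the $\mathrm{B}$-norm control of that array is exactly the place where one needs the fact (implicit in the definition) that disjointly supported pieces do not accumulate in the $\mathrm{B}$-seminorm. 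For part (ii), the argument is identical after replacing $\rn$ by $Q_0$ throughout; one only needs $K\subset Q_0$ and the array of bumps to fit inside $Q_0$, which is harmless by taking $K$ and the bumps small, and Lemma \ref{bmof} applies verbatim since its $\theta$ is compactly supported in a small ball which can be placed inside $Q_0$.
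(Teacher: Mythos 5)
Your overall strategy is in the right spirit (force oscillation between $g(a)$ and $g(b)$ on close to $\ez^{1-n}$ disjoint $\ez$-cubes so that the weight $\ez^{n-1}$ does not kill the lower bound), and this is indeed how the paper argues. But there is a genuine gap at the crucial step: producing a test function that is admissible for \eqref{con}, i.e.\ in $C_c^\fz(\rn)$, supported in $K$, and with $\|f\|_{\mathrm{B}(\rn)}\le c_1$ \emph{uniformly in $a,b$}, where in \eqref{lip} the points $a,b$ may have arbitrarily large modulus and only $|a-b|$ is small. Your first-paragraph function $f=b+(a-b)\theta_R$ equals $b$ at infinity, so it is not compactly supported and \eqref{con} does not apply to it; any cutoff reintroduces a transition of size $|b|$. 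Your second-paragraph fix, an array of $\sim\ez^{1-n}$ disjointly supported bumps each of which takes the values $a$ and $b$ inside its own $\ez$-cube and vanishes outside it, does not repair this: each such bump must pass from $0$ to $b$ within its cube, and taking one $\ez$-cube per bump straddling these transition shells (cardinality $\sim\ez^{1-n}$ is allowed) gives $[f]_\ez\gtrsim \ez^{n-1}\cdot\ez^{1-n}\cdot|b|\sim|b|$. So the heuristic that ``the $\mathrm{B}$-seminorm of a disjoint sum scales like the maximum'' is not only unproved, it is insufficient: the maximum itself is of order $\max\{|a|,|b|\}$, not $|a-b|$, and the hypothesis $\|f\|_{\mathrm{B}(\rn)}\le c_1$ fails for large $a,b$. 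Your construction therefore only yields the oscillation estimate for $a,b$ in a bounded region, which is strictly weaker than \eqref{lip}. A related symptom is your choice $\log_2 j\sim|a-b|/(mc_1)$: the logarithmic gain from Lemma \ref{bmof} is needed to beat the \emph{size} of the values, not their difference.

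The paper's proof resolves exactly this tension by separating the two roles. After reducing to $K=Q_0$ (translation invariance plus Lemma \ref{l-dilation}) and to $g(0)=0$, it sets
$f=(b-a)\sum_{i=1}^{2^{j(n-1)}}\phi\lf(2^{j+1}(\cdot-k_i)\r)+a\theta$,
where $\theta$ is the single log-flattened cutoff of Lemma \ref{bmof} with $\theta\equiv1$ on a ball of radius $1/j$, and all $2^{j(n-1)}$ small cubes $Q_i$ of side $2^{-j}$ (and the bumps $\phi(2^{j+1}(\cdot-k_i))$) are placed \emph{inside} the set where $\theta\equiv1$. Thus the only large-amplitude transition is the single slow one of $a\theta$, whose $\mathrm{B}$-norm is at most $\wz C|a|/\log_2 j<\az_1c_1/2$ by choosing $j$ large depending on $|a|$ (and $2^{-j}<\az_2c_2$ to respect the scale restriction $\ez<c_2$, a point your sketch also leaves implicit), while the many fine bumps have amplitude only $|b-a|\le\az_1c_1/6$ and contribute at most $3|b-a|$ to the norm. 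On each $Q_i$ one then has $g\circ f\equiv g(b)$ on $(\frac14Q_i)\setminus(\frac18Q_i)$ and $g\circ f\equiv g(a)$ on $Q_i\setminus(\frac12Q_i)$, so $|g(a)-g(b)|\le 4^{n+1}M(g\circ f,Q_i)$, and summing over the $2^{j(n-1)}$ cubes against the weight $2^{-j(n-1)}$ gives \eqref{lip} via \eqref{con}. To complete your argument you would need to replace your disjoint full-amplitude bumps by this ``one slow background plus many small-amplitude wiggles'' design (or prove a norm bound for your array that is uniform in $|a|,|b|$, which, as noted, is false).
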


\begin{proof}
First, let us prove (i). Noticing that the supremum in \eqref{con} and \eqref{lip} are invariant after modulus of constants.
Without loss of generality, we may assume that $g(0)=0$; otherwise we may use $\wz g:=g-g(0)$ instead of $g$.

Observe that the norm $\|\cdot\|_{\mathrm{B}(\rn)}$ and  the term in the left-hand side of \eqref{con} are translation invariant.
By these and Lemma \ref{l-dilation}, we can assume that $K=Q_0$ via replacing $c_1$ and $c_2$
by $\az_1c_1$ and $\az_2c_2$ for some positive constants $\az_1$ and $\az_2$ depending only on $K$.
Let $a$, $b\in\cc$ satisfy
\begin{align}\label{ab}
|a-b|\le \frac{\az_1c_1}6.
\end{align}
With $\wz C$  as in Lemma \ref{bmof}, we pick an integer $j\ge 3$ satisfying
$$2^{-j}<\az_2c_2\qquad\textup{and}\qquad \frac1{\log_2j}<\frac{\az_1c_1}{2\wz C(|a|+1)}.$$
We also assume that $j$ is chosen large enough
such that the ball $B(\vec{0}_n, \frac1j)$ contains more than $2^{j(n-1)}$ disjoint $2^{-j}$-cubes, here and hereafter,  $\vec{0}_n$ denotes the origin of $\rn$
and, for any $x\in\rn$ and $r\in(0,\fz)$,  $B(x,r):=\{y\in\rn:\,|y-x|<r\}$.
Applying $\mathop\mathrm{bmo}\,(\rn) \hookrightarrow \mathrm{B}(\rn)$ and
Lemma \ref{bmof} with a translation, we know that there exists a function $\theta_j\in C_c^\fz(\rn)$ such that
$\supp\theta_j\subset Q_0$, $ \theta_j\equiv 1$ on $B((\frac12,\ldots,\frac12), \frac1j)$,  and
\begin{align}\label{est1}
|a|\,\|\theta_j\|_{\mathrm{B}(\rn)}\le |a|\,\|\theta_j\|_{\mathop\mathrm{bmo}\,(\rn)}\le   \frac{ \wz C |a|}{\log_2j} <\frac{\az_1c_1}2.
\end{align}

Based on the choice of $j$ as above, the ball $B((\frac12,\ldots,\frac12), \frac1j)$ contains more than $2^{j(n-1)}$ disjoint $2^{-j}$-cubes.
We select $2^{j(n-1)}$ such cubes, denoted by $\{Q_i\}_{i=1}^{2^{j(n-1)}}$.
Then $\theta_j\equiv1$ on all such $Q_i$ with $i\in\{1,\ldots,2^{j(n-1)}\}$.
Denote by $k_{i}$ the lower-left-corner point of $\frac14Q_i$.
Then $$\frac14Q_i=k_i+(0,2^{-j-2})^n.$$
Choose $\phi\in C_c^\fz(\rn)$ satisfying
$\supp\phi\subset Q_0$, $\phi\equiv1$ on $(1/4,1/2)^n$ and $0\le\phi\le 1$.
Define
$$f(x):=(b-a)\sum_{i=1}^{2^{j(n-1)}}\phi\lf(2^{j+1}(x-k_i)\r)+a\theta_j(x),\quad\forall\; x\in\rn.$$
Then $f\in C_c^\fz(\rn)$ and $\supp f \subset Q_0$. Moreover, for any $i\in\{1,\ldots, 2^{j(n-1)}\}$,
$f\equiv b$ on $(\frac14Q_i)\setminus (\frac18 Q_i)$ and $f\equiv a$ on $Q_i\setminus (\frac12Q_i)$,
which consequently implies that $g\circ f\equiv g(b)$ on $(\frac14Q_i)\setminus (\frac18 Q_i)$ and $g\circ f\equiv g(a)$ on $Q_i\setminus ( \frac12Q_i)$.
Noticing that $\supp \phi(2^{j+1}(\cdot-k_i)) \subset Q_i$ and $\{Q_i\}_{i=1}^{2^{j(n-1)}}$ are mutually disjoint,
we apply \eqref{est1} and \eqref{ab}
to deduce that
\begin{align*}
\|f\|_{\mathrm{B}(\rn)}
&\le |b-a| \lf\|\sum_{i=1}^{2^{j(n-1)}}\phi(2^{j+1}(\cdot-k_i))\r\|_{\mathrm{B}(\rn)}
+|a|\|\theta_j\|_{\mathrm{B}(\rn)}\\
&\le 3|b-a| \lf\|\sum_{i=1}^{2^{j(n-1)}}\phi(2^{j+1}(\cdot-k_i))\r\|_{L^\fz(\rn)}
+|a|\|\theta_j\|_{\mathrm{B}(\rn)}< 3|b-a|+\frac{\az_1c_1}2<\az_1c_1.
\end{align*}
Further, by the above discussion, \eqref{con} and the fact $g(0)=0$,
we conclude that
\begin{align*}
&|g(b)-g(a)|\\
&\quad\le   2^{-j(n-1)}\sum_{i=1}^{2^{j(n-1)}}\lf|\fint_{(\frac14Q_i)\setminus (\frac18 Q_i)}g \circ f(x)\,dx-\fint_{Q_i\setminus (\frac12Q_i)} g\circ f(x)\,dx\r|\\
&\quad\le 2^{-j(n-1)}\sum_{i=1}^{2^{j(n-1)}}\lf[\fint_{(\frac14Q_i)\setminus (\frac18 Q_i)}|g \circ f(x)-(g \circ f)_{Q_i}|\,dx
+\fint_{Q_i\setminus (\frac12Q_i)} |g\circ f(x)-(g \circ f)_{Q_i}|\,dx\r]\\
&\quad\le 2^{-j(n-1)}\sum_{i=1}^{2^{j(n-1)}} \lf[\frac1{4^{-n}-8^{-n}}+\frac1{1-2^{-n}}\r]
\fint_{Q_i}|g \circ f(x)-(g \circ f)_{Q_i}|\,dx\\
&\quad\le 4^{n+1} 2^{-j(n-1)}\sum_{i=1}^{2^{j(n-1)}} M(g\circ f, Q_i)\le 4^{n+1}c_3.
\end{align*}
This proves the desired conclusion of (i) with $m=1/6$.

The proof of (ii) is similar to that of (i), so we  omit the details here.
This finishes the proof of Lemma \ref{l-he}.
\end{proof}

We are now ready to prove the remainder case of Theorem \ref{t-bmo}.

\begin{proof}[Proof of Theorem \ref{t-bmo}: (iii) $\Longleftrightarrow$ (i) $\Longleftrightarrow$ (v)]
The implications (i) $\Longrightarrow$ (iii) and (i) $\Longrightarrow$ (v) follow from the previous proved equivalence of
Theorem \ref{t-bmo} and the trivial facts (ii) $\Longrightarrow$ (iii) and (iv) $\Longrightarrow$ (v).

Conversely, assume that (iii) or (v) holds true. Via subtracting $g(0)$ if necessary, we may assume that $g(0)=0$.
Then, by Lemmas \ref{lem1} and \ref{l-he}, we conclude that $g$ satisfies Lemma \ref{eqthm1}(b), and
hence (i) holds true. This proves (iii) $\Longrightarrow$ (i) and (v) $\Longrightarrow$ (i),
and then finishes the proof of Theorem \ref{t-bmo}.
\end{proof}

\begin{remark}
In  Theorem \ref{t-bmo}, by the trivial facts (ii) $\Longrightarrow$ (iii) and (iv) $\Longrightarrow$ (v), the proofs of (iii) $\Longleftrightarrow$ (i) $\Longleftrightarrow$ (v) provide an alterative way to prove  (ii) $\Longrightarrow$ (i) and (v) $\Longrightarrow$ (i).
\end{remark}

As an immediate consequence of Theorem \ref{t-bmo}, we have the following result.

\begin{corollary}
The following statements are equivalent:
\begin{enumerate}
\item[{\rm(i)}] $\sup_{x,y\in \cc}(1+|x-y|)^{-1}|g(x)-g(y)|<\fz$;
\item[{\rm(ii)}] $T_g(\mathrm{B}_0(\rn)) \subset \mathrm{B}(\rn)$;
\item[{\rm(iii)}]  $T_g(\mathrm{B}_0(Q_0)) \subset \mathrm{B}(Q_0)$.
\end{enumerate}
\end{corollary}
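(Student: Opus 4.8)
The plan is to read the corollary off Theorem~\ref{t-bmo} with no new work, using only the elementary facts recorded in Section~\ref{s1}, namely the sandwich inclusions $\mathrm{B}_c(\rn)\hookrightarrow\mathrm{B}_0(\rn)\subset\mathrm{B}(\rn)$ and $\mathrm{B}_c(Q_0)\hookrightarrow\mathrm{B}_0(Q_0)\subset\mathrm{B}(Q_0)$, together with the obvious monotonicity $X\subset Y\Rightarrow T_g(X)\subset T_g(Y)$ of the superposition operator with respect to set inclusion. Since statement (i) of the corollary is verbatim statement (i) of Theorem~\ref{t-bmo}, it suffices to squeeze statements (ii) and (iii) of the corollary between appropriate items of that theorem.

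First I would prove (i)$\Rightarrow$(ii) and (i)$\Rightarrow$(iii). Assuming (i), Theorem~\ref{t-bmo} yields $T_g(\mathrm{B}(\rn))\subset\mathrm{B}(\rn)$ and $T_g(\mathrm{B}(Q_0))\subset\mathrm{B}(Q_0)$ (its items (ii) and (iv)). Applying $T_g$ to the inclusions $\mathrm{B}_0(\rn)\subset\mathrm{B}(\rn)$ and $\mathrm{B}_0(Q_0)\subset\mathrm{B}(Q_0)$ gives $T_g(\mathrm{B}_0(\rn))\subset T_g(\mathrm{B}(\rn))\subset\mathrm{B}(\rn)$ and likewise on $Q_0$, which are precisely (ii) and (iii).

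Next I would prove the converses (ii)$\Rightarrow$(i) and (iii)$\Rightarrow$(i). If (ii) holds, then since $\mathrm{B}_c(\rn)\subset\mathrm{B}_0(\rn)$ we obtain $T_g(\mathrm{B}_c(\rn))\subset T_g(\mathrm{B}_0(\rn))\subset\mathrm{B}(\rn)$, which is exactly item (iii) of Theorem~\ref{t-bmo}; by the equivalence asserted there, (i) follows. The implication (iii)$\Rightarrow$(i) is identical, using $\mathrm{B}_c(Q_0)\subset\mathrm{B}_0(Q_0)$ and item (v) of Theorem~\ref{t-bmo}. This closes the cycle and proves the corollary.

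There is essentially no obstacle: the entire substance is already contained in Theorem~\ref{t-bmo}. The only point requiring (trivial) care is to invoke the correct chain $\mathrm{B}_c\subset\mathrm{B}_0\subset\mathrm{B}$ on \emph{each} of the two underlying domains $\rn$ and $Q_0$, so that $\mathrm{B}_0$ genuinely sits between the space on which the known boundedness of $T_g$ forces (i) (namely $\mathrm{B}_c$) and the space into which $T_g$ maps under (i) (namely $\mathrm{B}$). One may equivalently present the whole argument for the $\rn$ case as the single cycle $\text{(i)}\Rightarrow$ (Thm.~\ref{t-bmo}(ii))$\Rightarrow\text{(ii)}\Rightarrow$ (Thm.~\ref{t-bmo}(iii))$\Rightarrow\text{(i)}$, and analogously for $Q_0$ via items (iv) and (v) of Theorem~\ref{t-bmo}.
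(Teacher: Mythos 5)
Your argument is correct and is exactly the intended one: the paper states the corollary as an immediate consequence of Theorem \ref{t-bmo}, relying on the same sandwiching $\mathrm{B}_c\subset\mathrm{B}_0\subset\mathrm{B}$ on each domain that you spell out. Nothing further is needed.
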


\section{Proof of Theorem \ref{t-vmo}}

   We give the proof of Theorem \ref{t-vmo}
in this section. To this end,
we need the following well-known fact on the relation between uniformly continuous functions and modulus of continuity (see \cite[Chapter 2, Section 6]{dl}). Recall that a function $w: [0,\fz)\to[0,\fz)$  is called a \emph{modulus of continuity} of a function $g$ provided that
$$ \lim_{t\to0}w(t)=0 \quad \mbox{and}\quad |g(x)-g(y)|\le w(|x-y|),\quad\forall\ x,\,y\in\cc.$$

\begin{lemma}\label{uni}
If a function $g$  is uniformly continuous, then it has concave increasing  modulus of continuity.
\end{lemma}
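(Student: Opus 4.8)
The plan is to produce, from the uniform continuity of $g$, first a modulus of continuity, then improve it to an increasing one, and finally to a concave increasing one, all while preserving the defining property $\lim_{t\to 0^+}w(t)=0$.

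\textbf{Step 1: Build a modulus of continuity.} For each $t\in[0,\fz)$ set
$$\oz_0(t):=\sup\lf\{|g(x)-g(y)|:\ x,\,y\in\cc,\ |x-y|\le t\r\}.$$
Since $g$ is uniformly continuous, for every $\vez>0$ there is $\dz>0$ with $|g(x)-g(y)|\le\vez$ whenever $|x-y|\le\dz$; hence $\oz_0(t)\le\vez$ for all $t\le\dz$, which gives $\lim_{t\to0^+}\oz_0(t)=0$ (note $\oz_0(0)=0$ automatically). By construction $|g(x)-g(y)|\le\oz_0(|x-y|)$ for all $x,y$, and $\oz_0$ is nondecreasing (the supremum is over a larger set as $t$ increases). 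So $\oz_0$ is already an increasing modulus of continuity; the only thing missing is concavity.

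\textbf{Step 2: Take the concave envelope.} Define $w$ to be the least concave majorant of $\oz_0$ on $[0,\fz)$, that is,
$$w(t):=\inf\lf\{\az t+\bz:\ \az,\,\bz\in[0,\fz),\ \az s+\bz\ge\oz_0(s)\ \mbox{for all }s\in[0,\fz)\r\}.$$
Then $w\ge\oz_0\ge0$, so $|g(x)-g(y)|\le w(|x-y|)$ still holds; $w$ is concave as an infimum of affine functions; and $w$ is nondecreasing because $\oz_0$ is (any affine lower-supporting line can be replaced by one with nonnegative slope). The essential point to check is $\lim_{t\to0^+}w(t)=0$: given $\vez>0$, pick $\dz>0$ with $\oz_0(s)\le\vez$ for $s\le\dz$; since $\oz_0$ is bounded on $[0,\fz)$ (as $g$ is uniformly continuous, $\oz_0(t)\le\oz_0(1)\cdot(\lceil t\rceil)$ is at worst linearly bounded, in fact one checks $\oz_0$ is subadditive so $\oz_0(t)\le(1+t)\oz_0(1)$), the affine function $s\mapsto \vez+\frac{\oz_0(1)(1+1/\dz)}{1}\,s$ — more cleanly, $s\mapsto\vez+C_\vez s$ with $C_\vez$ chosen large enough that this line lies above $\oz_0$ on all of $[0,\fz)$ using the linear bound — majorizes $\oz_0$, whence $w(t)\le\vez+C_\vez t$, so $\limsup_{t\to0^+}w(t)\le\vez$. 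Letting $\vez\to0$ gives $\lim_{t\to0^+}w(t)=0$. Thus $w$ is a concave, increasing (nondecreasing) modulus of continuity for $g$, as claimed; one may cite \cite[Chapter 2, Section 6]{dl} for this standard construction.

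\textbf{Main obstacle.} The only genuinely delicate point is verifying that the concave envelope $w$ still vanishes at $0$; a general nondecreasing function need not have a finite concave majorant, and even when it does the envelope could fail to be continuous at $0$. The resolution is the linear (at most affine) growth bound on $\oz_0$, which comes from subadditivity of $\oz_0$ — itself a direct consequence of the triangle inequality $|g(x)-g(z)|\le|g(x)-g(y)|+|g(y)-g(z)|$. Once that growth bound is in hand, the supporting-line argument above closes the gap. Everything else (monotonicity and concavity of $w$, and the pointwise domination $|g(x)-g(y)|\le w(|x-y|)$) is immediate from the definitions.
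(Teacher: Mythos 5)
Your proof is correct. Note that the paper does not actually prove this lemma; it simply cites \cite[Chapter 2, Section 6]{dl}, and your construction (the sup-modulus $\oz_0$ followed by its least concave, nondecreasing affine-envelope majorant, with the supporting line $\vez+C_\vez s$ controlling the value near $0$) is precisely the standard argument behind that citation. Two small touch-ups: the parenthetical claim that $\oz_0$ is ``bounded on $[0,\fz)$'' should read ``of at most linear growth'' (which is what you in fact prove and use), and you should state explicitly that $\oz_0(1)<\fz$ — this is needed for the affine majorant family to be nonempty and follows from the same subadditivity, since $\oz_0(1)\le N\oz_0(1/N)\le N\vez$ once $1/N\le\dz$ with $\dz$ chosen from uniform continuity. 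With these remarks the argument is complete.
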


Observe that $\mathrm{B}_0(\rr)\subset \mathrm{B}_0(\rn)$ in the sense of \eqref{extend}.
Similarly to Theorem \ref{t-bmo}, we can prove
(ii) $\Longleftrightarrow$ (i) $\Longleftrightarrow$ (iv) of Theorem \ref{t-vmo}
in a rather simple way via reducing to the one-dimensional case.

\begin{proof}[Proof of  Theorem \ref{t-vmo}: (ii) $\Longleftrightarrow$ (i) $\Longleftrightarrow$ (iv)]
First we show (i) $\Longrightarrow$ (ii).
Let $g$ be a uniformly continuous function on $\cc$, and $w$   its related concave increasing
modulus of continuity, whose existence is due to Lemma \ref{uni}.
For any $f\in \mathrm{B}(\rn)$, we have
$$\sup_{|Q|=1}\fint_Q |g\circ f|\le \sup_{|Q|=1}\fint_Q w(|f(x)|)\,dx+|g(0)|\le w\lf(\sup_{|Q|=1}\fint_Q |f|\r)+|g(0)|<\fz.$$
For any $f\in \mathrm{B}(\rn)$ and any collection $\mathcal{F}_\ez$ of disjoint $\ez$-cubes $Q$ in $\rn$
with $\#\cf_\ez\le \ez^{1-n}$, by the Jensen inequality, we find that
\begin{align*}
\ez^{n-1}\sum_{Q\in\cf_\ez}\fint_Q|g\circ f-(g\circ f)_Q|
&\le\ez^{n-1}\sum_{Q\in\cf_\ez} \fint_Q \fint_Q|g\circ f(x)-g\circ f(y)|\,dx\,dy\\
&\le\ez^{n-1}\sum_{Q\in\cf_\ez} \fint_Q \fint_Q w(|f(x)-f(y)|)\,dx\,dy\\
&\le w\lf(\ez^{n-1}\sum_{Q\in\cf_\ez} \fint_Q \fint_Q| f(x)- f(y)|\,dx\,dy\r)\\
&\le w\lf(2\ez^{n-1}\sum_{Q\in\cf_\ez} \fint_Q | f- f_Q|\r).
\end{align*}
From this, it follows that
$$\lim_{\delta\to0}\sup_{\ez\in(0,\delta)}[g\circ f]_{\ez}\le \lim_{\delta\to0}w\lf(2\sup_{\ez\in(0,\delta)} [f]_\ez\r)=0,\quad \forall\;f\in \mathrm{B}_0(\rn).$$
This proves that $T_g(\mathrm{B}_0(\rn))\subset\mathrm{B}_0(\rn)$ and hence (i) $\Longrightarrow$ (ii).
The proof of (i) $\Longrightarrow$ (iv) is similar, and we omit its details.

Taking into account the facts that
$\mathop\mathrm{vmo}\,(\rr)=\mathrm{B}_0(\rr)\subset \mathrm{B}_0(\rn)$ and
$${\rm VMO}\,((0,1))=\mathrm{B}_0((0,1))\subset \mathrm{B}_0((0,1)^n)$$
in the sense of \eqref{extend} as well as
\cite[Theorem 2]{bls},
the implications (ii) $\Longrightarrow$ (i) and  (iv) $\Longrightarrow$ (i)
can be proved in the same way as those of (ii) $\Longrightarrow$ (i) and (iv) $\Longrightarrow$ (i) in
Theorem \ref{t-bmo}.

These prove  (ii) $\Longleftrightarrow$ (i) $\Longleftrightarrow$ (iv) of  Theorem \ref{t-vmo}.
\end{proof}

To show the other implications of
Theorem \ref{t-vmo}, we need the following two lemmas, which are inspired by \cite[Lemma 3]{bls}.

\begin{lemma}\label{l-unif1}
 If $T_g(\mathrm{B_c}(\rn))\subset \mathrm{B}_0(\rn)$ and $g(0)=0$, then, for any $\ez\in(0,1)$, there exist  a cube $P\subset Q_0$
and two positive constants $c_1$ and $c_2$ such that
$$\sup_{\delta\in(0,c_2]}\sup_{\mathcal{F}_\delta} \delta^{n-1} \sum_{Q\in \mathcal{F}_\delta}\fint_Q |g\circ f-(g\circ f)_Q|\le \ez$$
for any $f\in C_c^\fz(\rn)$ with $\supp f\subset P$ and $\|f\|_{\mathrm{B}(\rn)}\le c_1,$
where the second supremum is taken over all collections $\mathcal{F}_\delta$ of disjoint $\delta$-cubes with $\#\mathcal{F}_\delta \le \delta^{1-n}$.
\end{lemma}

\begin{proof}
We use the method of reduction to absurdity. Assume that there exists $\ez_0\in(0,1)$ such that, for any cube $P\subset Q_0$
and any pair $(c_1,c_2)$ of positive numbers, there exist a function $f\in C_c^\fz(\rn)$
supported in the cube $P$ and satisfying
$\|f\|_{\mathrm{B}(\rn)}\le c_1$, and a collection $\mathcal{F}_\delta$ of disjoint $\delta$-cubes with
$\delta\le c_2$ and $\#\mathcal{F}_\delta \le \delta^{1-n}$ such that
$$ \delta^{n-1} \sum_{Q\in \mathcal{F}_\delta}\fint_Q |g\circ f-(g\circ f)_Q|\ge \ez_0.$$
In particular, for any integer $j\ge 9$, associated to the cube
$$P_j:=\lf(0,\; 2^{-1}(1+j)^{-2}\r)^n +\frac{1}{j}(1,\ldots,1)\subset (2^{-j}, 1-2^{-j})^n\subset Q_0,$$
there exist   $f_j\in C_c^\fz(\rn)$ supported in the cube $\frac12P_j$ and satisfying $\|f_j\|_{\mathrm{B}(\rn)}\le 2^{-j}$,
as well as a collection $\mathcal{F}_{\delta_j}:=\{Q_{j,i}\}_i$ of disjoint $\delta_j$-cubes
with $\delta_j\le 2^{-j}$ and $\#\mathcal{F}_{\delta_j}\le \delta_j^{1-n}$, such that
$$ \delta_j^{n-1} \sum_{Q_{j,i}\in \mathcal{F}_{\delta_j}}\fint_{Q_{j,i}} |g\circ f_j-(g\circ f_j)_{Q_{j,i}}|\ge \ez_0.$$

Notice that $P_j\cap P_i=\emptyset$ whenever $i\neq j$ and $i,j\ge 9$.
Pick $\phi\in C_c^\fz(\rn)$
with $\supp\phi\subset Q_0$, $0\le \phi\le 1$ and $\phi\equiv 1$ on $\frac12Q_0$.
Define $\phi_j(x):= \phi(2(j+1)^2(x-c_{P_j}))$ for any $j\ge 9$ and $x\in\rn$, where $c_{P_j}:=\frac{1}{j}(1,\ldots,1)$ is the center of the cube $P_j$.
Then $\supp\phi_j \subset P_j$, $\supp\phi_j\equiv1$ on $\frac12P_j$ and
$$\|\nabla \phi_j\|_{L^\fz(\rn)}=2(j+1)^2\|\nabla\phi\|_{L^\fz(\rn)}.$$

Since $g(0)=0$ and $\supp f_j\subset \frac12P_j$, we may assume that $Q_{j,i}\cap P_j\neq \emptyset$ for any $Q_{j,i}\in \mathcal{F}_{\delta_j}$.
Such an assumption implies that those $Q_{j,i}$ are close to $P_j$. Meanwhile, notice that the side length of each $Q_{j,i}$ is far less than that of $P_j$.
Consequently, we find that each $Q_{j,i}\subset Q_0$ and that $Q_{j,i}\cap Q_{\ell,k}=\emptyset$ for any $i$
and $k$ whenever $j\neq \ell$ and $j,\ell\ge 9$.

Define $f:=\sum_{j=9}^\fz f_j$. Then $f\in C_c^\fz(\rn)\subset \mathrm{B}_c(\rn)$, and hence $g\circ f\in \mathrm{B}_0(\rn)$.
For any $j\ge 9$, by $g(0)=0$, $\supp f_j\subset \frac12 P_j$,  $\phi_j\equiv 1$ on $\frac 12P_j$  and $f(x)= f_j(x)$ for almost every $x\in P_j$,  we have
$$(g\circ f)\phi_j=g\circ f_j\qquad \textup{for almost every} \, x\in \rn.$$
Thus,
\begin{align}\label{est2}
\ez_0&\le\delta_j^{n-1} \sum_{Q_{j,i}\in \mathcal{F}_{\delta_j}}\fint_{Q_{j,i}} |g\circ f_j-(g\circ f_j)_{Q_{j,i}}|\\
&=\delta_j^{n-1} \sum_{Q_{j,i}\in \mathcal{F}_{\delta_j}}\fint_{Q_{j,i}} |(g\circ f)\phi_j-((g\circ f)\phi_j)_{Q_{j,i}}|\noz\\
&\le \delta_j^{n-1} \sum_{Q_{j,i}\in \mathcal{F}_{\delta_j}}\left[\|\phi_j\|_{L^\fz(\rn)}\fint_{Q_{j,i}}|g\circ f -(g\circ f)_{Q_{j,i}}|+\frac{\sqrt n}{2}\delta_j\|\nabla \phi_j\|_{L^\fz(\rn)}
\fint_{Q_{j,i}}|g\circ f|\r]\noz\\
&\le \delta_j^{n-1} \sum_{Q_{j,i}\in \mathcal{F}_{\delta_j}}\left[\|\phi_j\|_{L^\fz(\rn)}\fint_{Q_{j,i}}|g\circ f -(g\circ f)_{Q_{j,i}}|+\sqrt n\delta_j(j+1)^2\|\nabla \phi\|_{L^\fz(\rn)}
\fint_{Q_{j,i}}|g\circ f|\r]\noz\\
&\le \delta_j^{n-1} \sum_{Q_{j,i}\in \mathcal{F}_{\delta_j}} \fint_{Q_{j,i}}|g\circ f -(g\circ f)_{Q_{j,i}}|+\sqrt n \|\nabla \phi\|_{L^\fz(\rn)}\|g\circ f\|_{L^\fz(Q_0)} 2^{-j}(j+1)^2.\noz
\end{align}
Notice that $T_g(\mathrm{B}_c(\rn))\subset \mathrm{B}_0(\rn)$ implies that
$T_g(\mathrm{B}_c(\rn))\subset \mathrm{B}(\rn)$. Thus, by Theorem \ref{t-bmo} and Lemma \ref{eqthm1},
 $g$ can be written as the sum of a bounded Borel measurable function and a Lipschitz continuous function,
 both map a bounded set in $\cc$ into a bounded set. From this observation and the fact that $f\in C_c^\fz(Q_0)$,
 we deduce that $\|g\circ f\|_{L^\fz(Q_0)}$ is finite.
Then, by taking $j$ large enough in \eqref{est2}, we conclude that
\begin{align*}
\frac{\ez_0}2\le \delta_j^{n-1} \sum_{Q_{j,i}\in \mathcal{F}_{\delta_j}} \fint_{Q_{j,i}}|g\circ f -(g\circ f)_{Q_{j,i}}|,
\end{align*}
which contradicts to the fact  $g\circ f\in \mathrm{B}_0(\rn)$.
This finishes the proof of Lemma \ref{l-unif1}.
\end{proof}

An argument similar to that used in the proof of  Lemma \ref{l-unif1} gives its following counterpart,
which is also used in the proof of Theorem \ref{t-vmo}; we omit the details.

\begin{lemma}\label{l-unif2} If $T_g(\mathrm{B_c}(Q_0))\subset \mathrm{B}_0(Q_0)$ and $g(0)=0$, then, for any $\ez\in(0,1)$, there exist a cube $P\subset Q_0$
and two positive constants $c_1$ and $c_2$ such that
$$\sup_{\delta\in(0,c_2]}\sup_{\mathcal{F}_\delta} \delta^{n-1} \sum_{Q\in \mathcal{F}_\delta}\fint_Q |g\circ f-(g\circ f)_Q|\le \ez$$
for any $f\in C_c^\fz(Q_0)$ with $\supp f\subset P$, $\|f\|_{\mathrm{B}(Q_0)}\le c_1,$
where the second supremum is taken over all collections $\mathcal{F}_\delta$ of disjoint $\delta$-cubes in $Q_0$ with $\#\mathcal{F}_\delta \le \delta^{1-n}$.
\end{lemma}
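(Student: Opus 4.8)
The plan is to transplant the proof of Lemma \ref{l-unif1} to the cube $Q_0$, the only genuinely new features being that all the cubes which occur must be kept inside $Q_0$, and that the infinite sum of the test functions will have support accumulating at a corner of $Q_0$, so it lands in $\mathrm{B}_c(Q_0)$ rather than in $C_c^\fz(Q_0)$. Thus I would argue by contradiction: assume there is $\ez_0\in(0,1)$ such that for every cube $P\subset Q_0$ and every pair $(c_1,c_2)$ of positive numbers there exist $f\in C_c^\fz(Q_0)$ with $\supp f\subset P$ and $\|f\|_{\mathrm{B}(Q_0)}\le c_1$, together with a collection $\mathcal{F}_\delta$ of disjoint $\delta$-cubes in $Q_0$ with $\delta\le c_2$ and $\#\mathcal{F}_\delta\le\delta^{1-n}$, satisfying $\delta^{n-1}\sum_{Q\in\mathcal{F}_\delta}\fint_Q|g\circ f-(g\circ f)_Q|\ge\ez_0$. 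As in Lemma \ref{l-unif1} I would use the mutually disjoint cubes $P_j:=(0,2^{-1}(1+j)^{-2})^n+\tfrac1j(1,\ldots,1)$ for integers $j\ge 9$, which satisfy $P_j\subset(2^{-j},1-2^{-j})^n\subset Q_0$ and so lie at distance at least $\tfrac1j$ from $\partial Q_0$, and the associated cut-offs $\phi_j\in C_c^\fz(Q_0)$ with $\supp\phi_j\subset P_j$, $0\le\phi_j\le1$, $\phi_j\equiv1$ on $\tfrac12P_j$ and $\|\nabla\phi_j\|_{L^\fz(\rn)}\ls(j+1)^2$.

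Applying the contradiction hypothesis with $P=\tfrac12P_j$, $c_1=2^{-j}$ and $c_2=2^{-j}$ gives, for each $j$, a function $f_j\in C_c^\fz(Q_0)$ with $\supp f_j\subset\tfrac12P_j$ and $\|f_j\|_{\mathrm{B}(Q_0)}\le2^{-j}$, together with a family $\mathcal{F}_{\delta_j}=\{Q_{j,i}\}_i$ of disjoint $\delta_j$-cubes in $Q_0$ with $\delta_j\le2^{-j}$, $\#\mathcal{F}_{\delta_j}\le\delta_j^{1-n}$ and $\delta_j^{n-1}\sum_i\fint_{Q_{j,i}}|g\circ f_j-(g\circ f_j)_{Q_{j,i}}|\ge\ez_0$. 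Since $g(0)=0$ and $\supp f_j\subset\tfrac12P_j$, any $Q_{j,i}$ not meeting $P_j$ contributes $0$ and may be discarded; the surviving cubes lie within distance $\delta_j\le2^{-j}$ of $P_j$, hence (as $2^{-j}$ is far below $\tfrac1j$) are contained in $Q_0$, and for $j$ beyond some absolute constant they are disjoint from every $Q_{\ell,k}$ with $\ell\ne j$ (the gaps between consecutive $P_j$ are of order $\tfrac1{j(j+1)}\gg2^{-j}$). I would then set $f:=\sum_jf_j$ (sum over $j$ from some large index on). Because the supports of the $f_j$ shrink to the corner $0\in\partial Q_0$, $f$ need not lie in $C_c^\fz(Q_0)$; but its partial sums do, and they converge to $f$ in $\mathrm{B}(Q_0)$ since $\|f_j\|_{\mathrm{B}(Q_0)}\le2^{-j}$, so $f\in\mathrm{B}_c(Q_0)$. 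Hence the hypothesis $T_g(\mathrm{B}_c(Q_0))\subset\mathrm{B}_0(Q_0)$ yields $g\circ f\in\mathrm{B}_0(Q_0)$.

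From here the computation around \eqref{est2} runs as in Lemma \ref{l-unif1} with $Q_0$ in place of $\rn$: using $g(0)=0$, $\phi_j\equiv1$ on $\tfrac12P_j\supset\supp f_j$, and $f=f_j$ on $\tfrac12P_j$ while $f\equiv0$ off the union of the supports, one gets $(g\circ f)\phi_j=g\circ f_j$ almost everywhere, and then estimating $\fint_{Q_{j,i}}|(g\circ f)\phi_j-((g\circ f)\phi_j)_{Q_{j,i}}|$ by $\|\phi_j\|_{L^\fz}\fint_{Q_{j,i}}|g\circ f-(g\circ f)_{Q_{j,i}}|+\tfrac{\sqrt n}{2}\delta_j\|\nabla\phi_j\|_{L^\fz}\fint_{Q_{j,i}}|g\circ f|$ leads (since $\{Q_{j,i}\}_i$ is admissible for $[\cdot]_{\delta_j,Q_0}$) to
$$\ez_0\le[g\circ f]_{\delta_j,Q_0}+C(j+1)^2\|\nabla\phi\|_{L^\fz(\rn)}\sum_i\int_{Q_{j,i}}|g\circ f|.$$
To bound the error term I would invoke Theorem \ref{t-bmo} and Lemma \ref{eqthm1}: since $T_g(\mathrm{B}_c(Q_0))\subset\mathrm{B}_0(Q_0)$ forces $T_g(\mathrm{B}_c(Q_0))\subset\mathrm{B}(Q_0)$, the function $g$ is the sum of a bounded Borel function and a Lipschitz one, so $|g\circ f|\le C+{\rm Lip}(g)|f|$; combining this with $|\bigcup_iQ_{j,i}|\le\#\mathcal{F}_{\delta_j}\delta_j^{\,n}\le\delta_j$, with the fact that $f=f_j$ on $P_j$ and $f\equiv0$ elsewhere near $P_j$, and with $\int_{Q_0}|f_j|\le\|f_j\|_{\mathrm{B}(Q_0)}\le2^{-j}$, gives $\sum_i\int_{Q_{j,i}}|g\circ f|\le C\delta_j+{\rm Lip}(g)\,2^{-j}\le C'2^{-j}$. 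Hence the error term tends to $0$, so $[g\circ f]_{\delta_j,Q_0}\ge\ez_0/2$ for all large $j$; since $\delta_j\to0$, this contradicts $g\circ f\in\mathrm{B}_0(Q_0)$, and the lemma follows.

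I expect the only real obstacle — beyond bookkeeping that is parallel to Lemma \ref{l-unif1} — to be the two structural differences from the $\rn$ setting: first, checking that every auxiliary cube $Q_{j,i}$ produced by the contradiction hypothesis can be taken inside $Q_0$ and disjoint across $j$, which works precisely because $\delta_j\le2^{-j}$ decays far faster than the distances $\tfrac1j$ and $\tfrac1{j(j+1)}$ that control the geometry of the $P_j$; and second, recognizing that the candidate $f=\sum_jf_j$ lies in $\mathrm{B}_c(Q_0)$ (not in $C_c^\fz(Q_0)$), so that the hypothesis still applies. I would also note that, unlike in Lemma \ref{l-unif1}, I prefer to control the error term directly through the $L^1$-bound $\int_{Q_0}|f_j|\le2^{-j}$ rather than through $\|g\circ f\|_{L^\fz(Q_0)}$, which avoids having to argue that the infinite sum $f$ is bounded on $Q_0$. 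Once these points are in place, the rest is the argument of Lemma \ref{l-unif1} verbatim.
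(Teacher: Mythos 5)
Your proof is correct, and it is essentially the adaptation of the proof of Lemma \ref{l-unif1} that the paper has in mind when it omits the details of Lemma \ref{l-unif2}; the negation, the cubes $P_j$, the cut-offs $\phi_j$, the identity $(g\circ f)\phi_j=g\circ f_j$ coming from $g(0)=0$, the multiplier-type estimate on each $Q_{j,i}$, and the final contradiction with $g\circ f\in\mathrm{B}_0(Q_0)$ as $\delta_j\to0$ all match the template. Two of your deviations are worth recording, and both are refinements rather than gaps. First, you correctly place the test function $f=\sum_j f_j$ in $\mathrm{B}_c(Q_0)$ (as the $\mathrm{B}(Q_0)$-limit of its partial sums, which do lie in $C_c^\fz(Q_0)$) instead of claiming $f\in C_c^\fz$: since the contradiction hypothesis gives no control on $\|f_j\|_{L^\fz}$ and the supports accumulate at a corner, smoothness and even boundedness of $f$ are not available, so the paper's corresponding assertion ``$f\in C_c^\fz(\rn)$'' in the proof of Lemma \ref{l-unif1} is not justified as stated, whereas membership in $\mathrm{B}_c$ is all the hypothesis $T_g(\mathrm{B}_c(Q_0))\subset\mathrm{B}_0(Q_0)$ needs. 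Second, and consistently with this, your treatment of the error term via $|g\circ f|\le C+\mathrm{Lip}\,|f|$ (from Theorem \ref{t-bmo} and Lemma \ref{eqthm1}), $\sum_i|Q_{j,i}|\le\delta_j\le 2^{-j}$, and $\int_{Q_0}|f_j|\le\|f_j\|_{\mathrm{B}(Q_0)}\le 2^{-j}$ — using that $f=f_j$ a.e.\ on the retained $Q_{j,i}$ for large $j$, because the gaps between the $P_j$ are of order $j^{-2}\gg\sqrt{n}\,2^{-j}$ — avoids the paper's appeal to $\|g\circ f\|_{L^\fz(Q_0)}<\fz$, which again rests on the unjustified boundedness of the infinite sum; so your route is the safer one while yielding the same decay $(j+1)^2 2^{-j}\to0$ of the error term and hence the desired contradiction.
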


We now complete the proof of Theorem \ref{t-vmo}.

\begin{proof}[Proof of Theorem \ref{t-vmo}: (iii) $\Longleftrightarrow$ (i) $\Longleftrightarrow$ (v)]
Observe that the implications (ii) $\Longrightarrow$ (iii) and (iv) $\Longrightarrow$ (v) are trivial. Therefore,
we have (i) $\Longrightarrow$ (iii) and (i) $\Longrightarrow$ (v) due to the previous  proved implications
(ii) $\Longleftrightarrow$ (i) $\Longleftrightarrow$ (iv).

Now we show  (iii) $\Longrightarrow$ (i) and   (v) $\Longrightarrow$ (i).
Without loss of generality, we may assume $g(0)=0$, by possibly subtracting $g(0)$.
If $T_g(\mathrm{B}_c(\rn))\subset\mathrm{B}_0(\rn)$ ({resp.}, $T_g(\mathrm{B}_c(Q_0))\subset\mathrm{B}_0(Q_0)$),
then the uniformly  continuity of $g$ in (i) follows from
Lemmas \ref{l-he} and \ref{l-unif1} ({resp.},  \ref{l-unif2}).
This finishes the proof of Theorem \ref{t-vmo}.
\end{proof}

\begin{remark}
In Theorem \ref{t-vmo}, by the trivial facts (ii) $\Longrightarrow$ (iii) and (iv) $\Longrightarrow$ (v), the proofs of the implications (iii) $\Longleftrightarrow$ (i) $\Longleftrightarrow$ (v) provide an alterative way to prove  (ii) $\Longrightarrow$ (i) and (v) $\Longrightarrow$ (i).
\end{remark}

\section{Proof of Theorem \ref{t-cmo}}

  To show Theorem \ref{t-cmo},
we need Theorem \ref{t-vmo} and the following result on the continuity of $T_g$.

\begin{proposition}\label{p-bmo-c}
If $g$ is uniformly continuous, then
$T_g$ is  continuous at $f\in \mathrm{B}_0(\rn)$  (resp., $\mathrm{B}_0(Q_0)$) as
a map from $\mathrm{B}(\rn)$ (resp., $\mathrm{B}(Q_0)$) to itself. \end{proposition}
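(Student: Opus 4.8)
The plan is to prove continuity at a point $f\in\mathrm{B}_0(\rn)$ (the $\mathrm{B}_0(Q_0)$ case being identical) by a direct estimate: we take a sequence $f_k\to f$ in $\mathrm{B}(\rn)$ and show $\|g\circ f_k-g\circ f\|_{\mathrm{B}(\rn)}\to0$. Let $w$ be a concave increasing modulus of continuity for $g$, whose existence is guaranteed by Lemma \ref{uni}. The integral part of the norm is easy: since $|g\circ f_k(x)-g\circ f(x)|\le w(|f_k(x)-f(x)|)$ pointwise, Jensen's inequality gives $\fint_Q|g\circ f_k-g\circ f|\le w(\fint_Q|f_k-f|)\le w(\|f_k-f\|_{\mathrm{B}(\rn)})\to0$, uniformly over $1$-cubes $Q$. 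The real work is controlling $\sup_{0<\ez<1}[g\circ f_k-g\circ f]_\ez$.

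For the seminorm part, fix $\eta>0$ and split according to scale. First I would choose $\delta>0$ small so that $\sup_{0<\ez<\delta}[f]_\ez<\eta$; this is possible precisely because $f\in\mathrm{B}_0(\rn)$. On small cubes ($\ez<\delta$) one estimates, for any admissible collection $\cf_\ez$,
\begin{align*}
\ez^{n-1}\sum_{Q\in\cf_\ez}\fint_Q|(g\circ f_k-g\circ f)-(g\circ f_k-g\circ f)_Q|
&\le \ez^{n-1}\sum_{Q\in\cf_\ez}\fint_Q\fint_Q\bigl[w(|f_k(x)-f_k(y)|)+w(|f(x)-f(y)|)\bigr]dx\,dy\\
&\le w\Bigl(2\ez^{n-1}\sum_{Q\in\cf_\ez}\fint_Q|f_k-f_Q^k|\Bigr)+w\Bigl(2\ez^{n-1}\sum_{Q\in\cf_\ez}\fint_Q|f-f_Q|\Bigr),
\end{align*}
using concavity of $w$ in the form of Jensen's inequality applied to the probability measure $|\cf_\ez|^{-1}\ez^{n-1}\sum_Q\fint_Q\fint_Q$ together with $\#\cf_\ez\le\ez^{1-n}$. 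The second term is $\le w(2\eta)$; since $f_k\to f$ in $\mathrm{B}(\rn)$, for $k$ large the first term is also small, say $[f_k]_\ez\le[f]_\ez+\|f_k-f\|_{\mathrm{B}(\rn)}<2\eta$, giving a bound $\le 2w(4\eta)$ on small cubes, uniformly in $k$ large.

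On large cubes ($\delta\le\ez<1$) there are only boundedly many disjoint cubes in play ($\#\cf_\ez\le\delta^{1-n}$ and each has side $\ge\delta$), and here one cannot exploit oscillation smallness of $f$, so instead one uses that $g\circ f_k\to g\circ f$ in $L^1$ on each fixed cube. Precisely, $\ez^{n-1}\sum_{Q\in\cf_\ez}\fint_Q|(g\circ f_k-g\circ f)-(g\circ f_k-g\circ f)_Q|\le 2\,\delta^{n-1}\sum_{Q\in\cf_\ez}\fint_Q|g\circ f_k-g\circ f|\le 2\delta^{n-1}\sum_{Q\in\cf_\ez}w(\fint_Q|f_k-f|)$; but now Proposition \ref{prop-new}(i) (or Lemma \ref{l-eqv} together with $\#\cf_\ez\le\delta^{1-n}$) bounds the number of cubes and localizes, and since $w(\|f_k-f\|_{\mathrm{B}(\rn)})\to0$ this whole term tends to $0$ as $k\to\infty$ for the fixed $\delta$. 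Combining the two regimes: given any prescribed target, first pick $\delta$ to make the small-scale contribution at most $2w(4\eta)$ with $\eta$ as small as we like, then pick $k$ large to make the large-scale contribution small; since $w(t)\to0$ as $t\to0$, this shows $\sup_{0<\ez<1}[g\circ f_k-g\circ f]_\ez\to0$.

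The main obstacle is the interface between the two regimes: the small-scale bound $2w(4\eta)$ depends on the choice of $\delta$ only through requiring $\delta$ small, while the large-scale bound requires $k$ large \emph{after} $\delta$ is fixed, and one must be careful that the large-scale estimate genuinely uses only finitely many cubes at a fixed minimal scale so that $L^1$-type convergence (quantified through $w$ and Proposition \ref{prop-new}) applies. The use of $f\in\mathrm{B}_0$ is essential and enters only in the small-scale regime; it is exactly what fails for general $f\in\mathrm{B}(\rn)$, consistent with Theorem \ref{t-con2}. I would also note at the outset that replacing $g$ by $g-g(0)$ changes neither hypothesis nor conclusion, so no normalization issue arises, and that the $Q_0$ case uses Proposition \ref{prop-new}(ii) and Lemma \ref{l-eqv}(ii) in place of their $\rn$ analogues with no other change.
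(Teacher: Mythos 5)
Your proposal is correct and takes essentially the same route as the paper: the paper also splits the seminorm at a scale $\delta$ chosen from $f\in \mathrm{B}_0(\rn)$, uses a concave increasing modulus of continuity with Jensen's inequality on the small scales, the bound $2w\bigl(\fint_Q|h|\bigr)\le 2w\bigl(\delta^{-n}\|h\|_{\mathrm{B}(\rn)}\bigr)$ on the large scales, and the trivial estimate on unit cubes (the per-cube inequalities being packaged there as Lemma \ref{l-bls}, quoted from \cite{bls}); your small-scale bound $w(|f_k(x)-f_k(y)|)+w(|f(x)-f(y)|)$ combined with $[f_k]_\ez\le[f]_\ez+\|f_k-f\|_{\mathrm{B}(\rn)}$ is only a cosmetic variant of that lemma. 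One minor slip: in the regime $\ez\ge\delta$ the replacement of $\ez^{n-1}$ by $\delta^{n-1}$ goes the wrong way for $n>1$, and Proposition \ref{prop-new} is not needed there; the correct (and equivalent) chain is $\ez^{n-1}\#\mathcal{F}_\ez\le1$ together with $\fint_Q|f_k-f|\le\delta^{-n}\|f_k-f\|_{\mathrm{B}(\rn)}$, which gives the same vanishing bound $2w\bigl(\delta^{-n}\|f_k-f\|_{\mathrm{B}(\rn)}\bigr)$.
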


The proof of Proposition \ref{p-bmo-c} replies on the following conclusion from \cite[Lemma 4]{bls}.

\begin{lemma}\label{l-bls}
Assume that $g$ has a concave increasing modulus of continuity $w$ satisfying $w(t)\to 0$ as $t\to0$. Then,
for any locally integrable functions $f$ and $h$, and any cube $Q$,
\begin{align*}
&\fint_Q\left|g\circ(f+h)-g\circ f -(g\circ(f+h)-g\circ f)_Q\right|\\
&\quad\le \min\left\{2w\left(2\fint_Q|f-f_Q|\r)+w\left(2\fint_Q|h-h_Q|\r),\, 2w\left(\fint_Q|h|\r)\right\}.
\end{align*}
\end{lemma}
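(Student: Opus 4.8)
The plan is to prove the two entries of the minimum separately. I would write $\Phi:=g\circ(f+h)-g\circ f$; since $\Phi_Q=(g\circ(f+h))_Q-(g\circ f)_Q$, one has the identity
$$\Phi-\Phi_Q=\lf[g\circ(f+h)-(g\circ(f+h))_Q\r]-\lf[g\circ f-(g\circ f)_Q\r].$$
Two elementary facts will be used repeatedly: first, for any locally integrable $u$ and any cube $Q$, one has $\fint_Q|u-u_Q|\le\fint_Q\fint_Q|u(x)-u(y)|\,dx\,dy\le 2\fint_Q|u-u_Q|$; second, since $w$ is concave and increasing with $\lim_{t\to0}w(t)=0$, one gets $w(0)=0$, hence $w$ is subadditive ($w(s+t)\le w(s)+w(t)$), and Jensen's inequality for the concave $w$ gives $\fint_Q w(\psi)\le w(\fint_Q\psi)$ for non-negative $\psi$.

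For the first entry I would start from the displayed identity and the triangle inequality, getting $\fint_Q|\Phi-\Phi_Q|\le\fint_Q|g\circ(f+h)-(g\circ(f+h))_Q|+\fint_Q|g\circ f-(g\circ f)_Q|$. To each summand apply the defining inequality $|g(a)-g(b)|\le w(|a-b|)$ (with $a=u(x)$, $b=u(y)$ for $u=f+h$ and $u=f$ respectively), then Jensen, then the comparison of mean oscillation with the double average; this bounds the $f$-term by $w(2\fint_Q|f-f_Q|)$ and the $(f+h)$-term by $w(2\fint_Q|(f+h)-(f+h)_Q|)$. Then the triangle inequality $\fint_Q|(f+h)-(f+h)_Q|\le\fint_Q|f-f_Q|+\fint_Q|h-h_Q|$ together with monotonicity and subadditivity of $w$ splits the latter into $w(2\fint_Q|f-f_Q|)+w(2\fint_Q|h-h_Q|)$, and adding produces the first entry $2w(2\fint_Q|f-f_Q|)+w(2\fint_Q|h-h_Q|)$.

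For the second entry I would simply not optimize the subtracted constant: $\fint_Q|\Phi-\Phi_Q|\le\fint_Q|\Phi|+|\Phi_Q|\le2\fint_Q|\Phi|$, and pointwise $|\Phi(x)|=|g(f(x)+h(x))-g(f(x))|\le w(|h(x)|)$, so Jensen gives $2\fint_Q|\Phi|\le2\fint_Q w(|h|)\le2w(\fint_Q|h|)$. Taking the minimum of the two bounds finishes the proof. I do not expect a genuine obstacle here: the entire argument is bookkeeping around Jensen's inequality, subadditivity of a concave modulus, and the standard two-sided comparison between $\fint_Q|u-u_Q|$ and the double average of $|u(x)-u(y)|$; the only points meriting a line of care are the derivation $w(0)=0$ (needed for subadditivity) and tracking the constants so the outcome matches $2w(\cdot)+w(\cdot)$ and $2w(\cdot)$ exactly. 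Alternatively, this is \cite[Lemma~4]{bls} and one may simply invoke it.
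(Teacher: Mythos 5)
Your proof is correct. Note that the paper itself gives no proof of this lemma: it is quoted directly from \cite[Lemma 4]{bls}, so your self-contained argument (Jensen's inequality for the concave $w$, subadditivity of $w$, and the two-sided comparison of $\fint_Q|u-u_Q|$ with the double average $\fint_Q\fint_Q|u(x)-u(y)|\,dx\,dy$) simply supplies the standard proof behind that citation, and the constants come out exactly as stated. One cosmetic remark: for subadditivity of a concave increasing $w$ you only need $w(0)\ge 0$, which is automatic; in any case $w(0)=0$ does follow from monotonicity and $w(t)\to 0$ as $t\to 0$, so your reasoning stands.
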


By Lemma \ref{l-bls}, the proof of Proposition \ref{p-bmo-c} is similar to that of \cite[Proposition 2]{bls}, and we give some details here for completeness.

\begin{proof}[Proof of Proposition \ref{p-bmo-c}]
Due to similarity, we only consider the case when  $f\in \mathrm{B}_0(\rn)$.
Let $\delta\in(0,1)$ and $w$ be a related concave increasing modulus of continuity of $g$. Define
$$M_\delta:=\sup_{\ez\in(0,\delta)}[f]_\ez.$$ Then, for any $\varepsilon>0$,
there exists a $\delta\in(0,1/2)$ such that $w(2M_{\delta})<\varepsilon$, due to $f\in \mathrm{B}_0(\rn)$
and $\lim_{t\to0}w(t)=0$. Using $\lim_{t\to0}w(t)=0$ again, we can take $\eta>0$ such that $w(\eta/\delta^n)<\varepsilon$.

Assume now $h\in \mathrm{B}(\rn)$ satisfying that  $\|h\|_{ \mathrm{B}(\rn)}\le\eta$.
Then,  for any collection $\cf_\ez$ of disjoint $\ez$-cubes,  by Lemma \ref{l-bls} and
the Jensen inequality, we find that, when $\ez\in(0, \delta]$,
\begin{align*}
I_\ez:=&\,\ez^{n-1}\sum_{Q\in \cf_\ez}\fint_Q\left|g\circ(f+h)-g\circ f -(g\circ(f+h)-g\circ f)_Q\right|\\
\le&\,  2w\left(2\ez^{n-1}\sum_{Q\in \cf_\ez}\fint_Q|f-f_Q|\r)+w\left(2\ez^{n-1}\sum_{Q\in \cf_\ez}\fint_Q|h-h_Q|\r)\\
\le&\, 2w(2M_\delta)+w(2\|h\|_{\mathrm{B}(\rn)})<2\varepsilon+w(2\eta)<2\varepsilon+w(\eta/\delta^n)<3\varepsilon,
\end{align*}
while, when  $\ez\in(\delta,1)$,
\begin{align*}
I_\ez& \le 2\ez^{n-1}\sum_{Q\in \cf_\ez} w\left(\fint_Q|h|\r)
  \le  2\ez^{n-1}\sum_{Q\in \cf_\ez} w\left(\delta^{-n}\|h\|_{\mathrm{B}(\rn)}\r)<2\varepsilon.
\end{align*}
Furthermore, for any cube $Q$ with $|Q|=1$, by Lemma \ref{uni} and the Jensen inequality, we have
$$\fint_Q\left|g\circ(f+h)-g\circ f \right|
\le \fint_Q w(|h|)\le w\left(\fint_Q |h |\r)\le w(\|h\|_{\mathrm{B}(\rn)})\le w(\eta)<\varepsilon.$$
Altogether, we conclude that $\|T_g(f+h)-T_gf\|_{\mathrm{B}(\rn)}\to 0$ as $\|h\|_{\mathrm{B}(\rn)}\to 0$, as desired.
This finishes the proof of Proposition \ref{p-bmo-c}.
\end{proof}

Now we use Proposition \ref{p-bmo-c} to show Theorem \ref{t-cmo}.

\begin{proof}[Proof of Theorem \ref{t-cmo}]
Let us first prove (a).  On the one hand, if $T_g(\mathrm{B}_c(\rn))\subset \mathrm{B}_c(\rn)$, then $T_g(\mathrm{B}_c(\rn))\subset \mathrm{B}_0(\rn)$
and  hence $g$ is uniformly continuous in terms of Theorem \ref{t-vmo}.
On the other hand,
$T_g(\mathrm{B}_c(\rn))\subset \mathrm{B}_c(\rn)$ also implies that $g(0)=T_g(0)\in \mathrm{B}_c(\rn)$.
Notice that a constant function in $\mathrm{B}_c(\rn)$ must be zero. Thus, we have $g(0)=0$.

Conversely, we assume that $g$ is uniformly continuous and $g(0)=0$. By Theorem \ref{t-vmo} and Proposition \ref{p-bmo-c}, we know that $T_g$ is continuous from $\mathrm{B}_c(\rn)$ to $\mathrm{B}_0(\rn)$.
Moreover, when $f\in C_c^\fz(\rn)$, the condition $g(0)=0$ ensures that $g\circ f$ is a continuous function with compact support, and hence it is
a uniform limit of a sequence of functions in $C_c^\fz(\rn)$. This implies that $g\circ f\in \mathrm{B}_c(\rn)$
whenever $f\in C_c^\fz(\rn)$. From these observations, we deduce that $T_g(\mathrm{B}_c(\rn))\subset \mathrm{B}_c(\rn)$.

The proof of (b) is almost the same as that of (a); the only difference is that we need to show that any constant function $C$
belongs to the space $\mathrm{B}_c(Q_0)$. It suffices to prove that, for any $\varepsilon\in(0,1)$,
there exists a $\phi\in C_c^\fz(Q_0)$ such that
$\|C-\phi\|_{L^1(Q_0)}<\varepsilon.$ To see this, without loss of generality, we may assume that $C>0$. Pick $\delta>0$
such that $1-(1-2\delta)^n<\frac{\varepsilon}{2C}.$ Then we choose a smooth function $\phi$ such that $\supp\phi\subset (1-\delta)Q_0$,
$0\le \phi\le C$, $\phi\equiv C$ on $(1-2\delta)Q_0$, and it is easy to see
that
$$\|C-\phi\|_{L^1(Q_0)}=\|C-\phi\|_{L^1(Q_0\setminus (1-\delta)Q_0)}\le 2C [1-(1-2\delta)^n]<\varepsilon.$$
This finishes the proof of Theorem \ref{t-cmo}.
\end{proof}

\section{Proof of Theorem \ref{t-con2}}

\begin{proof}[Proof of Theorem \ref{t-con2}]
The only non-trivial part is to show ``the continuity of
$T_g$ on $\mathrm{B}(\rn)$ $\Longrightarrow$ $g$ is $\rr$-affine". However, it is also not complicated
since we can argue as in the proof of (ii) $\Longrightarrow$ (i) of Theorem \ref{t-bmo}, namely, to reduce it
to the one-dimensional case.

Indeed, assume that $T_g$ is continuous on $\mathrm{B}(\rn)$, that is, for any $f\in \mathrm{B}(\rn)$
and $\ez>0$, there exists a $\eta>0$ such that
$$\lf\|g\circ(f+h)-g\circ f\r\|_{\mathrm{B}(\rn)}<\ez$$
for any $h\in \mathrm{B}(\rn)$ with $\|h\|_{\mathrm{B}(\rn)}<\eta$.

Now, let  $f_0,\ h_0\in \mathrm{B}(\rr)=\mathop\mathrm{bmo}\,(\rr)$ and $\|h_0\|_{\mathrm{B}(\rr)}<\eta$.
Define $f$ and $h$, respectively, via $f_0$ and $h_0$ in the same way as \eqref{extend}.
Then $f,\,h\in \mathrm{B}(\rn)$ with $\|h\|_{\mathrm{B}(\rn)}\le \|h_0\|_{\mathrm{B}(\rr)}<\eta$,
and
\begin{align*}
\|g\circ(f_0+h_0)-g\circ f_0\|_{\mathop\mathrm{bmo}\,(\rr)}
=\|g\circ(f_0+h_0)-g\circ f_0\|_{{\mathrm{B}}(\rr)}=\|g\circ(f+ h)-g\circ f\|_{{\mathrm{B}}(\rn)}<\ez.
\end{align*}
This means that $T_g$ is continuous on $\mathop\mathrm{bmo}\,(\rr)$. Applying \cite[Theorem 4]{bls}, we know that
$g$ is $\rr$-affine. This proves the desired conclusion of Theorem \ref{t-con2}.
\end{proof}

\bigskip

\noindent {Liguang Liu}

\smallskip

\noindent School of Mathematics, Renmin University
of China, Beijing 100872,  People's Republic of China

\noindent {\it E-mail}: \texttt{liuliguang@ruc.edu.cn}

\bigskip

\noindent Dachun Yang and Wen Yuan (Corresponding author)

\smallskip

\noindent Laboratory of Mathematics and Complex Systems (Ministry of
Education),  School of Mathematical Sciences, Beijing Normal University,
Beijing 100875, People's Republic of China

\smallskip

\noindent{\it E-mails}: \texttt{dcyang@bnu.edu.cn} (D. Yang);

\noindent\phantom{{\it E-mails:}} \texttt{wenyuan@bnu.edu.cn} (W. Yuan)

\end{document}